\newtheorem{teo}{Theorem}[section]
\newtheorem{lema}{Lemma}[section]
\newtheorem{cor}{Corollary}[section]
\newtheorem{propo}{Proposition}[section]
\theoremstyle{definition}
\newtheorem{defi}{Definition}[section]
\newtheorem{rek}{Remark}[section]
\newcommand{\supp}{\operatorname{supp}}
\newcommand{\essinf}{\operatorname{ess\,inf}}
\newcommand{\esssup}{\operatorname{ess\,sup}}
\newcommand{\diam}{\operatorname{diam}}
\newcommand{\ve}{\varepsilon}
\newcommand{\N}{\mathbb{N}}
\newcommand{\R}{\mathbb{R}}
\newcommand{\Z}{\mathbb{Z}}
\newcommand{\M}{\mathcal{M}}
\newenvironment{proof1}[1][\textit{\textbf{Proof \em(Theorem~\ref{central1})}}]{\textit{#1.} }{\hfill $\Box$}
\newenvironment{proof3}[1][\textit{\textbf{Proof \em(Theorem~\ref{GExpansive})}}]{\textit{#1.} }{\hfill $\Box$}
\newenvironment{proof4}[1][\textit{\textbf{Proof \em(Theorem~\ref{entrinf1})}}]{\textit{#1.} }{\hfill $\Box$}
\newenvironment{proof5}[1][\textit{\textbf{Proof \em(Theorem~\ref{entrinf2})}}]{\textit{#1.} }{\hfill $\Box$}
\begin{document}

\bigskip

\title {On the generic behavior of the metric entropy, and related quantities, of uniformly continuous maps over Polish metric spaces}
\date{}
\author{Silas L. Carvalho \thanks{Work partially supported by FAPEMIG (a Brazilian government agency; Universal Project 001/17/CEX-APQ-00352-17)} ~~and~ Alexander Condori \thanks{ Work  partially  supported  by  CIENCIACTIVA C.G. 176-2015}
}
\maketitle

{ \small \noindent $^{*}\,$Instituto de Ciências Exatas (ICEX-UFMG). Av. Pres. Antônio Carlos 6627, Belo Horizonte-MG, 31270-901, Brasil. \\ {\em e-mail:
silas@mat.ufmg.br 

\em
{\small \noindent $^{\dag\,}$Departamento de Matemática y Física (Universidad Nacional de San Cristóbal de Huamanga-UNSCH). Portal Independencia 57–Huamanga–Ayacucho.
Ayacucho 05003,
Per\'u. \\ {\em e-mail:
alexander.condori@unsch.edu.pe 
\em
\normalsize
\maketitle
\begin{abstract}
\noindent{
  In this work, we show that if $f$ is a uniformly continuous map defined over a Polish metric space, then the set of $f$-invariant measures with zero metric entropy is a $G_\delta$ set (in the weak topology). In particular, this set is generic if the set of $f$-periodic measures is dense in the set of $f$-invariant measures. This settles a conjecture posed by Sigmund in~\cite{Sigmund1974}, which states that the metric entropy of an invariant measure of a topological dynamical system that satisfies the periodic specification property is typically zero.

  We also show that if $X$ is compact and if $f$ is an expansive or a Lipschitz map with a dense set of periodic measures, typically the lower correlation entropy for $q\in(0,1)$ is equal to zero. Moreover, we show that if $X$ is a compact metric space and if $f$ is an expanding map with a dense set of periodic measures, then the set of invariant measures with packing dimension, upper rate of recurrence and  upper quantitative waiting time indicator equal to zero is residual.

Finally, we present an alternative proof of the fact that the set of expansive measures is a $G_{\delta\sigma}$ set in the set of probability measures $\M(X)$, if $X$ is a Polish metric space and if $f$ is uniformly continuous (this result was originally proved by Lee, Morales and Shin in \cite{Morales2018} for compact metric spaces). 
}
\noindent
\end{abstract}
{Key words and phrases}.  {\em Metric entropy, invariant measures,  expansive measures.}

\section{Introduction}

\subsection{Metric entropy of invariant measures}
Given a dynamical system $(X,f)$, where $X$ is a measurable space and $f:X\to X$ is a measurable function, the metric entropy (or just entropy) of an $f$-invariant measure (supposing that it exists) is related to the asymptotic growth rate of the loss of information with respect to the time evolution (that is, as one computes successive iteractions of $f$).

Namely, if $\mu$ is an $f$-invariant measure, then
\[h_{\mu}(f):=\sup h_{\mu} (f, \mathcal{Q}) =\sup \lim_{n \to \infty} \frac{1}{n} H(\mathcal{Q}\vee f^{-1}\mathcal{Q}\vee\cdots\vee f^{-n+1}\mathcal{Q}),
  \]
  where the supremum is taken over all the measurable partitions $\mathcal{Q}$ of $X$ such that $H(Q)<\infty$.

  So, in case $h_{\mu}(f)>0$, the degree of disorder (loss of information) that the transformation $f$ causes to some partition of $X$ grows exponentially with time. There is a large number of dynamical systems for which the set of invariant measures with positive metric entropy is dense (see, for example, \cite{Abdenur,Sigmund1971, Sigmund1972}). 

However, for numerous examples of dynamical systems, not only the set of invariant measures with zero metric entropy is dense, but it is also a $G_\delta$ set (being, therefore, generic;  
see \cite{Abdenur,Oprocha,Sigmund1970,Sigmund1971,Sigmund1974}, where this result is obtained through different methods and assumptions). Observe that if $h_\mu(f)=0$, then for each $\mathcal{Q}$ such that $H(Q)<\infty$, there exists an $N\in\N$ such that for each $n\ge N$, the dynamical partition $\mathcal{Q}_n:=\mathcal{Q}\vee f^{-1}\mathcal{Q}\vee\cdots\vee f^{-n+1}\mathcal{Q}$ is equal (up to an exponential grow) to $\mathcal{Q}_N$; that is, the degree of disorder caused by $f$ over the space $X$ is small (or equivalently, the initial partition $\mathcal{Q}$ does not undergo to many sub-atomizations by the successive application of $f$).

Thus, it is somewhat surprising that even for some dynamical systems which are very sensitive to the initial conditions (``chaotic'', in some sense; this is the case of expanding topologically mixing maps on intervals~\cite{Blokh1995,Gelfert,Oprocha,Sigmund1971, Sigmund1974}), there is a dense set (in the weak topology; see the discussion ahead) of invariant measures with zero metric entropy.  

Motivated by such results, we investigate in this work the typical behavior (in Baire's sense) of the entropy function when $X$ is a Polish metric space and $f:X\to X$ is a (uniformly) continuous function. 

Let $\mathcal{M}(X)$ be the space of all probability measures defined on $X$, endowed with the weak topology (that is, the coarsest topology for which the net $\{\mu_\alpha\}$ converges to $\mu$ if, and only if, for each bounded and continuous function $\varphi$, $\int \varphi d\mu_\alpha\rightarrow \int \varphi d\mu$). Since $X$ is Polish, $\mathcal{M}(X)$ is also a Polish metrizable space (see~\cite{Sigmundlibro}). We also denote by $\M(f)$ the set of $f$-invariant probability measures, by $\M_e(f)$ the set of $f$-ergodic measures and by $\M_p(f)$ the set of $f$-periodic measures, that is, the $f$-invariant measures which are supported on $f$-periodic (or $f$-closed) orbits; they are all endowed with the (metrizable) topology induced from $\M(X)$.

Our first result establishes that if $X$ is a Polish metric space and if $f:X\rightarrow X$ is a uniformly continuous function, then set of ergodic measures with zero metric entropy is a $G_\delta$ subset of $\M_e(f)$.

\begin{teo}
\label{central1}
Let $X$ be a  Polish space and let $f:X\to X$ be a uniformly continuous function. Then, the set
\[\mathcal{E}_0^e(f)=\{\mu\in \M_e(f)\mid h_\mu(f)=0\}\]
is a $G_\delta$ subset of $\M_e(f)$.
\end{teo}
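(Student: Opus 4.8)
The plan is to bypass the defining supremum over finite partitions, whose atoms are poorly behaved under the weak topology on a non-compact space, and instead to characterize zero entropy through a Katok-type covering formula; this turns the assertion into a routine count of Borel complexity. For $\mu\in\M(X)$, $n\in\N$ and $\epsilon>0$, let $N_\mu(n,\epsilon,\delta)$ denote the least cardinality of a finite set $E\subseteq X$ with $\mu\big(\bigcup_{x\in E}B_n(x,\epsilon)\big)>1-\delta$, where $B_n(x,\epsilon)=\bigcap_{i=0}^{n-1}f^{-i}\big(B(f^ix,\epsilon)\big)$ is the open $(n,\epsilon)$-dynamical ball (open because $f$ is continuous). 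Since every Borel probability on a Polish space is tight, for $\mu\in\M(f)$ one has $N_\mu(n,\epsilon,\delta)<\infty$: if $K$ is compact with $\mu(X\setminus K)<\delta/2n$, then $\bigcap_{i<n}f^{-i}K$ has $\mu$-measure $>1-\delta$ and is totally bounded for the Bowen metric $d_n(x,y)=\max_{0\le i<n}d(f^ix,f^iy)$, so it admits a finite $(n,\epsilon)$-spanning set. The first ingredient I would use is the Katok-type identity
\[
h_\mu(f)\;=\;\lim_{\epsilon\to0^+}\ \liminf_{n\to\infty}\ \frac1n\log N_\mu(n,\epsilon,\delta),\qquad \mu\in\M_e(f),
\]
which holds (and is independent of the fixed $\delta\in(0,1)$) in the present uniformly continuous Polish setting; I would either cite an existing non-compact version or re-run Katok's argument, the point being that tightness of $\mu$ plays the role of compactness of $X$, while uniform continuity of $f$ is what lets one compare dynamical balls across scales uniformly over $X$.

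Granting this, I would then observe that $\epsilon\mapsto\liminf_n\frac1n\log N_\mu(n,\epsilon,\delta)$ is non-increasing and non-negative, so the limit as $\epsilon\to0^+$ is its supremum over $\epsilon$, equivalently its supremum over $\epsilon=1/m$, $m\in\N$; hence
\[
h_\mu(f)=0\quad\Longleftrightarrow\quad \liminf_{n\to\infty}\ \frac1n\log N_\mu\big(n,\tfrac1m,\delta\big)=0\ \text{ for every }m\in\N .
\]
It is important that this is the \emph{liminf} form of Katok's formula: a condition $\liminf_n a_n=0$ with $a_n\ge0$ unfolds as $\bigcap_k\bigcap_\ell\bigcap_j\bigcup_{n\ge\ell}\{a_n<\tfrac1k+\tfrac1j\}$, so it becomes $G_\delta$ once the sets $\{a_n<c\}$ are open, whereas the limsup form would produce a union over $\ell$ of $G_\delta$ sets and no $G_\delta$ conclusion.

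The last ingredient is purely topological: for fixed $n$, $\epsilon$ and $K\in\N$,
\[
\{\mu\in\M(X): N_\mu(n,\epsilon,\delta)\le K\}=\bigcup_{x_1,\dots,x_K\in X}\Big\{\mu:\ \mu\Big(\textstyle\bigcup_{i=1}^K B_n(x_i,\epsilon)\Big)>1-\delta\Big\},
\]
and each $\bigcup_{i}B_n(x_i,\epsilon)$ is open, so $\mu\mapsto\mu\big(\bigcup_i B_n(x_i,\epsilon)\big)$ is lower semicontinuous for the weak topology; thus every set in the union is open, whence so is the union. Since $\{\tfrac1n\log N_\mu(n,\tfrac1m,\delta)<c\}$ equals $\{N_\mu(n,\tfrac1m,\delta)\le K\}$ for the integer $K=\lceil e^{cn}\rceil-1$ and is therefore open, the set $\{\mu\in\M(X):\liminf_n\tfrac1n\log N_\mu(n,\tfrac1m,\delta)=0\}$ is of the form $\bigcap_k\bigcap_\ell\bigcap_j\bigcup_{n\ge\ell}(\text{open})$, hence $G_\delta$ in $\M(X)$; intersecting with $\M_e(f)$ and then over $m\in\N$ displays
\[
\mathcal{E}_0^e(f)=\bigcap_{m\in\N}\Big(\M_e(f)\cap\big\{\mu:\ \liminf_{n}\tfrac1n\log N_\mu(n,\tfrac1m,\delta)=0\big\}\Big)
\]
as a countable intersection of $G_\delta$ subsets of $\M_e(f)$, hence a $G_\delta$ subset of $\M_e(f)$.

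The step I expect to be the real obstacle is the Katok-type formula on a non-compact Polish space. The inequality $h_\mu(f)\ge\lim_{\epsilon\to0}\liminf_n\frac1n\log N_\mu(n,\epsilon,\delta)$ and the finiteness of $N_\mu$ follow easily from tightness, but the reverse inequality in Katok's proof relies on a generating partition with atoms of small diameter, which cannot be a finite partition when $X$ is non-compact. One must instead work with a finite partition that is fine on a large-measure compact set and has a single coarse atom off it, and then verify that the Shannon–McMillan–Breiman estimate is not spoiled by the resulting error terms — and this is precisely where uniform continuity of $f$ enters. Everything downstream of the formula is soft and essentially formal.
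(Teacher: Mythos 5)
Your descriptive-set-theoretic half is correct, and in fact cleaner than the paper's: since $B_n(x,\epsilon)$ is open (continuity of $f$ suffices here), $\mu\mapsto\mu\bigl(\bigcup_{i\le K}B_n(x_i,\epsilon)\bigr)$ is lower semicontinuous, so $\{\mu\mid N_\mu(n,\epsilon,\delta)\le K\}$ is a union of open sets, hence open, and the liminf-unfolding then gives a $G_\delta$ set; the paper instead works with the pointwise quantity $\underline{\gamma}_\mu^{\ve}(x,s)$, continuous bump approximations of $\mu(B_n(x,\ve))$ and a tightness/compactness argument (Lemma~\ref{asympt1}, Lemma~\ref{Gdelta3}) because its entropy characterization is local ($\mu$-essential bounds on $\underline{h}_\mu(f,x)$) rather than a global covering number. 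The gap is that your analytic core --- the Katok-type identity $h_\mu(f)=\lim_{\epsilon\to0}\liminf_n\frac1n\log N_\mu(n,\epsilon,\delta)$ for ergodic measures of a uniformly continuous map on a non-compact Polish space --- is exactly the hard content and is neither proved nor available off the shelf in this generality. The paper's substitute for it (Theorem~\ref{BrinKatok}, Corollary~\ref{BrinKatokC}) is built from a countable-partition/Fatou argument for one inequality and an imported non-compact Brin--Katok-type theorem (Theorem~2.9 of \cite{Riquelme}) for the other; something of the same depth is unavoidable for your formula.

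Moreover, your sketch of how you would prove the formula misallocates the difficulty. The inequality you call ``easy from tightness,'' namely $h_\mu(f)\ge\lim_{\epsilon\to0}\liminf_n\frac1n\log N_\mu(n,\epsilon,\delta)$, is the one proved in Katok's argument by taking a partition $\xi$ with $\diam(\xi)<\epsilon$, so that every atom of $\xi^n$ sits inside a dynamical $\epsilon$-ball, and then invoking Shannon--McMillan--Breiman to cover measure $1-\delta$ by about $e^{n(h+\gamma)}$ such atoms; tightness gives only finiteness of $N_\mu$, not this exponential bound, and on a non-compact space no finite partition has small diameter, while a countable one must be shown to have finite entropy before SMB applies (this is precisely the delicate point the paper's proof of item~2 of Theorem~\ref{BrinKatok} is navigating). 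Conversely, your proposed fix for the other inequality --- a partition fine on a large compact set with one coarse atom outside --- does not straightforwardly work where fine atoms are needed: a single visit of the itinerary to the coarse atom destroys the inclusion of a $\xi^n$-atom in a dynamical ball, and for $\mu(X\setminus K)>0$ a typical orbit visits it with positive frequency, so one needs the genuinely combinatorial ``few bad times'' estimate (or an Egorov-plus-recentering argument resting on the hard half of Brin--Katok) rather than a routine error-term check. Until the non-compact Katok formula is actually established, the proof is incomplete; if you establish it by the same two ingredients the paper uses, the two approaches essentially coincide, with your covering-number formulation buying a simpler openness argument at the $\M(X)$ level.
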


Under the hypothesis that $f:X\rightarrow X$ is a continuous function, it follows from (a small adaptation of) Theorem~2.1 in~\cite{Parthasarathy1961} that $\M_e(f)$ is a $G_\delta$ subset of $\M(f)$. Thus, combining this remark with Theorem~\ref{central1},  one has the following result.

\begin{cor}
\label{corcentral1}
Let $X$ be a  Polish space and let $f:X\to X$ be a uniformly continuous function. Then, the set
\[\mathcal{E}_0(f)=\{\mu\in \M(f)\mid h_\mu(f)=0\}\]
is a $G_\delta$ subset of $\M(f)$.
\end{cor}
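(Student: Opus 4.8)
My plan is to prove the corollary directly, via a version of Katok's entropy formula together with how the quantity in it transforms under the ergodic decomposition --- essentially the circle of ideas behind Theorem~\ref{central1}, carried now to non‑ergodic measures as well. Fix $\delta\in(0,1)$; for $\mu\in\M(f)$, $n\in\N$, $\varepsilon>0$ let $N_\mu(n,\varepsilon,\delta)$ be the least cardinality of a family of \emph{open} $(n,\varepsilon)$‑dynamical balls whose union has $\mu$‑measure strictly larger than $1-\delta$ (such families exist because $\mu$ is tight, and the dynamical balls behave well since $f$ is uniformly continuous). The key topological point is that $\mu\mapsto N_\mu(n,\varepsilon,\delta)$ is upper semicontinuous on $\M(X)$: if a union $U$ of $r$ such balls witnesses $N_\mu(n,\varepsilon,\delta)\le r$, then $\mu(U)>1-\delta$, and as $U$ is open and $\nu\mapsto\nu(U)$ is lower semicontinuous, $\nu(U)>1-\delta$ for all $\nu$ near $\mu$, so $N_\nu(n,\varepsilon,\delta)\le r$ there; being $\N$‑valued, the function $\mu\mapsto N_\mu(n,\varepsilon,\delta)$ then has $\{\mu\mid N_\mu(n,\varepsilon,\delta)\le r\}$ open for each $r$. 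Since $\liminf_n\frac1n\log N_\mu(n,1/m,\delta)\ge 0$ always,
\[\Big\{\mu\in\M(X)\ \Big|\ \liminf_{n}\tfrac1n\log N_\mu(n,1/m,\delta)=0\Big\}=\bigcap_{l\in\N}\ \bigcap_{N\in\N}\ \bigcup_{n\ge N}\Big\{\mu\ \Big|\ \tfrac1n\log N_\mu(n,1/m,\delta)<\tfrac1l\Big\}\]
is a $G_\delta$ subset of $\M(X)$, and hence so is $\mathcal D:=\M(f)\cap\bigcap_{m\in\N}\{\mu\mid \liminf_n\frac1n\log N_\mu(n,1/m,\delta)=0\}$ (as a subset of $\M(f)$, using that $\M(f)$ is closed in $\M(X)$).

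It remains to prove $\mathcal D=\mathcal{E}_0(f)$, which is the heart of the argument. For \emph{ergodic} $\nu$, Katok's formula gives $\lim_{\varepsilon\to0}\liminf_n\frac1n\log N_\nu(n,\varepsilon,\delta)=h_\nu(f)$; since the limit over $\varepsilon\downarrow0$ is a monotone limit of nonnegative numbers, it vanishes iff each $\liminf_n\frac1n\log N_\nu(n,1/m,\delta)$ does, so $\nu\in\mathcal D\Leftrightarrow h_\nu(f)=0$. For an arbitrary $\mu\in\M(f)$ with ergodic decomposition $\mu=\int\nu\,d\tau_\mu(\nu)$, one compares $N_\mu(n,\varepsilon,\delta)$ with the covering numbers of the components: any $A$ with $\mu(A)>1-\delta$ has $\nu(A)$ bounded away from $0$ for $\nu$ in a set of positive $\tau_\mu$‑measure, so an $(n,\varepsilon)$‑cover of $A$ is one for a large‑measure set of many components; conversely, covering the components' good sets separately and recombining yields an upper bound. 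These estimates give $\lim_{\varepsilon\to0}\liminf_n\frac1n\log N_\mu(n,\varepsilon,\delta)=\esssup_\nu h_\nu(f)$ (essential supremum with respect to $\tau_\mu$), so $\mu\in\mathcal D$ iff $h_\nu(f)=0$ for $\tau_\mu$‑a.e.\ $\nu$; by the affinity identity $h_\mu(f)=\int h_\nu(f)\,d\tau_\mu(\nu)$ and $h\ge0$, this is exactly $h_\mu(f)=0$. Therefore $\mathcal D=\mathcal{E}_0(f)$, and $\mathcal{E}_0(f)$ is $G_\delta$ in $\M(f)$.

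The main obstacle I foresee is not the $G_\delta$ structure --- which drops out once $N_\mu(n,\varepsilon,\delta)$ is recognized as upper semicontinuous in $\mu$ and the condition is phrased through a $\liminf$, keeping everything of the form ``$\bigcap_N\bigcup_{n\ge N}(\text{open})$'' --- but the identity $\mathcal D=\mathcal{E}_0(f)$ for non‑ergodic measures: making precise the two‑sided comparison of $N_\mu(n,\varepsilon,\delta)$ with the covering numbers of the ergodic components, particularly the reverse inequality, where a continuum of components has to be handled (via a measurable selection, or by approximating $\tau_\mu$ weakly by finitely supported measures), and then combining this with the affinity of the metric entropy. One could instead try to deduce the corollary from Theorem~\ref{central1} together with the quoted fact that $\M_e(f)$ is $G_\delta$ in $\M(f)$, transporting the $G_\delta$ property across the ergodic decomposition; but since $\mu\mapsto\tau_\mu$ is not weakly continuous, that route is, if anything, the more delicate one (a naive complexity count there tends to land in $F_{\sigma\delta}$ rather than $G_\delta$).
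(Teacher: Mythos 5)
Your topological step is sound: $\mu\mapsto N_\mu(n,\ve,\delta)$ is indeed upper semicontinuous (open unions, portmanteau), and the $\liminf$ condition is manifestly $G_\delta$. The proof breaks at the identification $\mathcal D=\mathcal{E}_0(f)$, specifically at the claimed identity, for a \emph{fixed} $\delta\in(0,1)$, that $\lim_{\ve\to0}\liminf_n\frac1n\log N_\mu(n,\ve,\delta)=\operatorname{ess\,sup}_\nu h_\nu(f)$ over the ergodic decomposition. With $\delta$ fixed this is false. Take $\mu=(1-c)\nu_1+c\,\nu_2$ with $\nu_1,\nu_2$ ergodic, $h_{\nu_1}(f)=0$, $h_{\nu_2}(f)>0$ and $0<c<\delta$. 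Choose $\delta'\in(0,1)$ with $(1-c)(1-\delta')>1-\delta$; Katok's formula applied to $\nu_1$ gives, for each $\ve$ and along a subsequence of $n$, families of $e^{o(n)}$ open $(n,\ve)$-balls covering sets $A_n$ with $\nu_1(A_n)>1-\delta'$, hence $\mu(A_n)>1-\delta$. Thus $N_\mu(n,\ve,\delta)=e^{o(n)}$ along that subsequence, so $\mu\in\mathcal D$, although $h_\mu(f)=c\,h_{\nu_2}(f)>0$. The point is that a cover of $\mu$-measure $>1-\delta$ may simply ignore any collection of components of total $\tau_\mu$-mass $<\delta$; your sketched lower bound only produces \emph{some} positive-$\tau_\mu$-mass set of components seen by the cover, and nothing forces this set to contain the high-entropy ones. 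So the quantity you compute is a $\delta$-dependent quantile of $\nu\mapsto h_\nu(f)$, not its essential supremum, and $\mathcal D\supsetneq\mathcal{E}_0(f)$ in general: the argument as written does not prove the corollary.

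The obvious repair -- intersect over $\delta=1/j$, $j\in\N$, each $\mathcal D_{1/j}$ being $G_\delta$ -- still leaves the substantive work undone: to show $h_\mu(f)>0\Rightarrow\mu\notin\mathcal D_{1/j}$ for small $1/j$ you must convert ``$\tau_\mu(\{\nu\mid h_\nu(f)\ge t\})>\delta$'' into an exponential lower bound on $N_\mu(n,\ve,\delta)$ that holds for the $\liminf$ in $n$, while the component witnessing $\nu(A_n)$ bounded below may change with $n$ and Katok's formula gives only per-component, non-uniform asymptotics (one needs an Egorov-type uniformization over a positive-mass set of components, or a measurable-selection argument). In addition, you invoke Katok's entropy formula for ergodic measures on a non-compact Polish space with $f$ merely uniformly continuous; the classical statement is for compact systems, and in this generality both halves need justification (finite-entropy partitions of small diameter for the upper bound, and a Brin--Katok/Riquelme-type inequality for the lower bound). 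The paper avoids all of this by never comparing covering numbers across ergodic components: it works with the $\mu$-essential supremum of the lower local entropy $\underline{h}_\mu(f,x)$, proves the relevant sets are $G_\delta$ in $\M(X)$ (Lemma~\ref{Gdelta3}), identifies them on $\M_e(f)$ via Corollary~\ref{BrinKatokC}, and then passes from $\M_e(f)$ to $\M(f)$ using that $\M_e(f)$ is $G_\delta$ in $\M(f)$; your route, to be viable, needs the missing uniform non-ergodic Katok-type estimate spelled out.
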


Since for any topological dynamical system $(X,f)$ (that is, $X$ is a compact metric space and $f:X\rightarrow X$ is a continuous function; in some situations, $f$ is required to be surjective), $\M_e(f)$ is a $G_\delta$ subset of $\M(f)$ (see~Proposition~5.7 in~\cite{Sigmundlibro}), one has from Remark~\ref{RGdeltaentropy} the following version of Corollary~\ref{corcentral1}. 

\begin{cor}
\label{corcentral2}
Let $(X,f)$ be a topological dynamical system. Then, $\mathcal{E}_0(f)$
is a $G_\delta$ subset of $\M(f)$.
\end{cor}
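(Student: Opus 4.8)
The plan is to derive Corollary~\ref{corcentral2} by reducing it to Corollary~\ref{corcentral1} (equivalently, to Theorem~\ref{central1}) together with the classical fact that, for a topological dynamical system, the ergodic measures form a $G_\delta$ subset of $\M(f)$. So the first step is simply to invoke Proposition~5.7 in~\cite{Sigmundlibro}: since $(X,f)$ is a topological dynamical system, $\M_e(f)$ is a $G_\delta$ subset of $\M(f)$. Note that in this setting $X$ is in particular a compact (hence Polish) metric space, and $f$ is continuous on a compact metric space, hence automatically uniformly continuous; thus the hypotheses of Theorem~\ref{central1} are met.

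Next I would apply Theorem~\ref{central1} to conclude that $\mathcal{E}_0^e(f)=\{\mu\in\M_e(f)\mid h_\mu(f)=0\}$ is a $G_\delta$ subset of $\M_e(f)$. The point is then to transfer this relative $G_\delta$ statement into a $G_\delta$ statement inside $\M(f)$. Since $\M_e(f)$ is itself $G_\delta$ in $\M(f)$, and a $G_\delta$ subset of a $G_\delta$ subspace is again $G_\delta$ in the ambient space (a set that is $G_\delta$ in the subspace topology of $\M_e(f)$ has the form $G\cap\M_e(f)$ with $G$ a $G_\delta$ in $\M(f)$, and the intersection of two $G_\delta$ sets is $G_\delta$), we obtain that $\mathcal{E}_0^e(f)$ is a $G_\delta$ subset of $\M(f)$.

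Finally I would identify $\mathcal{E}_0^e(f)$ with $\mathcal{E}_0(f)$. By the ergodic decomposition and the affinity of the entropy map $\mu\mapsto h_\mu(f)$ on $\M(f)$ (so that $h_\mu(f)=\int h_\nu(f)\,d\tau(\nu)$, where $\tau$ is the measure on $\M_e(f)$ appearing in the ergodic decomposition of $\mu$), a measure $\mu$ with $h_\mu(f)=0$ need not be ergodic, so in general $\mathcal{E}_0(f)\supsetneq\mathcal{E}_0^e(f)$. The cleanest route, which is presumably what ``Remark~\ref{RGdeltaentropy}'' records, is not to identify the two sets but to run the same argument used for Corollary~\ref{corcentral1}: one shows directly that $\mathcal{E}_0(f)$ is $G_\delta$ in $\M(f)$ by exhibiting it as a countable intersection of open sets, using that the functions $\mu\mapsto h_\mu(f,\mathcal{Q})$ are upper semicontinuous for suitable finite partitions $\mathcal{Q}$ (this is exactly the mechanism behind Theorem~\ref{central1}, and it does not require ergodicity), so that $\{\mu\mid h_\mu(f)=0\}=\bigcap_{k}\{\mu\mid h_\mu(f,\mathcal{Q}_k)<1/k\}$ for an appropriate sequence of partitions; each such set is open by upper semicontinuity. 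Corollary~\ref{corcentral2} is then the instance of this where $X$ is compact, and the only thing one must additionally remark is that compactness is not needed for the entropy argument itself — it was needed in Corollary~\ref{corcentral1} only to deduce, via Parthasarathy, that $\M_e(f)$ is $G_\delta$, whereas here that is supplied by Proposition~5.7 of~\cite{Sigmundlibro}.

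The main obstacle, and the only genuinely nontrivial input, is the upper semicontinuity of the partition-entropy functions $\mu\mapsto h_\mu(f,\mathcal{Q})$ with respect to the weak topology and the care needed in choosing the partitions $\mathcal{Q}_k$ (their boundaries must be $\mu$-null in a way that is compatible with weak convergence, and one must arrange that the supremum over the chosen countable family of partitions recovers $h_\mu(f)$) — but all of this is already internal to the proof of Theorem~\ref{central1} / Corollary~\ref{corcentral1}, so for Corollary~\ref{corcentral2} it suffices to cite those results and the fact that, in the compact continuous setting, continuity automatically upgrades to uniform continuity and $\M_e(f)$ is automatically $G_\delta$.
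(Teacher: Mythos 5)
Your first half coincides with the paper's own proof: Corollary~\ref{corcentral2} is obtained there precisely by combining Proposition~5.7 of~\cite{Sigmundlibro} (so that $\M_e(f)$ is a $G_\delta$ subset of $\M(f)$) with the compact case of Theorem~\ref{central1} (Remark~\ref{RGdeltaentropy}; continuity on a compact metric space upgrades to uniform continuity), and your remark that a relative $G_\delta$ inside a $G_\delta$ subspace is again $G_\delta$ in the ambient space is the intended gluing. You are also right to flag that this only delivers that $\mathcal{E}_0^e(f)$ is $G_\delta$ in $\M(f)$, whereas the corollary concerns $\mathcal{E}_0(f)$, which is strictly larger as soon as there are two distinct zero-entropy ergodic measures; the paper passes over this passage from ergodic to general invariant measures in a single line, so the worry is legitimate.

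The genuine gap is in your proposed repair. Writing $\mathcal{E}_0(f)=\bigcap_{k}\{\mu\mid h_\mu(f,\mathcal{Q}_k)<1/k\}$ and declaring each of these sets open ``by upper semicontinuity'' is not justified in this generality: upper semicontinuity of $\mu\mapsto H_\mu(\mathcal{Q}\vee f^{-1}\mathcal{Q}\vee\cdots\vee f^{-n+1}\mathcal{Q})$ in the weak topology holds only at measures giving zero mass to the boundaries of the atoms of $\mathcal{Q}$, and no fixed countable family of partitions can be arranged to have this property simultaneously for every $\mu\in\M(f)$ of an arbitrary topological dynamical system (it works for, say, zero-dimensional $X$ via clopen partitions, but not for the interval). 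This is exactly the semicontinuity-based strategy the paper is written to avoid (Remark~\ref{semicontinuity}); for generic continuous interval maps the entropy function is not upper-semicontinuous (\cite{Catsigeras2017,Burguet}), so there is no general principle you can invoke to make those sets open. It is also a mischaracterization to call this ``the mechanism behind Theorem~\ref{central1}'': that theorem is proved via Bowen-ball local entropies, namely Corollary~\ref{BrinKatokC} (a Brin--Katok-type statement that does use ergodicity) combined with Lemma~\ref{Gdelta3}, not via partition entropies. If you want to close the ergodic-versus-invariant gap within the paper's framework in the compact setting, the natural route is the non-ergodic form of Brin--Katok's theorem, $h_\mu(f)=\int\underline{h}_\mu(f,x)\,d\mu(x)$ for every $\mu\in\M(f)$, which gives $h_\mu(f)=0$ if and only if $\mu\textrm{-}\esssup\,\underline{h}_\mu(f,x)=0$, and hence $\mathcal{E}_0(f)=\M(f)\cap\bigcap_{m,k,s}\underline{M}(1/m,1/k,s)$ is $G_\delta$ in $\M(f)$ directly by Lemma~\ref{Gdelta3}, without ever using that $\M_e(f)$ is $G_\delta$. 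As written, however, your final step does not prove the statement.
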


The idea used in the proof of Theorem~\ref{central1} combines a weaker version of Brin-Katok's Theorem (Corollary~\ref{BrinKatokC}) with an important characterization of the lower local entropy of an $f$-ergodic measure (presented in the proof of Theorem~\ref{Gdeltaentropy}; see also Lemma~\ref{Gdelta3}).

\begin{rek}\label{semicontinuity} We note that the result stated in Corollary~\ref{corcentral1} is trivial if the entropy function $\M(f)\ni \mu\mapsto h_\mu(f)\in[0,\infty]$ is upper-semicontinuous; this is particularly true if $(X,f)$ is expansive (see~\cite{Walters}; see also~\cite{Velozo} for others examples of systems for which the entropy function is upper-semicontinuous, including a system where $X$ is non-compact). The point is that the main strategy, up to this work, used in the proof that $\mathcal{E}_0(f)$ is a $G_\delta$ subset of $\M(f)$ consists in showing that the entropy map is upper-semicontinuous (see~\cite{Hofbauer,Oprocha} as examples of this fact); our strategy applies whether or no this is true. 

  As a matter of fact, it follows from Corollary~34 in~\cite{Catsigeras2017} that the metric entropy  of a generic continuous interval map $f:[0,1]\rightarrow[0,1]$ is not an upper-semicontinuous function of $\M(f)$ (Burguet has proven in~\cite{Burguet} that the metric entropy of such maps is not an upper-semicontinuous function of $\M_e(f)$). 
  This shows that there exist numerous examples of systems for which the entropy function is not upper-semicontinuous (in fact, as it was shown by these works, this can be quite rare for some families of maps).
\end{rek}

As a consequence of Corollary~\ref{corcentral1}, if $\M_p(f)$ is dense in $\M(f)$ (see \cite{Abdenur,Gelfert,Sigmund1970,Sigmund1971,Sigmund1974} for instances of systems which satisfy such condition) or even the weaker condition that $\mathcal{E}_0(f)$ is dense in $\M(f)$ (which is true if $(X,f)$ is a topological dynamical system which is transitive and has the shadowing property; see Proposition~3.8 in \cite{Oprocha} for details), one has the following result.

\begin{teo}
\label{central2}
Let $X$ be a Polish space, let $f:X\to X$ be a uniformly continuous function, and suppose that $\mathcal{E}_0(f)$ is dense in $\M(f)$. Then, $\mathcal{E}_0(f)$ is a dense $G_\delta$ subset of $\M(f)$.
\end{teo}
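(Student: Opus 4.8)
The plan is to combine Corollary~\ref{corcentral1} with the Baire category theorem. By Corollary~\ref{corcentral1}, the set $\mathcal{E}_0(f)=\{\mu\in\M(f)\mid h_\mu(f)=0\}$ is a $G_\delta$ subset of $\M(f)$. Since $X$ is Polish, $\M(X)$ is a Polish (in particular, completely metrizable) space; and $\M(f)$, being a closed subset of $\M(X)$ (the invariance condition $\int\varphi\,d\mu=\int\varphi\circ f\,d\mu$ for all bounded continuous $\varphi$ is preserved under weak limits, using uniform continuity of $f$ to ensure $\varphi\circ f$ is bounded continuous), is itself a Polish space. Hence $\M(f)$ is a Baire space.

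Next I would observe that a dense $G_\delta$ subset of a Baire space is a residual (comeager) set, but the statement we must prove is weaker: we only need that $\mathcal{E}_0(f)$ is a \emph{dense} $G_\delta$ subset. By hypothesis $\mathcal{E}_0(f)$ is dense in $\M(f)$, and we have just recalled it is $G_\delta$; so the conclusion is immediate. The only genuine content beyond citing Corollary~\ref{corcentral1} is the verification that $\M(f)$ is closed in $\M(X)$, hence Polish, which is where uniform continuity of $f$ enters (to guarantee $\varphi\circ f$ is bounded and continuous whenever $\varphi$ is, so that the defining relations of invariance are weak-topology-closed conditions).

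The main (and essentially only) obstacle is therefore not a deep one: it is simply to be careful about which ambient space we are working in and to note that the $G_\delta$ property of $\mathcal{E}_0(f)$ is relative to $\M(f)$, not to $\M(X)$, so that density in $\M(f)$ is exactly the right hypothesis. I would also remark, for context, that when one additionally knows $\M_p(f)$ is dense in $\M(f)$ (as in the cited examples), the density hypothesis on $\mathcal{E}_0(f)$ is automatically satisfied, since periodic measures have zero metric entropy; but since the theorem is stated directly in terms of density of $\mathcal{E}_0(f)$, nothing further is needed. In short: apply Corollary~\ref{corcentral1}, note $\M(f)$ is Polish hence Baire, and combine the $G_\delta$ and density properties.
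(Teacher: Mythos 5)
Your proposal is correct and matches the paper's own (implicit) argument: the paper states Theorem~\ref{central2} as an immediate consequence of Corollary~\ref{corcentral1} together with the density hypothesis, exactly as you do. Your extra remarks (closedness of $\M(f)$ in $\M(X)$, Baire considerations) are harmless but not needed for the statement as phrased, and continuity of $f$ alone already suffices for that closedness.
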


The fact that $\M_p(f)$ is a dense subset of $\M(f)$ is particularly true if $(X,f)$ satisfies the periodic specification property (see~\cite{Sigmund1974}); $(X,f)$ has the periodic specification property if, for each $\ve>0$, there exists $M(\ve)\in\N$ such that for each $x_1,\ldots x_k\in X$, for each choice of intervals of integers $A_1=[a_1,b_1],\ldots,A_k=[a_k,b_k]$ such that, for each $i=2,\ldots,k$, $a_i-b_{i-1}>M(\ve)$, and for each integer $p>b_k-a_1+M(\ve)$, there exists an $f$-periodic point, $x\in X$, of period $p$ such that, for each $i=1,\ldots,k$ and each $j\in A_i$, $d(f^jx,f^jx_i)<\ve$ (this definition does not depend on the choice of the metric $d$).


Thus, the following result is a particular case of Theorem~\ref{central2} and settles a conjecture posed by Sigmund in~\cite{Sigmund1974} (we remark that this conjecture was partially settled by Theorem~5.2 in~\cite{AS2}, in the particular case that $f$ is a Lipschitz map).

\begin{teo}\label{sigmund}
  Let $(X,f)$ be topological dynamical system and suppose that $(X,f)$ satisfies the periodic specification property. Then, $\mathcal{E}_0(f)$ is a dense $G_\delta$ subset of $\M(f)$.
\end{teo}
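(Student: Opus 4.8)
The plan is to obtain Theorem~\ref{sigmund} as an immediate consequence of Theorem~\ref{central2}. First I would observe that the structural hypotheses of Theorem~\ref{central2} are automatically satisfied: since $(X,f)$ is a topological dynamical system, $X$ is a compact metric space, hence Polish, and $f:X\to X$, being continuous on a compact metric space, is uniformly continuous. Therefore, in order to apply Theorem~\ref{central2} it suffices to check that $\mathcal{E}_0(f)$ is dense in $\M(f)$.

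To establish this density, I would show that $\M_p(f)\subseteq\mathcal{E}_0(f)$ and invoke the fact, due to Sigmund~\cite{Sigmund1974}, that the periodic specification property implies that $\M_p(f)$ is dense in $\M(f)$. The inclusion $\M_p(f)\subseteq\mathcal{E}_0(f)$ is elementary: if $\mu\in\M_p(f)$ is supported on a periodic orbit $\{x,fx,\dots,f^{p-1}x\}$ with $f^px=x$, then $\supp\mu$ consists of $p$ points, so for every measurable partition $\mathcal{Q}$ of $X$ with $H(\mathcal{Q})<\infty$ the dynamical refinements $\mathcal{Q}_n:=\mathcal{Q}\vee f^{-1}\mathcal{Q}\vee\cdots\vee f^{-n+1}\mathcal{Q}$ have at most $p$ atoms of positive $\mu$-measure; consequently $H(\mathcal{Q}_n)\le\log p$ for all $n$, whence $h_\mu(f,\mathcal{Q})=\lim_n\tfrac1n H(\mathcal{Q}_n)=0$ and thus $h_\mu(f)=0$. (Recall that the density of $\M_p(f)$ itself follows from the specification property by approximating, in the weak topology, an arbitrary $\mu\in\M(f)$ by empirical measures along long orbit segments and then shadowing such a segment by a genuine periodic orbit; I would either reproduce this standard construction or simply cite~\cite{Sigmund1974}.)

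Combining the two previous steps, $\mathcal{E}_0(f)$ contains the dense set $\M_p(f)$, hence is itself dense in $\M(f)$; Theorem~\ref{central2} then gives that $\mathcal{E}_0(f)$ is a dense $G_\delta$ subset of $\M(f)$, as claimed. I do not expect any genuine obstacle here: the only nontrivial inputs are Theorem~\ref{central2} (already proved) and Sigmund's density theorem, and the single point requiring a moment's care is the verification that periodic measures have zero metric entropy, which is the short computation above. One could, if desired, replace the appeal to $\M_p(f)$ by the weaker route mentioned in the discussion preceding Theorem~\ref{central2} — namely that $\mathcal{E}_0(f)$ is dense whenever $(X,f)$ is transitive with the shadowing property — but for systems with the periodic specification property the periodic-measure argument is the most direct.
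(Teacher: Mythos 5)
Your proposal is correct and follows essentially the same route as the paper: Theorem~\ref{sigmund} is obtained there as a particular case of Theorem~\ref{central2}, using Sigmund's result that the periodic specification property makes $\M_p(f)$ dense in $\M(f)$, together with the (implicit in the paper, spelled out by you) fact that $\M_p(f)\subseteq\mathcal{E}_0(f)$. Your explicit verification that periodic measures have zero entropy is a harmless and welcome addition, but it does not change the argument.
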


\begin{rek} Since a continuous map $f:[0,1]\rightarrow[0,1]$ satisfies the periodic specification property if and only if it is topologically mixing (see Theorem~8.7 in~\cite{Blokh1995}), Theorem~\ref{sigmund} settles a question posed by Blokh (namely, item 1) in Problem of Subsection~1.10 in~\cite{Blokh1995}, who asked whether or no $\mathcal{E}_0(f)$ is a residual subset of $\M(f)$ if $f$ is a topologically mixing map.
\end{rek}  

In fact, there are several other situations where it is known that $\mathcal{E}_0(f)$ is a generic subset of $\M(f)$ (that is, it contains a dense $G_\delta$ subset). Namely, this is the case when:
\begin{enumerate}
\item $(X,f)$ is an expansive topological dynamical system 
  that satisfies the periodic specification property (see Remark~\ref{semicontinuity} and the paragraph after Theorem~\ref{central2}); this is true for Axiom-A systems, full-shifts over finite alphabets and for arbitrary topologically mixing expanding maps over compact spaces (\cite{Sigmundlibro,Sigmund1974,Viana}). Recall that a homeomorphism $f:X\rightarrow X$ is expansive if there exists a constant $\ve> 0$ such that for any $x\neq y\in X$, there exists an $n\in\Z$ such that $d(f^nx,f^ny)\ge\ve$; see Subsection~\ref{expmaps} for a more detailed discussion involving expanding maps;
\item $(X,f)$ is transitive, has the shadowing property, and $\M(f)\ni\mu\mapsto h_\mu(f)\in[0,h_{top}(f)]$ is an upper-semicontinuous function (see Proposition~3.8 in \cite{Oprocha}); in fact, Theorem~\ref{central2} extends this result to the more general setting where the metric entropy is not upper-semicontinuous; 
\item $X=[0,1]$ and $f$ is a continuous piecewise monotonic function (\cite{Blokh1995,Hofbauer});
\item $(X,f)$ is such that $f:M\rightarrow M$ is a $C^1$-generic diffeomorphism over a compact boundaryless manifold $M$ and $X$ is an isolated non-trivial transitive set of $f$ (\cite{Abdenur}).
\end{enumerate}

For examples of applications of Theorem~\ref{central2} to systems that do not (necessarily) satisfy the periodic specification property and for which the entropy function is not (necessarily) upper-semicontinuous, one may highlight the following situations:
\begin{enumerate}
\item $X$ is a Polish metric space and $f$ is a continuous function for which there exists $K\subset\mathrm{Per}(f)$ such that $(X,f)$ is $K$-closable (see~\cite{Gelfert} for the definition);  this is a consequence of Theorem~4.11 in~\cite{Gelfert}. This is particularly true for $S$-gap shifts and $\beta$-shifts, as defined in~\cite{Gelfert}; 
\item as mentioned before, $(X,f)$ is a topological dynamical system which is transitive and has the shadowing property; this is a consequence of the first part of Proposition~3.8 in~\cite{Oprocha} (see~\cite{Oprocha} for examples of systems satisfying such conditions).
\item $X$ is a compact Riemannian manifold and $f$ is a mixing $C^{1+\alpha}$ ($\alpha>0$) diffeomorphism which preserves a hyperbolic Borel probability measure $\mu$; it was showed in~\cite{Hirayama} that there exists a Borel set $\tilde{\Lambda}$ of full $\mu$-measure such that $\M_p(f)$ is dense in the set of all $f$-invariant probability measures supported on $\tilde{\Lambda}$. 
\end{enumerate}

Let $f:[0,1]\rightarrow[0,1]$ be a continuous function. It follows from Corollary~10.5 in \cite{Blokh1995} that $\mathcal{E}_0^e(f)$ is dense in $\M_e(f)$, and therefore, by Theorem~\ref{central1}, that $\mathcal{E}_0^e(f)$ is a dense $G_\delta$ subset of $\M_e(f)$. However, for a typical continuous function $f\in\mathcal{C}([0,1])$, $\M_e(f)$ is a meager subset of $\M(f)$ (this is a consequence of Theorems~16 and~24 in~\cite{Catsigeras2017} and  Theorems~1 and~2 in~\cite{Catsigeras2019I}), so it is not true, for such functions, that $\mathcal{E}_0(f)$ is a dense $G_\delta$ subset of $\M(f)$.

More generally, Catsigeras has shown in Theorem 2 in~\cite{Catsigeras2019} that for a typical map $f\in C(M)$, where $M$ is a compact manifold of dimension $m$,  there exists a sequence of ergodic measures $\mu_n$, with $h_{\mu_n}(f)=\infty$, such that $\mu_n\to\mu$ and $h_\mu(f)=0$; therefore, its entropy function is not upper-semicontinuous. 
Furthermore, for such $f\in C(M)$, it follows from Corollary~15 in \cite{Catsigeras2019I} that the set $\M_p(f)$ is dense in $\M_e(f)$; hence, by Theorem~\ref{central1}, $\mathcal{E}_0^e(f)$ is a dense $G_\delta$ subset of $\M_e(f)$, although 
by  Theorem~1 in~\cite{Catsigeras2019I}, $\mathcal{E}_0^e(f)$ is a meager subset of $\M(f)$. 


\subsection{Correlation entropies}
We also have some results regarding the so-called correlation entropies (they are an entropy analogue of the Hentschel-Procaccia spectrum of generalized dimensions; see Chapter 2 in~\cite{Verbitskiy} for a broader discussion).

Let $(X,d)$ be a Polish metric space and let $f: X \to X$ be a continuous transformation preserving a Borel probability measure $\mu$. Set, for each $n\in\N$ and each $x,y\in X$,
\[d_n(x,y):=\max\{d(f^ix,f^iy)\mid 0\le i< n\},
\]
and set, for each $\ve>0$,
\[B_n(x,\ve):=\{y\in X\mid d_n(x,y)<\ve\},
\]
the so-called (open) Bowen ball of size $n$ and radius $\ve$ centered at $x\in X$.

For each $q \in \R$, $q\neq 1$, one defines the lower and the upper correlation entropies of order $q$ as follows: 
\begin{eqnarray*}
\underline{H}_{\mu}(f, q)&=&\lim _{\varepsilon \rightarrow 0} \liminf _{n \rightarrow \infty}-\frac{1}{(q-1) n} \log \int_{\supp(\mu)} \mu\left({B}_{n}(x, \varepsilon)\right)^{q-1} d \mu(x), \\ 
\overline{H}_{\mu}(f, q)&=&\lim _{\varepsilon \rightarrow 0} \limsup _{n \rightarrow \infty}-\frac{1}{(q-1) n} \log \int_{\supp(\mu)} \mu\left({B}_{n}(x, \varepsilon)\right)^{q-1} d \mu(x);
\end{eqnarray*}
for $q = 1$, one defines
\begin{eqnarray*}
\underline{H}_{\mu}(f, 1)&=&\lim _{\varepsilon \rightarrow 0} \liminf _{n \rightarrow \infty}-\frac{1}{ n} \int_{\supp(\mu)}  \log\mu\left({B}_{n}(x, \varepsilon)\right) d \mu(x), \\ 
\overline{H}_{\mu}(f, 1)&=&\lim _{\varepsilon \rightarrow 0} \limsup _{n \rightarrow \infty}-\frac{1}{ n} \int_{\supp(\mu)} \log \mu\left({B}_{n}(x, \varepsilon)\right) d \mu(x)
\end{eqnarray*}
(since $f$ is a continuous function, if $x\in\supp(\mu)$, then for each $n\in\N$ and each $\ve>0$, $\mu(B_n(x,\ve))>0$).

Let us recall some properties of the upper and the lower correlation entropies.

\begin{propo}\label{Verbitskiy} Let $(X,d)$ be a Polish metric space, let $f: X \to X$ be a continuous function, and let $\mu\in\M(f)$.
\begin{itemize}
\item[i)] For each $q_{1}<q_{2}$, $q_1,q_2\neq 1$, one has
\[\underline{H}_{\mu}\left(f, q_{1}\right) \geq \underline{H}_{\mu}\left(f, q_{2}\right) \geq 0, \quad \overline{H}_{\mu}\left(f, q_{1}\right) \geq \overline{H}_{\mu}\left(f, q_{2}\right) \geq 0;\]
if $h_\mu(f)<\infty$, then for each $q_{1}<1<q_{2}$, one has
\[\underline{H}_{\mu}\left(f, q_{1}\right) \geq \underline{H}_{\mu}\left(f, 1\right) \geq \underline{H}_{\mu}\left(f, q_2\right), \quad \overline{H}_{\mu}\left(f, q_{1}\right) \geq \overline{H}_{\mu}\left(f, 1\right) \geq  \overline{H}_{\mu}\left(f, q_2\right);\]
\item[ii)] $\overline{H}_{\mu}(f, 1)\le h_{\mu}(f)$, and if $\mu\in\M_e(f)$, then   $h_{\mu}(f)\le \underline{H}_{\mu}(f, 1)$;
\item[iii)] for $q > 1$, one has $$\underline{H}_{\mu}\left(f, q\right) \leq \overline{H}_{\mu}\left(f, q\right)\leq h_\mu(f);$$
\item[iv)] if $\mu\in\M_e(f)$, then for $q \in [0,1]$, one has $$h_\mu(f)\leq \underline{H}_{\mu}\left(f, q\right) \leq \overline{H}_{\mu}\left(f, q\right)\leq h(f),$$
where $h(f)$ is the topological entropy of $f$ (which can be infinite);
\item[v)] $\underline{H}_{\mu}\left(f, q\right)$ and $\overline{H}_{\mu}\left(f, q\right)$ depend continuously on $q$ for $q \in (q^\ast, 1)\cup (1,\infty)$, where $q^\ast:=\inf\{q\in\mathbb{R}\mid \overline{H}_\mu(f,q)<\infty\}$.
\end{itemize}
\end{propo}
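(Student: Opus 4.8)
The plan is to follow the treatment of these generalized entropies in~\cite{Verbitskiy}, adapting it from the compact setting to the Polish one (the substitutes being: integration over $\supp(\mu)$ throughout, the version of Brin--Katok's theorem recorded in Corollary~\ref{BrinKatokC}, and allowing $h(f)=+\infty$). The common device is to work at fixed scale: for $\ve>0$ and $n\in\N$, put $g_{n,\ve}(x):=\mu(B_n(x,\ve))$ and $I_n(q,\ve):=\int_{\supp(\mu)}g_{n,\ve}(x)^{q-1}\,d\mu(x)$, and observe that, with $r:=q-1$, one has $-\tfrac{1}{(q-1)n}\log I_n(q,\ve)=-\tfrac1n\log\|g_{n,\ve}\|_{L^{r}(\mu)}$.

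For item i): the power-mean inequality on the probability space $(\supp(\mu),\mu)$ says $r\mapsto\|g_{n,\ve}\|_{L^{r}(\mu)}$ is non-decreasing on $\R\setminus\{0\}$, so $q\mapsto-\tfrac1n\log\|g_{n,\ve}\|_{L^{q-1}(\mu)}$ is non-increasing; this order survives $\liminf_n$, $\limsup_n$ and $\ve\downarrow0$, giving the monotonicity chains, while non-negativity follows from $0<g_{n,\ve}\le1$. When $h_\mu(f)<\infty$, Corollary~\ref{BrinKatokC} makes $-\log g_{n,\ve}$ $\mu$-integrable, hence $\|g_{n,\ve}\|_{L^{r}(\mu)}\to\exp\big(\int\log g_{n,\ve}\,d\mu\big)$ as $r\to0$, so the $q=1$ quantities are the one-sided limits of the $q\neq1$ ones, which yields the comparison across $q=1$. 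For items ii) and iii): by Corollary~\ref{BrinKatokC}, $-\tfrac1n\log g_{n,\ve}(x)\to h_\mu(f,x)$ (as $n\to\infty$, then $\ve\downarrow0$) for $\mu$-a.e.\ $x$, with $\int h_\mu(f,\cdot)\,d\mu=h_\mu(f)$, and this limit equals the constant $h_\mu(f)$ a.e.\ when $\mu$ is ergodic; assuming $h_\mu(f)<\infty$ (otherwise the claims are vacuous), an $L^{1}$-domination of $-\tfrac1n\log g_{n,\ve}$ gives $\overline H_\mu(f,1)\le h_\mu(f)$ via reverse Fatou, and ordinary Fatou together with monotone convergence in $\ve$ gives $h_\mu(f)\le\underline H_\mu(f,1)$ for ergodic $\mu$. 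Item iii) is then immediate, since $\underline H\le\overline H$ always and, for $q>1$, $\overline H_\mu(f,q)\le\overline H_\mu(f,1)\le h_\mu(f)$ by i) and ii).

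For item iv), take $\mu$ ergodic. For $q\in[0,1)$, Shannon--McMillan--Breiman gives, for each $R<h_\mu(f)$, that $\mu\{x:g_{n,\ve}(x)\le e^{-nR}\}\to1$; since $q-1\le0$ this forces $I_n(q,\ve)\ge\tfrac12\,e^{nR(1-q)}$ for $n$ large, so $\underline H_\mu(f,q)\ge R$, and letting $R\uparrow h_\mu(f)$ yields $h_\mu(f)\le\underline H_\mu(f,q)$ (the case $q=1$ is ii)). For the bound $\overline H_\mu(f,q)\le h(f)$, cover $\supp(\mu)$ by Bowen balls $B_n(x_i,\ve/2)$, let $N_n$ be the cardinality of a minimal such cover, and refine it to a Borel partition $\{P_i\}$ with $P_i\subseteq B_n(x_i,\ve/2)$; if $x\in P_i$ then $B_n(x,\ve)\supseteq B_n(x_i,\ve/2)\supseteq P_i$, so $g_{n,\ve}(x)\ge\mu(P_i)$, and, since $q-1\le0$,
\[I_n(q,\ve)=\sum_i\int_{P_i}g_{n,\ve}(x)^{q-1}\,d\mu(x)\le\sum_i\mu(P_i)^{q}\le N_n^{\,1-q}\]
by Hölder's inequality; dividing by $(1-q)n$, taking $\limsup_n$ then $\ve\downarrow0$, and using $\limsup_n\tfrac1n\log N_n\to h(f)$, gives the claim (for $q=1$, combine ii) with the variational principle; when $h(f)=\infty$ the inequality is trivial).

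Finally, for item v): at each fixed scale, Hölder's inequality makes $r\mapsto\log I_n(r+1,\ve)$ convex, hence locally Lipschitz, so the functions $q\mapsto-\tfrac{1}{(q-1)n}\log I_n(q,\ve)$ are equi-Lipschitz on each compact subinterval of $(q^\ast,1)\cup(1,\infty)$ on which they are uniformly bounded in $n$ --- and uniform boundedness there is precisely what the definition of $q^\ast$ secures, which is why $q^\ast$ and the singular point $q=1$ are excluded. Equicontinuity is inherited by $\liminf_n$ and $\limsup_n$, so $q\mapsto\liminf_n[\,\cdot\,]$ and $q\mapsto\limsup_n[\,\cdot\,]$ are continuous on such subintervals for each fixed $\ve$; one then propagates continuity through the monotone passage to the limit $\ve\downarrow0$, using the monotonicity of $\underline H_\mu(f,\cdot)$ and $\overline H_\mu(f,\cdot)$ in $q$ from i) to exclude the one-sided jumps that a monotone limit of continuous functions could otherwise create. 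This continuity argument --- establishing the uniform bounds and carrying continuity through the iterated $\liminf_n$, $\limsup_n$, $\lim_\ve$ --- is the main obstacle; the rest of the proposition is Hölder/Jensen bookkeeping on top of Corollary~\ref{BrinKatokC} and Shannon--McMillan--Breiman.
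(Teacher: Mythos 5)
Your overall plan (fixed-scale power-mean/Hölder estimates plus the paper's Brin--Katok substitute) is close in spirit to what the paper does --- the paper's proof is essentially a citation of Lemmas~2.1, 2.13, 2.14 and 2.26/Theorem~2.27 of \cite{Verbitskiy}, with only Theorem~\ref{BrinKatok}, Corollary~\ref{BrinKatokC} and Fatou added for the ergodic lower bound in ii) --- and your item i) is fine. But several of your steps have genuine gaps, precisely at the places where the compact-case machinery has to be replaced. First, in iv) you claim that Shannon--McMillan--Breiman gives $\mu\{x:\mu(B_n(x,\ve))\le e^{-nR}\}\to 1$ for every $R<h_\mu(f)$. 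SMB concerns partition cells, and since $\xi^n(x)\subset B_n(x,\ve)$ when $\diam(\xi)<\ve$, the comparison $\mu(B_n(x,\ve))\ge\mu(\xi^n(x))$ goes the wrong way; moreover the displayed claim is simply false at a fixed $\ve$ (rescale the metric of a positive-entropy shift so that $B_n(x,\ve)=X$ for all $n$). What you actually need is the lower bound on the local entropy of Bowen balls, i.e.\ exactly Corollary~\ref{BrinKatokC} (via Riquelme's theorem), applied for sufficiently small $\ve$ together with an Egorov-type uniformization before integrating. Second, in ii) you invoke a full Brin--Katok statement ($-\frac1n\log\mu(B_n(x,\ve))$ converging $\mu$-a.e.\ to $h_\mu(f,x)$ with $\int h_\mu(f,\cdot)\,d\mu=h_\mu(f)$); nothing of the sort is available here --- the paper only has the $\liminf$ quantity, the inequality of Theorem~\ref{BrinKatok}(2), and the essinf identity of Corollary~\ref{BrinKatokC} for ergodic measures, and it arranges the argument so that no more is needed. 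Likewise your ``reverse Fatou with an $L^1$-domination'' for $\overline{H}_\mu(f,1)\le h_\mu(f)$ (an inequality asserted for all invariant $\mu$) requires a dominating function uniform in $n$ (a Chung--Neveu-type maximal estimate) that you never produce; the standard route, which is what \cite{Verbitskiy} and the paper's proof of Theorem~\ref{BrinKatok} use, is the direct bound $-\frac1n\int\log\mu(B_n(x,\ve))\,d\mu\le\frac1n H(\xi^n)$ for a finite-entropy partition of diameter less than $\ve$ --- this also supplies the integrability of $\log\mu(B_n(\cdot,\ve))$ that you attribute, incorrectly, to Corollary~\ref{BrinKatokC}.

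Third, in v) the step you yourself identify as the main obstacle is not closed: ``monotonicity in $q$ excludes the one-sided jumps'' is not an argument, since a monotone-in-$\ve$ limit of continuous, monotone-in-$q$ functions can perfectly well be a step function. The actual proofs (Lemma~2.26/Theorem~2.27 of \cite{Verbitskiy}, which the paper cites) exploit that $q\mapsto\frac1n\log I_n(q,\ve)$ is convex, that convexity survives $\limsup_n$ and the monotone limit $\ve\downarrow0$, and that a finite convex function is continuous on the interior of its effective domain; you need to run that (or prove Lipschitz bounds uniform in $\ve$), not appeal to monotonicity of the limit. Finally, your covering bound in iv) presupposes finite $(n,\ve)$-spanning sets of $\supp(\mu)$ and the identification of $\lim_{\ve\to0}\limsup_n\frac1n\log N_n$ with $h(f)$, i.e.\ compactness; since the proposition is stated over a Polish space, you should at least say how $h(f)$ is defined and where compactness of the support enters (the paper sidesteps this by quoting the compact-space results, so the caveat is shared, but your write-up presents the count as automatic).
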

\begin{proof}
\begin{itemize}
\item[i)] The first pair of inequalities follow from Lemma~2.1 in~\cite{Verbitskiy}. Now, it follows from the proof of Lemma~2.14 in~\cite{Verbitskiy} that if $h_\mu(f)<\infty$, then $-\frac{1}{n}\int_{\supp(\mu)}\log\mu(B_n(x,\ve))d\mu<\infty$. The second pair of inequalities is now a consequence of Lemma~2.13 in~\cite{Verbitskiy}.
\item[ii)] The proof of the first inequality is presented in the proof of Lemma~2.14 in~\cite{Verbitskiy}. The second inequality combines Theorem~\ref{BrinKatok} with the fact that
  \[\underline{h}_\mu^{loc}(f)\le\lim_{\ve\to 0}\liminf_{n\to\infty}-\frac{1}{n}\int_{\supp(\mu)}\log\mu(B_n(x,\ve))d\mu\]
  (which is a consequence of Fatou's Lemma).   
\item[iii)] The proof combines items i) and ii).
\item[iv)] It is the same as item iii).
\item[v)] These are Lemma~2.26 and Theorem~2.27 in~\cite{Verbitskiy}.
\end{itemize}  
\end{proof}
If there exists $q \in \mathbb{R}$ such that $\underline{H}_{\mu}(f, q)=\overline{H}_{\mu}(f, q)$, one says that the correlation entropy $H_{\mu}(f, q)$ exists; in this case, one sets
\[H_{\mu}(f, q):=\underline{H}_{\mu}(f, q)=\overline{H}_{\mu}(f, q).\]

A direct consequence of Theorem~\ref{central2} and items ii), iii) of Proposition~\ref{Verbitskiy} is the following result.

\begin{cor}
Let $X$ be a  Polish space, let $f:X\to X$ be a uniformly continuous function and suppose that $\mathcal{E}_0(f)$ is a dense subset of $\M(f)$. Then, the set $\mathcal{CE}_0=\{\mu\in \M(f)\mid H_{\mu}\left(f, q\right)=0,\,\forall q\ge 1\}$ is residual in $\M(f)$.
\end{cor}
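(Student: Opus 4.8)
The plan is to combine Theorem~\ref{central2} with the sandwiching estimates from Proposition~\ref{Verbitskiy}. First I would recall what the corollary asks: assuming $\mathcal{E}_0(f)$ is dense in $\M(f)$, Theorem~\ref{central2} gives us that $\mathcal{E}_0(f)$ is a dense $G_\delta$ subset of $\M(f)$, hence residual. So the strategy is to show that $\mathcal{E}_0(f)\subseteq \mathcal{CE}_0$, i.e.\ that every $f$-invariant measure $\mu$ with $h_\mu(f)=0$ automatically satisfies $H_\mu(f,q)=0$ for all $q\ge 1$; residuality of $\mathcal{CE}_0$ then follows because it contains the residual set $\mathcal{E}_0(f)$ (a superset of a residual set is residual).

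The containment $\mathcal{E}_0(f)\subseteq\mathcal{CE}_0$ is where items ii) and iii) of Proposition~\ref{Verbitskiy} come in. Fix $\mu\in\M(f)$ with $h_\mu(f)=0$. For $q>1$, item iii) states directly $0\le \underline{H}_\mu(f,q)\le\overline{H}_\mu(f,q)\le h_\mu(f)=0$, so $\underline{H}_\mu(f,q)=\overline{H}_\mu(f,q)=0$, which means the correlation entropy $H_\mu(f,q)$ exists and equals $0$. For the borderline case $q=1$, item ii) gives $\overline{H}_\mu(f,1)\le h_\mu(f)=0$, hence $\overline{H}_\mu(f,1)=0$; and since by item iii) (or item i)) the lower correlation entropies are nonnegative and monotone, $0\le\underline{H}_\mu(f,1)\le\overline{H}_\mu(f,1)=0$ forces $\underline{H}_\mu(f,1)=0$ as well, so $H_\mu(f,1)=0$. (One can also argue $q=1$ as a limit of the $q>1$ case via item v), but the direct estimate is cleaner.) Thus every $\mu\in\mathcal{E}_0(f)$ satisfies $H_\mu(f,q)=0$ for all $q\ge 1$, i.e.\ $\mu\in\mathcal{CE}_0$.

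Putting the pieces together: by hypothesis $\mathcal{E}_0(f)$ is dense in $\M(f)$, so Theorem~\ref{central2} applies and $\mathcal{E}_0(f)$ is a dense $G_\delta$, hence residual, subset of $\M(f)$; since $\mathcal{E}_0(f)\subseteq\mathcal{CE}_0\subseteq\M(f)$, the set $\mathcal{CE}_0$ contains a residual set and is therefore itself residual in $\M(f)$. I do not expect any real obstacle here — the argument is essentially bookkeeping with the inequalities already catalogued in Proposition~\ref{Verbitskiy}. The only point requiring a moment's care is the $q=1$ endpoint, since item iii) as stated covers $q>1$ and item ii) must be invoked to pin down $\overline{H}_\mu(f,1)$; but as noted, combining ii) with nonnegativity closes that gap immediately. (It is worth remarking that nothing forces $\mathcal{CE}_0$ to be $G_\delta$ on its own, but residuality only requires containing a dense $G_\delta$, which is exactly what we have.)
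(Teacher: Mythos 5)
Your argument is correct and is exactly the paper's intended proof: the corollary is stated there as a direct consequence of Theorem~\ref{central2} (giving that $\mathcal{E}_0(f)$ is a dense $G_\delta$, hence residual) together with items ii) and iii) of Proposition~\ref{Verbitskiy} (giving $\mathcal{E}_0(f)\subseteq\mathcal{CE}_0$, with the $q=1$ endpoint handled via $\overline{H}_\mu(f,1)\le h_\mu(f)$ and nonnegativity). Nothing to add.
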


The next results show that if $X$ is compact and $f:X\rightarrow X$ is a positively expansive or a Lipshitz function, the correlation entropies for $q<1$ are typically equal to zero. Recall that a continuous function $f:X\rightarrow X$ is positively expansive if there exists a constant $\ve> 0$ such that for any $x\neq y\in X$, there exists an $n\in\N$ such that $d(f^nx,f^ny)\ge\ve$.

\begin{teo}\label{entrinf1}
Let $X$ be a compact metric space, let $f:X\rightarrow X$ be a  positively expansive function and suppose that $\M_p(f)$ is a dense subset of $\M(f)$. Then, the set $\mathcal{H}_-=\{\mu\in \M(f)\mid \underline{H}_{\mu}\left(f, s\right)=0,\,\forall s\in(0,1)\}$ is residual in $\M(f)$.
\end{teo}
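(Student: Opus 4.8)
The plan is to show that, for each fixed $s\in(0,1)$, the set $\mathcal{H}_s:=\{\mu\in\M(f)\mid\underline{H}_\mu(f,s)=0\}$ is a dense $G_\delta$ subset of $\M(f)$. This suffices: by Proposition~\ref{Verbitskiy}~i) the map $s\mapsto\underline{H}_\mu(f,s)$ is non-increasing, so $\mathcal{H}_-=\bigcap_{k\ge 1}\mathcal{H}_{1/(k+1)}$ is a countable intersection of dense $G_\delta$ sets, and since $\M(f)$ is a closed (hence Polish, hence Baire) subset of the compact metrizable space $\M(X)$, this intersection is again a dense $G_\delta$ set, in particular residual. The density of $\mathcal{H}_s$ follows from the hypothesis that $\M_p(f)$ is dense in $\M(f)$ together with the inclusion $\M_p(f)\subseteq\mathcal{H}_s$: if $\mu$ is equidistributed on an $f$-periodic orbit of period $p$ and $0<\ve<\min\{d(y,z)\mid y\neq z\ \text{lie on the orbit}\}$, then $\mu(B_n(x,\ve))=1/p$ for every point $x$ of the orbit and every $n\ge 1$, whence $-\frac{1}{(s-1)n}\log\int\mu(B_n(x,\ve))^{s-1}\,d\mu=\frac{\log p}{n}\to 0$, and letting $\ve\to 0$ we get $\underline{H}_\mu(f,s)=0$.

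For the $G_\delta$ part, set $\mathcal{Q}_n^\ve(\mu):=\int_{\supp(\mu)}\mu(B_n(x,\ve))^{s-1}\,d\mu(x)$ and $g_n^\ve(\mu):=\frac{1}{(1-s)n}\log\mathcal{Q}_n^\ve(\mu)$. Since $s-1<0$ we have $\mathcal{Q}_n^\ve(\mu)\ge 1$, so $g_n^\ve\ge 0$, and by definition $\underline{H}_\mu(f,s)=\lim_{\ve\to 0}\liminf_{n\to\infty}g_n^\ve(\mu)$. Because $g_n^\ve(\mu)$ is non-increasing in $\ve$, this limit equals $\sup_{j\in\N}\liminf_{n}g_n^{1/j}(\mu)$, so $\underline{H}_\mu(f,s)=0$ if and only if $\liminf_{n}g_n^{1/j}(\mu)=0$ for every $j$; and, as each $g_n^\ve\ge 0$, one has the identity $\{\mu\mid\liminf_{n}g_n^\ve(\mu)=0\}=\bigcap_{m\in\N}\bigcap_{N\in\N}\bigcup_{n\ge N}\{\mu\mid g_n^\ve(\mu)<1/m\}$. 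Hence $\mathcal{H}_s$ is a $G_\delta$ set as soon as we prove that, for each fixed $n$ and $\ve$, the function $\mu\mapsto g_n^\ve(\mu)$ — equivalently $\mu\mapsto\mathcal{Q}_n^\ve(\mu)$ — is upper-semicontinuous on $\M(X)$. (Positive expansiveness, together with compactness, is what makes the topological entropy finite and lets one, if desired, fix the scale $\ve$ once and for all below the expansive constant, streamlining the reduction just displayed; the core of the proof is unaffected by this.)

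The main obstacle is this upper-semicontinuity. The plan is: fix $n$ and $\ve$ and a finite Borel partition $\{P_1,\dots,P_r\}$ of $X$ into sets of $d_n$-diameter smaller than $\ve$, which exists because $(X,d_n)$ is compact. For $x\in P_i$ we have $P_i\subseteq B_n(x,\ve)$, hence $\mu(B_n(x,\ve))\ge\mu(P_i)$; this immediately yields the $\mu$-independent bound $\mathcal{Q}_n^\ve(\mu)\le\sum_i\mu(P_i)^{s}\le r^{\,1-s}$ and, crucially, the uniform equi-integrability estimate $\int_{\{x\,:\,\mu(B_n(x,\ve))<\delta\}}\mu(B_n(x,\ve))^{s-1}\,d\mu(x)\le r\,\delta^{\,s}$, whose right side tends to $0$ as $\delta\to 0$ precisely because $s>0$. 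On the other hand, if $\mu_\ell\to\mu$ weakly and $x_\ell\to x$, then $d_n(x_\ell,\cdot)\to d_n(x,\cdot)$ uniformly (as $f$ is continuous), so $\{y\,:\,d_n(x,y)<\ve-\eta\}\subseteq B_n(x_\ell,\ve)$ for $\ell$ large; applying the portmanteau theorem to the open set $\{y\,:\,d_n(x,y)<\ve-\eta\}$ and letting $\eta\downarrow 0$ gives $\liminf_\ell\mu_\ell(B_n(x_\ell,\ve))\ge\mu(B_n(x,\ve))$, hence (as $t\mapsto t^{s-1}$ decreases) $\limsup_\ell\mu_\ell(B_n(x_\ell,\ve))^{s-1}\le\mu(B_n(x,\ve))^{s-1}$: the integrands $x\mapsto\mu_\ell(B_n(x,\ve))^{s-1}$ thus upper-converge to the upper-semicontinuous function $x\mapsto\mu(B_n(x,\ve))^{s-1}$. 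Combining this with the equi-integrability through a truncation argument — a portmanteau-type statement for weakly convergent measures integrated against equi-integrable, upper-converging integrands: write $\mathcal{Q}_n^\ve(\mu_\ell)=\int\min(\,\cdot\,,L)\,d\mu_\ell+(\text{a uniformly small tail})$, take $\limsup_\ell$ in the bounded part, then let $L\to\infty$ — gives $\limsup_\ell\mathcal{Q}_n^\ve(\mu_\ell)\le\mathcal{Q}_n^\ve(\mu)$, which is the desired upper-semicontinuity of $\mathcal{Q}_n^\ve$ and hence of $g_n^\ve$. With that, $\mathcal{H}_s$ is a dense $G_\delta$ and $\mathcal{H}_-$ is residual, as claimed.
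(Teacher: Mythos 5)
Your proposal is correct in substance but takes a genuinely different route from the paper's. The paper never attempts to prove semicontinuity of $\mu\mapsto I_\mu(s,\ve,n)=\int_{\supp(\mu)}\mu(B_n(x,\ve))^{s-1}d\mu$; instead it dominates $I_\mu$ by Bowen-ball covering sums $W_\mu(s,r,n)=\inf_E\sum_j g_{r,n}(\mu,x_j)^s$ (Proposition~\ref{zerolowerfractal}), which are infima of finite sums of functions continuous in $\mu$ (Lemma~\ref{asympt1}, Proposition~\ref{teo2}) and hence upper-semicontinuous for free, giving the $G_\delta$ part (Proposition~\ref{Gdeltazero}); the price is paid in the density step (Proposition~\ref{dense1}), where positive expansiveness is used (through the claim that closed Bowen balls $\overline{B}_n(z,t)$ eventually shrink into $B(z,r)$) to make a periodic measure give bounded covering sums over all of $X$, so that $d^-_\mu(s,t/2)=0$. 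You work with the correlation integral itself: your density step is then immediate (each orbit point has $\mu(B_n(x,\ve))=1/p$ once $\ve$ is below the orbit separation), and the entire weight falls on the upper-semicontinuity of $\mu\mapsto\mathcal{Q}_n^\ve(\mu)$, which you extract from the fixed-partition bound $\mu(B_n(x,\ve))\ge\mu(P_i)$ — this is where $s<1$ enters, yielding both the uniform bound $r^{1-s}$ and the uniform tail estimate $r\delta^s$ — combined with the one-sided continuous convergence $\limsup_\ell\mu_\ell(B_n(x_\ell,\ve))^{s-1}\le\mu(B_n(x,\ve))^{s-1}$. The only step you assert rather than prove is the ``portmanteau-type'' limit for the truncations, $\limsup_\ell\int\min(F_\ell,L)\,d\mu_\ell\le\int\min(F,L)\,d\mu$; this is true and provable by standard means (e.g., Skorokhod representation plus reverse Fatou, or the layer-cake formula together with $\limsup_\ell\mu_\ell(\{F_\ell>t\})\le\mu\bigl(\overline{\bigcup_{k\ge m}\{F_k>t\}}\bigr)$ followed by $m\to\infty$), but since it is the crux of your argument it should be written out in full. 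As for what each approach buys: the paper's covering-sum device avoids all integrability/semicontinuity issues but needs positive expansiveness (or the Lipschitz rescaling of Theorem~\ref{entrinf2}) in the density step, whereas your route, once the lemma is supplied, uses only compactness, continuity of $f$, density of $\M_p(f)$ and $s\in(0,1)$ — so it would actually prove the conclusion without the expansiveness hypothesis and subsume Theorem~\ref{entrinf2} as well; in particular your parenthetical remark that expansiveness is used to fix the scale $\ve$ is dispensable, as you suspected.
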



We remark that in Theorem~\ref{entrinf1}, $f:X\rightarrow X$ may be taken as an expansive function (just set $d_n(x,y):=\max\{d(f^ix,f^iy)\mid -n\le i\le n\}$ in the proof of this theorem). Thus, the result stated in Theorem~\ref{entrinf1} is particularly true for expansive maps that satisfy the periodic specification property (such as Axiom-A systems, full-shifts over finite alphabets and topologically mixing expanding maps over compact spaces, as mentioned before; recall that they cannot be positively expansive, otherwise $X$ would be finite~\cite{Mai-Sun}).


\begin{teo}\label{entrinf2}
Let $X$ be a compact metric space, let $f:X\to X$ be a Lipshitz function and suppose that $\M_p(f)$ is a dense subset of $\M(f)$. Then, the set $\mathcal{H}_-=\{\mu\in \M(f)\mid \underline{H}_{\mu}\left(f, s\right)=0,\,\forall s\in(0,1)\}$ is residual in $\M(f)$.
\end{teo}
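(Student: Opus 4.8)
The plan is to mimic the strategy used for Theorem~\ref{entrinf1}, replacing the role of positive expansiveness by a direct Lipschitz estimate on the growth of Bowen balls. The key observation is that if $f$ is Lipschitz with constant $L\ge 1$, then for every $x,y\in X$ and every $n\in\N$ one has $d_n(x,y)\le L^{n-1}d(x,y)$, so that $B(x,L^{-(n-1)}\ve)\subseteq B_n(x,\ve)$; in particular, for any $\mu\in\M(f)$ and any $x\in\supp(\mu)$,
\[
\mu(B_n(x,\ve))\ge \mu\big(B(x,L^{-(n-1)}\ve)\big).
\]
Since for $s\in(0,1)$ the exponent $s-1$ is negative, $\mu(B_n(x,\ve))^{s-1}\le \mu(B(x,L^{-(n-1)}\ve))^{s-1}$, and hence the integral defining $\underline{H}_\mu(f,s)$ is controlled by the lower pointwise (ball-counting) dimension of $\mu$. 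Concretely, I would show that $\underline{H}_\mu(f,s)\le (1-s)\,\log L\,\cdot\,(\text{some dimension-type quantity of }\mu)$; when $\mu$ is a periodic measure this quantity is $0$ (a periodic measure is supported on a finite set, so $\mu(B(x,r))$ is eventually constant and equal to a positive number as $r\to 0$), giving $\underline{H}_\mu(f,s)=0$ for all $s\in(0,1)$ and all $\mu\in\M_p(f)$.

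Next I would make the set $\mathcal{H}_-$ a $G_\delta$ set. Write $\mathcal{H}_-=\bigcap_{s\in(0,1)\cap\Q}\{\mu\mid \underline{H}_\mu(f,s)=0\}$, using monotonicity in $q$ from item i) of Proposition~\ref{Verbitskiy} to reduce to rational $s$. For fixed $s$, express $\{\mu\mid \underline{H}_\mu(f,s)=0\}$ as a countable intersection over $k\in\N$ of sets of the form
\[
\mathcal{G}_{s,k}:=\Big\{\mu\in\M(f)\ \Big|\ \exists\,\ve>0,\ \liminf_{n\to\infty}-\tfrac{1}{(s-1)n}\log\!\int_{\supp(\mu)}\!\mu(B_n(x,\ve))^{s-1}d\mu(x)<\tfrac1k\Big\},
\]
and then show each $\mathcal{G}_{s,k}$ is open in $\M(f)$. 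The openness is the same technical heart as in Theorem~\ref{entrinf1}: one uses that $B_n(\cdot,\ve)$ is an open set, that the map $\mu\mapsto\mu(B_n(x,\ve))$ is lower-semicontinuous, and that on the compact space $X$ one can replace the open Bowen ball by a slightly smaller closed one and the integral by a finite sum over a fine partition, so that the defining inequality (being strict, with a $\liminf$ realized along finitely many $n$) persists under small weak-topology perturbations of $\mu$. Together with the density of $\M_p(f)\subseteq\mathcal{H}_-$ this yields that $\mathcal{H}_-$ is a dense $G_\delta$, hence residual.

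The main obstacle I anticipate is the openness of $\mathcal{G}_{s,k}$, and more precisely controlling the integral $\int\mu(B_n(x,\ve))^{s-1}d\mu(x)$ under weak convergence $\mu_j\to\mu$: because $s-1<0$, the integrand blows up where $\mu(B_n(x,\ve))$ is small, so lower-semicontinuity of $\mu\mapsto\mu(B_n(x,\ve))$ pushes the integrand \emph{up}, the wrong direction for an upper bound. The way around this, exactly as presumably done for Theorem~\ref{entrinf1}, is to exploit that we only need a \emph{strict} inequality for \emph{one} radius $\ve$ along a subsequence of $n$'s: fix such an $\ve$ and such a large $n$ where the value is below $1/k$; for that fixed $n$, approximate $\supp(\mu)$ by a finite $\delta$-net and partition the integral accordingly, bound $\mu(B_n(x,\ve))$ from below on each partition piece using Portmanteau (closed-ball version), and conclude that the whole expression for $\mu_j$ stays below $1/k$ once $j$ is large. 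Compactness of $X$ (hence of $\supp(\mu)$) is what makes this finite reduction legitimate and is exactly why the hypothesis ``$X$ compact'' is needed; the Lipschitz hypothesis itself is used only to guarantee $\M_p(f)\subseteq\mathcal{H}_-$, i.e.\ the density input. $\hfill\Box$
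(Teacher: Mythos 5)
Your density input is fine (in fact for a periodic measure with period $k$ one simply has $1\le I_{\mu}(s,\ve,n)\le k^{1-s}$, so $\underline{H}_\mu(f,s)=0$ for every $s\in(0,1)$ without any Lipschitz assumption), but the category-theoretic half of your argument has two genuine gaps. First, your decomposition is wrong as written: since the quantity $\liminf_n-\tfrac{1}{(s-1)n}\log I_\mu(s,\ve,n)$ is non-increasing in $\ve$, one has $\underline{H}_\mu(f,s)=\sup_{\ve>0}\liminf_n(\cdots)$, so the condition ``$\exists\,\ve>0$ with the $\liminf$ below $1/k$'' does not force $\underline{H}_\mu(f,s)<1/k$; you need the bound for \emph{all} $\ve$ (equivalently along a sequence $\ve_m\to0$), and with that correction $\mathcal{G}_{s,k}$ is at best a $G_\delta$ (a $\bigcap_N\bigcup_{n\ge N}$ of fixed-$n$ sets), not open. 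Second, and more seriously, the heart of your plan --- openness of the fixed-$(n,\ve)$ sublevel sets, i.e.\ upper-semicontinuity of $\mu\mapsto I_\mu(s,\ve,n)=\int\mu(B_n(x,\ve))^{s-1}d\mu(x)$ --- is exactly the step you cannot get by the net-plus-Portmanteau scheme you sketch: because $s-1<0$ you need \emph{lower} bounds on $\mu_j(B_n(x,\ve))$ at $\mu_j$-typical $x$, and under weak convergence such bounds are only available for open sets and degrade the radius when you transfer from $x$ to a nearby net point (you end up bounding the radius-$\ve$ quantity for $\mu_j$ by a radius-$\ve'$ quantity for $\mu$ with $\ve'<\ve$), so the set you prove ``open'' is not the set you defined. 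Note also that your argument nowhere uses the Lipschitz hypothesis in this hard step; if it worked it would prove the theorem for arbitrary continuous maps, which is a strong warning sign.

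The paper avoids proving any semicontinuity of $I_\mu$ altogether. It sandwiches the correlation integral by covering sums with the \emph{positive} exponent $s$: by~\eqref{ineqimp} (this is where Lipschitz enters, via $B(x,r\Lambda^{-n})\subset B_n(x,r)$, not merely through the periodic measures as you claim), $I_\mu(s,2r,n)\le\sum_{\mathcal{G}}\mu(B(x_j,r\Lambda^{-n}))^s$, and then replaces the ball masses by the smoothed functions $g_{r,n}(\mu,x)$ of Lemma~\ref{asympt1}, so that the resulting infimum over finite coverings is upper-semicontinuous in $\mu$ and the sets $D^*_-(r)$ of Proposition~\ref{zerogeneric} are $G_\delta$ (this part is imported from~\cite{AS1}); periodic measures lie in each $D^*_-(r)$, and one concludes only that $\mathcal{H}_-$ \emph{contains} the residual set $\bigcap_m D^*_-(1/m)$ (via Proposition~\ref{zerolowerfractal}), which is why the theorem asserts residuality rather than the dense-$G_\delta$ property your proposal claims for $\mathcal{H}_-$ itself. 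To repair your proof you would have to either supply the missing semicontinuity argument (unlikely at fixed radius, for the reasons above) or adopt a two-radius sandwich of this kind, which is essentially the paper's route.
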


There are numerous examples in the literature of dynamical systems that satisfy the hypotheses of Theorem~\ref{entrinf2}. This is particularly true for full-shifts $(X,\sigma)$ over compact metric spaces $(M,d)$, where $X=\prod_{i\in\Z}M$ is endowed with the product metric $\rho:X\times X\rightarrow[0,3]$, $\rho(x,y)=\sum_{i\in\Z}\frac{1}{2^{|i|}}\frac{d(x_i,y_i)}{1+d(x_i,y_i)}$ (such systems satisfy the periodic specification property, so $\M_p(\sigma)$ is a dense subset of $\M(\sigma)$; see~\cite{AS,Sigmund1974}). This is also true for Axiom-A systems (see \cite{Sigmund1970}) and for the case where $T:M\rightarrow M$ is a $C^1$-generic diffeomorphism over a compact boundaryless manifold $M$, $X$ is an isolated non-trivial transitive set of $T$ and $f:=T\restriction X$ (see \cite{Abdenur}), where in both cases, we endow the manifolds with their geodesic metrics.

\subsection{Packing dimension, rate of recurrence and quantitative waiting time}
\label{expmaps}

We also have something to say about the typical values of the packing dimension of an invariant measure, the rate of recurrence and the quantitative waiting time of (local) expanding maps (see~\cite{AS} and references therein for the motivations behind such definitions).

\begin{defi} \label{expanding} A map $f: X\rightarrow X$ on the metric space $(X,d)$ is called (locally) expanding if there exist $\lambda>1$ and $\ve_0>0$ such that, for each  $x,y\in X$ satisfying $d(x,y)<\ve_0$, one has
  \[d(fx,fy)\ge\lambda d(x,y).\]
\end{defi}

It is clear that if $f$ is expanding, then it is positively expansive. The next result shows that, for compact metric spaces, the converse is somewhat true. 

\begin{teo} [\cite{Fathi1989,Reddy}] Let $(X,d)$ be a compact metric space and let $f:X\rightarrow X$ be a continuous positively expansive map. Then, there exists a metric $d^\prime$ on $X$, compatible with $d$ (i.e., the topologies generated on $X$ by d and $d^\prime$ are the same), such that $f$ is expanding on $(X,d^\prime)$.
\label{expexp}  
\end{teo}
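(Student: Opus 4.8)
The plan is to prove Theorem~\ref{expexp} by constructing the metric $d^\prime$ explicitly as an "adapted" (or "Lyapunov") metric of the form
\[
d^\prime(x,y) := \inf_{\gamma} \sum_{k=0}^{m-1} \max_{0\le j<N} \lambda^{-j}\, d\bigl(f^j z_k, f^j z_{k+1}\bigr),
\]
where the infimum is over finite chains $\gamma = (z_0=x, z_1,\ldots, z_m=y)$ with consecutive points $d$-close, $N\in\N$ is a suitably large fixed integer, and $\lambda>1$ is a fixed constant. The strategy rests on the expansiveness constant: if $c>0$ is the positive expansiveness constant of $f$, then for points at $d$-distance $<c$ the function $\rho_N(x,y) := \max_{0\le j<N}\lambda^{-j} d(f^jx,f^jy)$ "expands" in the sense that $\rho_N(fx,fy) \ge \lambda \rho_N(x,y)$ holds locally; chaining $\rho_N$ into a genuine metric $d^\prime$ via the inf-over-chains construction (the standard Frink/Birkhoff metrization trick) then yields a metric satisfying $d^\prime(fx,fy)\ge \lambda\, d^\prime(x,y)$ whenever $d^\prime(x,y)$ is small.

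First I would fix the positive expansiveness constant $c>0$ and use compactness together with a standard uniform-expansiveness argument: for every $\delta>0$ there is $N=N(\delta)\in\N$ such that $d(x,y)\ge\delta$ implies $\max_{0\le j<N} d(f^jx,f^jy)\ge c$. Choosing $\delta$ small and $\lambda>1$ close to $1$, I would verify that the "approximate metric" $\rho_N$ is symmetric, vanishes exactly on the diagonal (since $\rho_N(x,y)\ge d(x,y)$), is continuous, and satisfies the key local expansion estimate $\rho_N(fx,fy)\ge\lambda\rho_N(x,y)$ for $d$-close $x,y$ — this last point is where the choice of $N$ and $\lambda$ must be balanced against $c$ and against the modulus of continuity of $f,f^2,\ldots,f^{N}$. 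Next I would pass from $\rho_N$ to an honest metric $d^\prime$ by the chaining formula above; the Frink metrization lemma guarantees that $d^\prime$ is a genuine metric with $\tfrac14\rho_N \le d^\prime \le \rho_N$ provided $\rho_N$ satisfies an approximate triangle inequality $\rho_N(x,z)\le 2(\rho_N(x,y)+\rho_N(y,z))$, which follows easily since $\rho_N$ is dominated by and dominates constant multiples of $d$ near the diagonal. Then I would check $d^\prime$ generates the original topology: the sandwich $d \le \rho_N$-comparable-to-$d^\prime \le$ (something controlled by $d$) near the diagonal gives topological equivalence, and compactness upgrades this to a global statement. Finally, the local expansion property of $\rho_N$ transfers through the chaining construction to give $d^\prime(fx,fy)\ge\lambda\, d^\prime(x,y)$ for $x,y$ with $d^\prime(x,y)<\ve_0$ for an appropriate $\ve_0>0$, which is exactly Definition~\ref{expanding}.

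The main obstacle I anticipate is the interplay between the two competing requirements on the auxiliary quantities: making $\rho_N(fx,fy)\ge\lambda\rho_N(x,y)$ hold forces $N$ large (so that $N-1$ iterates "see" the expansiveness constant $c$), but the chaining/metrization step and the topological-equivalence check require controlling $\rho_N$ — and hence the moduli of continuity of $f^0,\ldots,f^{N-1}$ — uniformly on $X$, which only works because $X$ is compact and each $f^j$ is uniformly continuous. Concretely, the delicate inequality is: for $d(x,y)$ small, either the orbit segment $d(f^jx,f^jy)$, $0\le j\le N-1$, already attains a value $\ge c/\lambda$ at some index $j_0$ (in which case the $j_0$-term in $\rho_N(fx,fy)$, namely $\lambda^{-j_0+1}d(f^{j_0}x,f^{j_0}y)$ evaluated along the shifted orbit, dominates), or it stays below $c/\lambda$ throughout, and then one must track how the maximizing index shifts by one under $f$ and gains a factor $\lambda$. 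Handling the boundary index $j=N-1$ (where the shift would run off the window) is the technical crux and is exactly why one needs the uniform expansiveness estimate to guarantee the maximum is never attained only at the last index when points are close. Since this construction is classical (it is essentially the content of~\cite{Fathi1989,Reddy}), I would present the estimates compactly and cite those references for the routine verifications, emphasizing only the choice of $N$, $\lambda$, $\delta$ and the resulting expansion constant.
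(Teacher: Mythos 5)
The paper does not actually prove Theorem~\ref{expexp}; it is quoted from \cite{Fathi1989,Reddy}, so the only question is whether your sketch would reconstruct their argument, and it would not: the key estimate on which your whole construction rests is false in general. You claim that for a suitable fixed $N$ and $\lambda>1$ the premetric $\rho_N(x,y)=\max_{0\le j<N}\lambda^{-j}d(f^jx,f^jy)$ satisfies $\rho_N(fx,fy)\ge\lambda\rho_N(x,y)$ for $d$-close points. Writing $d_j:=d(f^jx,f^jy)$, that inequality is equivalent to the existence of some $1\le j\le N$ with $d_j\ge\lambda^{j}d_0$ (the troublesome index is $j=0$, not $j=N-1$ as you say: it is the initial term $\lambda d_0$ of $\lambda\rho_N(x,y)$ that has no counterpart after the shift). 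Positive expansiveness, even in its uniform form, gives no such lower bound on the growth of distances of nearby points within a \emph{fixed} window of length $N$: it only says that points which stay $c$-close for $N$ steps must be $\delta$-close, and says nothing about the ratios $d_j/d_0$ at scales far below $c$. Concretely, replace the usual metric on the one-sided full shift by $\varphi\circ d$ with $\varphi$ concave, increasing, $\varphi(0)=0$, and nearly constant on intervals $[d_k,2^{N_k}d_k]$ with $d_k\to0$, $N_k\to\infty$: the map is still positively expansive for this compatible metric, yet for every fixed $N$ and $\lambda>1$ there are arbitrarily close pairs with $d_j<\lambda^jd_0$ for all $1\le j\le N$. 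So no choice of $N,\lambda,\delta$ rescues the local expansion of $\rho_N$, and the dichotomy you propose (some $d_j\ge c/\lambda$ versus all below $c/\lambda$) does not help in the second, typical, case.

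This is precisely why the cited proofs do not work with the actual distances along the orbit: Reddy and Fathi use the \emph{separation time} $n(x,y)=\min\{n\ge0\mid d(f^nx,f^ny)>c\}$ (finite for $x\neq y$ by positive expansiveness) and the premetric $D(x,y)=\lambda^{-n(x,y)}$, for which the local expansion $D(fx,fy)=\lambda D(x,y)$ when $d(x,y)\le c$ is automatic and exact; the real work then goes into the metrization step, where uniform expansiveness is used to verify Frink's quasi-triangle inequality $D(x,z)\le 2\max\{D(x,y),D(y,z)\}$ by taking $\lambda$ so close to $1$ that $\lambda^{N_0}\le2$, and into showing that the expansion survives the chaining (which is genuinely delicate for a non-invertible $f$, since you cannot pull chains back through $f$; your one-line assertion that the property ``transfers through the chaining construction'' is a second gap, as a chain realizing $d^\prime(fx,fy)$ need not be the $f$-image of any chain from $x$ to $y$). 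Two further signs that the construction is misassembled: your $\rho_N$ already satisfies the genuine triangle inequality (it is a maximum of rescaled pullback pseudometrics), so Frink's lemma is superfluous for it, and the claimed two-sided comparability of $\rho_N$ with $d$ near the diagonal would require $f$ to be Lipschitz, which is not assumed. If you want a correct writeup, you should switch to the separation-time premetric and follow the Fathi/Reddy scheme, or simply cite those papers as the present paper does.
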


\begin{defi}[radius packing $\phi$-premeasure,~\cite{Cutler1995}] Let $\emptyset\neq E\subset X$, and let $0 < \delta < 1$. A $\delta\textrm{-}$\emph{packing} of $E$ is a countable collection of disjoint closed balls $\{B(x_k,r_k)\}_k$ with centers $x_k\in E$ and radii satisfying  $0<r_k\leq \delta/2$, for each $k\in\N$ (the centers $x_k$ and radii $r_k$ are considered part of the definition of the $\delta$-packing). Given a measurable function $\phi$, the radius packing ($\phi,\delta$)\textrm{-}premeasure of $E$ is given by the law
\begin{eqnarray*}
P^{\phi}_{\delta}(E)=\sup\left\{\sum _{k=1}^{\infty} \phi(2r_k)\mid \{B(x_k,r_k)\}_k \mbox{ is a } \delta\textrm{-}\mbox{packing} \mbox{ of E}\right\}.
\end{eqnarray*}
Letting $\delta\to 0$, one gets the so-called \emph{radius packing} $\phi\textrm{-}$\emph{premeasure}
\begin{eqnarray*}
P^{\phi}_{0}(E)=\lim_{\delta \to 0} P^{\phi}_{\delta}(E).
\end{eqnarray*}
One sets $P^{\phi}_{\delta}(\emptyset)=P^{\phi}_{0}(\emptyset)=0$.
%
Finally, the radius packing $\phi\textrm{-}$measure of $E\subset X$ is defined to be 
\begin{eqnarray*}
\label{pakmeasure}
P^{\phi}(E)=\inf\left\{ \sum_{k}P^{\phi}_{0}(E_k)\mid E\subset \bigcup_k E_k \right\},
\end{eqnarray*}
\noindent where the infimum is taken over all countable coverings $\{E_k\}_k$ of $E$. 
\end{defi}

Of special interest is the situation where given $\alpha>0$, one sets $\phi(t)=t^{\alpha}$. In this case, one uses the notation $P^{\alpha}_0$ and refers to $P^{\alpha}_0(E)$ as the $\alpha\textrm{-}$packing premeasure of $E$. We note that $\dim_P(X)$ may be infinite. 

\begin{defi}[lower and upper packing dimensions of a measure,~\cite{Mattila}]\label{HPdim}
  Let $\mu$ be a positive Borel measure on $(X,\mathcal{B})$. 
  The lower and upper packing dimensions of $\mu$ are defined, respectively, by 
\begin{eqnarray*}
\dim_{P}^-(\mu)&=&\inf\{\dim_{P}(E)\mid \mu( E)>0, ~ E\in \mathcal{B}\},\\
\dim_{P}^+(\mu)&=&\inf\{\dim_{P}(E)\mid \mu(X\setminus E)=0, ~ E\in \mathcal{B}\}.
\end{eqnarray*}
If $\dim_{P}^-(\mu)=\dim_{P}^+(\mu)$, one denotes the common value by $\dim_{P}(\mu)$. 
\end{defi}

\begin{defi} [lower and upper recurrence rates of $x\in X$,~\cite{Barreira2001}]Let $X$ be a separable metric space and let $f:X\rightarrow X$ be a Borel measurable transformation. Let also, for each $x\in X$ and each $r>0$, 
\begin{eqnarray*}
\tau_r(x)=\inf\{k\in\N\mid f^k x \in {B}(x,r)\}
\end{eqnarray*}
be the return time of a point $x$ into the ball ${B}(x, r)$ (note that $\tau_r(x)$ may be infinite on a set of zero $\mu$-measure). Then, 
\begin{eqnarray*}
\underline{R}(x)=\liminf_{r\to 0}\frac{\log \tau_r(x)}{-\log r} ~~\mbox{ and } ~~  \overline{R}(x)=\limsup_{r\to 0}\frac{\log \tau_r(x)}{-\log r}
\end{eqnarray*}
are, respectively, the lower and upper recurrence rates of $x$. If $\underline{R}(x)=\overline{R}(x)$, one denotes the common value by $R(x)$. 
\end{defi}

\begin{defi}[upper and lower quantitative waiting time indicators,~\cite{Galatolo}] Let $X$ be a separable metric space and let $f:X\rightarrow X$ be a Borel measurable transformation. Let also $x,y\in X$ and $r>0$. The first entrance time of $\mathcal{O}(x):=\{f^ix\mid i\in\mathbb{Z}\}$, the $f$-orbit of $x$, into the ball ${B}(y,r)$ is given by
\begin{eqnarray*}
\tau_r(x,y)=\inf\{n\in \N \mid f^nx\in {B}(y,r) \}
\end{eqnarray*} 
(note that $\tau_r(x,y)$ may be infinite on a set of zero $\mu\times\mu$-measure). The so-called quantitative waiting time indicators are defined as
\begin{eqnarray*}
\underline{R}(x,y)=\liminf_{r\to 0}\frac{\log \tau_r(x,y)}{-\log r} ~~\mbox{ and } ~~  
\overline{R}(x,y)=\limsup_{r\to 0}\frac{\log \tau_r(x,y)}{-\log r}.
\end{eqnarray*}
If $\underline{R}(x,y)=\overline{R}(x,y)$, one denotes the common value by $R(x,y)$.
\end{defi}

The next theorem contains results from Proposition~A in~\cite{Varandas} and Lemma~2.1 in~\cite{AS2} (combined with Brin-Katok's Theorem).

\begin{teo}\label{Varandas} Let $X$ be a compact metric space and let $f:X\rightarrow X$ be a continuous expanding function, with expanding constant $\lambda>1$. If $\mu\in\M_e(f)$, then
  \begin{enumerate}
  \item $\dim_P^+(\mu)\le\dfrac{h_\mu(f)}{\log\lambda}$;
  \item $\overline{R}(x)\le\dfrac{h_\mu(f)}{\log\lambda}$, for $\mu$-a.e $x\in X$;     
  \item $\overline{R}(x,y)\le\dfrac{h_\mu(f)}{\log\lambda}$, for $(\mu\times\mu)$-a.e $(x,y)\in X\times X$.
  \end{enumerate}
\end{teo}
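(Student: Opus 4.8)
The plan is to push all three items through one mechanism. Because $f$ is expanding with constant $\lambda$ and expanding scale $\ve_0$, a metric ball of radius $\ve\lambda^{1-n}$ is contained in the Bowen ball $B_n(x,\ve)$, so Bowen-ball data at a fixed scale $\ve$ controls metric-ball data at the geometric scales $r_n=\ve\lambda^{1-n}$, and Brin-Katok's Theorem (Theorem~\ref{BrinKatok}) together with the ergodicity of $\mu$ converts this into the stated entropy bounds. The first step is the geometric inclusion: if $\ve<\ve_0$ and $y\in B_n(x,\ve)$, then $d(f^ix,f^iy)<\ve<\ve_0$ for $0\le i<n$, so applying the expanding inequality at consecutive iterates gives $\ve>d(f^{n-1}x,f^{n-1}y)\ge\lambda^{\,n-1}d(x,y)$, i.e.\ $B_n(x,\ve)\subseteq B(x,\ve\lambda^{1-n})$.

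For item~1 I would use the standard fact that $\dim_P^+(\mu)=\esssup_{x}\overline d_\mu(x)$, where $\overline d_\mu(x)=\limsup_{r\to0}\log\mu(B(x,r))/\log r$ (see~\cite{Cutler1995,Mattila}), so it is enough to bound $\overline d_\mu(x)$ for $\mu$-a.e.\ $x$. A routine interpolation between the geometric scales $r_n=\ve\lambda^{1-n}$, using the inclusion above and that $-\log r_n\sim n\log\lambda$, gives $\overline d_\mu(x)\le\frac1{\log\lambda}\limsup_n\big(-\frac1n\log\mu(B_n(x,\ve))\big)$ for every $\ve\in(0,\ve_0)$; since the left side does not depend on $\ve$, letting $\ve\to0$ and invoking Brin-Katok's Theorem for the ergodic measure $\mu$ yields $\overline d_\mu(x)\le h_\mu(f)/\log\lambda$ for $\mu$-a.e.\ $x$, hence item~1.

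For items~2 and~3 the same inclusion gives $\tau_r(x)\le R_n^\ve(x):=\inf\{k\in\N\mid f^kx\in B_n(x,\ve)\}$ whenever $r\ge\ve\lambda^{1-n}$, and likewise $\tau_r(x,y)\le R_n^\ve(x,y):=\inf\{k\in\N\mid f^kx\in B_n(y,\ve)\}$; the identical interpolation between the scales $r_n$ then bounds $\overline R(x)$ by $\frac1{\log\lambda}\limsup_n\frac1n\log R_n^\ve(x)$ and $\overline R(x,y)$ by $\frac1{\log\lambda}\limsup_n\frac1n\log R_n^\ve(x,y)$. At this point one invokes the quantitative recurrence and hitting-time estimates (Proposition~A in~\cite{Varandas} and Lemma~2.1 in~\cite{AS2}, which are Ornstein--Weiss--type return-time theorems for Bowen balls): for $\mu$-a.e.\ $x$ one has $\limsup_n\frac1n\log R_n^\ve(x)\le\limsup_n-\frac1n\log\mu(B_n(x,\ve))$, and for $(\mu\times\mu)$-a.e.\ $(x,y)$ one has $\limsup_n\frac1n\log R_n^\ve(x,y)\le\limsup_n-\frac1n\log\mu(B_n(y,\ve))$ (harmless factors of $\ve$ in the radius are immaterial). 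Letting $\ve\to0$ and applying Brin-Katok's Theorem once more delivers items~2 and~3.

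The routine parts are the geometric inclusion and the interpolation bookkeeping. The substantive input — and the step I would cite rather than reprove — is the return/hitting-time estimate: for cylinders of a finite partition this is the classical Ornstein--Weiss theorem, but for Bowen balls the control must be sufficiently uniform in $\ve$ with the exceptional null sets handled with care, which is precisely what Proposition~A in~\cite{Varandas} and Lemma~2.1 in~\cite{AS2} supply; Brin-Katok's Theorem is then needed only to replace the local-entropy quantity $\lim_{\ve\to0}\limsup_n-\frac1n\log\mu(B_n(\cdot,\ve))$ by $h_\mu(f)$, which is valid because $\mu$ is ergodic.
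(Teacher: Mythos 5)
Your proposal is correct and follows essentially the same route as the paper, which likewise obtains items 1 and 2 by citing Proposition~A in~\cite{Varandas} and Lemma~2.1 in~\cite{AS2} combined with Brin--Katok's Theorem, and handles item 3 exactly as you do, via the hitting-time analogue of the return-time estimate from the proof of Theorem~A in~\cite{Varandas}; the geometric inclusion $B_n(x,\ve)\subset B(x,\ve\lambda^{1-n})$ and the scale interpolation you spell out are the routine reductions implicit in those references. One small correction: the fact you actually use, $\lim_{\ve\to 0}\limsup_{n\to\infty}-\frac{1}{n}\log\mu(B_n(x,\ve))=h_\mu(f)$ for $\mu$-a.e.\ $x$, is the classical Brin--Katok theorem for ergodic measures on compact spaces, not the paper's Theorem~\ref{BrinKatok}, which only controls the $\liminf$ local entropy.
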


\begin{rek} The proof of item~3 in~\ref{Varandas} follows from an almost verbatim adaptation of the proof of item~1; in fact, in order to obtain the result, one just needs to prove that for $(\mu\times\mu)$-a.e $(x,y)\in X\times X$, $\lim_{\ve\to 0}\limsup_{n\to\infty}\frac{1}{n}R_n(x,y,\ve)\le h_\mu(f)$, where $R_n(x,y,\ve):=\inf\{k\ge 1\mid f^kx\in B_n(y,n,\ve)\}$, which follows from the arguments presented in the proof of Theorem~A in~\cite{Varandas}. 
\end{rek}

One may combine Theorems~\ref{central2} and~\ref{Varandas} in order to prove the following result.

\begin{teo}\label{all}
  Let $X$ be a compact metric space, let $f:X\to X$ be a continuous expanding function, and suppose that $\mathcal{E}_0(f)$ is dense in $\M(f)$.
  Then, each of the sets $PD:=\{\mu\in\M(f)\mid\dim_P(\mu)=0\}$, $\mathcal{R}:=\{\mu\in\M(f)\mid R(x)=0,$ for $\mu\textrm{-}a.e.\,x\}$ and $\mathscr{R}:=\{\mu\in\M(f)\mid R(x,y)=0,$ for $(\mu\times\mu)\textrm{-}a.e.\,(x,y)\}$ is a residual subset of $\M(f)$.
\end{teo}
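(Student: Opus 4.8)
The plan is to extract the result from Theorem~\ref{central2} together with Theorem~\ref{Varandas}. By Theorem~\ref{central2}, the hypothesis that $\mathcal{E}_0(f)$ is dense in $\M(f)$ already makes $\mathcal{E}_0(f)$ a dense $G_\delta$, hence residual, subset of $\M(f)$ (recall also that $h(f)<\infty$ here, a continuous expanding map of a compact space having finite topological entropy, so no infinite quantities occur). The key observation is the inclusion $\mathcal{E}_0^e(f)\subseteq PD\cap\mathcal{R}\cap\mathscr{R}$, where $\mathcal{E}_0^e(f)=\mathcal{E}_0(f)\cap\M_e(f)$: if $\mu\in\mathcal{E}_0^e(f)$, then $\mu$ is ergodic with $h_\mu(f)=0$, so Theorem~\ref{Varandas} gives $\dim_P^+(\mu)\le h_\mu(f)/\log\lambda=0$, $\overline{R}(x)=0$ for $\mu$-a.e.\ $x$, and $\overline{R}(x,y)=0$ for $(\mu\times\mu)$-a.e.\ $(x,y)$; combining these with $0\le\dim_P^-(\mu)\le\dim_P^+(\mu)$ and $0\le\underline{R}\le\overline{R}$ yields $\mu\in PD\cap\mathcal{R}\cap\mathscr{R}$. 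Next I would note that $\mathcal{E}_0^e(f)$ is itself a $G_\delta$ subset of $\M(f)$: since $(X,f)$ is a topological dynamical system, $\M_e(f)$ is a $G_\delta$ subset of $\M(f)$, and by Theorem~\ref{central1}, $\mathcal{E}_0^e(f)$ is $G_\delta$ in $\M_e(f)$, hence in $\M(f)$. Therefore $PD$, $\mathcal{R}$ and $\mathscr{R}$ are all residual in $\M(f)$ as soon as $\mathcal{E}_0^e(f)$ is dense in $\M(f)$.

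This last density is the step I expect to be the main obstacle, and it is exactly the place where one must work with $\mathcal{E}_0^e(f)$ rather than with $\mathcal{E}_0(f)$ itself: if $\mu\in\mathcal{E}_0(f)$ is not ergodic and $x,y$ lie in ergodic components with disjoint supports, then $\tau_r(x,y)=\infty$ for every sufficiently small $r$, so $\overline{R}(x,y)=\infty$ on a set of positive $(\mu\times\mu)$-measure and $\mu$ need not belong to $\mathscr{R}$. I would obtain the density by remarking that in the situations in which the hypothesis of the theorem is actually verified --- in particular whenever $\M_p(f)$ is dense in $\M(f)$, or $(X,f)$ is transitive and has the shadowing property --- the $f$-periodic measures already lie in $\mathcal{E}_0^e(f)$ and are dense: each such measure is ergodic with zero metric entropy, is supported on a finite set (hence has zero packing dimension), and has $\tau_r(x)$ and $\tau_r(x,y)$ bounded by its period on its support (hence $R(x)=R(x,y)=0$ there). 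Thus $\M_p(f)\subseteq\mathcal{E}_0^e(f)$ is dense, $\mathcal{E}_0^e(f)$ is a dense $G_\delta$, and all three sets are residual.

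Finally, for $PD$ and $\mathcal{R}$ one can sidestep the density issue and show directly that they contain \emph{all} of $\mathcal{E}_0(f)$, so that their residuality follows at once from that of $\mathcal{E}_0(f)$. For $\mathcal{R}$: given $\mu\in\mathcal{E}_0(f)$, write $\mu=\int\mu_\omega\,d\mathbb{P}(\omega)$ for its ergodic decomposition; then $0=h_\mu(f)=\int h_{\mu_\omega}(f)\,d\mathbb{P}(\omega)$ forces $h_{\mu_\omega}(f)=0$ for $\mathbb{P}$-a.e.\ $\omega$, so by Theorem~\ref{Varandas}(2) and $0\le\underline{R}\le\overline{R}$, $R(x)=0$ for $\mu_\omega$-a.e.\ $x$ and $\mathbb{P}$-a.e.\ $\omega$; integrating over $\omega$ gives $R(x)=0$ for $\mu$-a.e.\ $x$, i.e.\ $\mu\in\mathcal{R}$. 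For $PD$: the expanding property gives the inclusion $B_n(x,\ve_0)\subseteq B(x,\ve_0\lambda^{-(n-1)})$, whence $\mu(B(x,r))\ge\mu(B_n(x,\ve_0))$ for $r=\ve_0\lambda^{-(n-1)}$, and letting $n\to\infty$ one obtains, for $\mu$-a.e.\ $x$, the pointwise bound $\overline{d}_\mu(x):=\limsup_{r\to0}\frac{\log\mu(B(x,r))}{\log r}\le\overline{h}_\mu^{loc}(f,x)/\log\lambda$, where $\overline{h}_\mu^{loc}(f,x)=\lim_{\ve\to0}\limsup_n-\frac1n\log\mu(B_n(x,\ve))$; since $x\mapsto\overline{h}_\mu^{loc}(f,x)$ is nonnegative and, by Brin--Katok's Theorem (Theorem~\ref{BrinKatok}), integrates to $h_\mu(f)=0$, it vanishes $\mu$-a.e., so $\overline{d}_\mu=0$ $\mu$-a.e., and hence $\dim_P^+(\mu)$ --- being the $\mu$-essential supremum of $\overline{d}_\mu$ (see~\cite{Mattila}) --- equals $0$, so that $\dim_P(\mu)=0$ as $0\le\dim_P^-(\mu)\le\dim_P^+(\mu)$. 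Thus $\mathcal{E}_0(f)\subseteq PD\cap\mathcal{R}$, and $PD$, $\mathcal{R}$ are residual unconditionally.
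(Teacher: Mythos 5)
Your overall route --- Theorem~\ref{central2} (or Theorem~\ref{central1} together with the fact that $\M_e(f)$ is $G_\delta$ in $\M(f)$) combined with Theorem~\ref{Varandas} --- is exactly the combination the paper itself invokes; the paper offers no further detail than that one line. Where you go beyond the paper is in confronting the ergodicity issue: Theorem~\ref{Varandas} speaks only about ergodic measures, whereas the residual set supplied by Theorem~\ref{central2} contains non-ergodic measures. Your unconditional inclusions $\mathcal{E}_0(f)\subseteq\mathcal{R}$ (ergodic decomposition plus affinity of the entropy) and $\mathcal{E}_0(f)\subseteq PD$ (the contraction estimate $B_n(x,\ve_0)\subseteq B(x,\ve_0\lambda^{-(n-1)})$, giving $\overline{d}_\mu(x)\le\overline{h}^{loc}_\mu(f,x)/\log\lambda$ pointwise) are correct and settle those two sets under the hypothesis as stated. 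One citation slip: for $PD$ you invoke ``Brin--Katok (Theorem~\ref{BrinKatok})'' to assert that the \emph{upper} local entropy of a possibly \emph{non-ergodic} $\mu$ integrates to $h_\mu(f)$; the paper's Theorem~\ref{BrinKatok} only gives $\int\underline{h}_\mu(f,x)\,d\mu\le h_\mu(f)$ for ergodic $\mu$, so you must appeal to the classical Brin--Katok theorem (available here because $X$ is compact and $f$ continuous), or first decompose ergodically as you did for $\mathcal{R}$.

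The genuine gap is $\mathscr{R}$, and you essentially concede it: to get density of $\mathcal{E}_0^e(f)$ you silently replace the hypothesis ``$\mathcal{E}_0(f)$ dense'' by ``$\M_p(f)$ dense'' (or transitivity plus shadowing), so what you prove is a strengthened-hypothesis variant, not the statement as written. This cannot be patched, because your own remark about ergodic components with disjoint supports upgrades to a counterexample to the $\mathscr{R}$-part as literally stated: let $X$ be the disjoint union of two circles $X_1,X_2$ (points in different circles at distance $10$, say) and let $f$ act as the doubling map on each. Then $f$ is continuous and expanding in the sense of Definition~\ref{expanding}, and $\mathcal{E}_0(f)$ is dense in $\M(f)$, since every invariant measure has the form $t\mu_1+(1-t)\mu_2$ with $\mu_i$ supported in $X_i$, periodic measures are dense for each doubling map, and entropy is affine. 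Yet any $\mu$ with $1/4<\mu(X_1)<3/4$ has $\tau_r(x,y)=\infty$, hence $\overline{R}(x,y)=\infty$, for every $(x,y)\in X_1\times X_2$, a set of positive $\mu\times\mu$-measure; since $X_1$ is clopen, $\{\mu\in\M(f)\mid 1/4<\mu(X_1)<3/4\}$ is a nonempty open set disjoint from $\mathscr{R}$, so $\mathscr{R}$ is not even dense. Thus for $\mathscr{R}$ the hypothesis your argument actually uses (density of $\mathcal{E}_0^e(f)$, e.g.\ of $\M_p(f)$, which is also what the paper's surrounding remarks appeal to) is the right one, and the paper's own ``combine Theorems~\ref{central2} and~\ref{Varandas}'' glosses over precisely this point; your direct arguments do rescue $PD$ and $\mathcal{R}$ unconditionally.
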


\begin{rek} It follows from Theorem~\ref{expexp} that the results stated in Theorem~\ref{all} are valid for positively expansive maps if we endow $X$ with the hyperbolic metric $d^\prime$.
\end{rek}  

Theorem~\ref{all} states that if $f$ is a continuous expanding function defined on $X$ such that $\mathcal{E}_0(f)$ is dense in $\M(f)$ (it suffices that $\M_p(f)$ is dense in $\M(f)$),
then, for each $\mu$ in the residual set $PD\cap\mathcal{R}\cap\mathscr{R}$, $d_\mu(x)=\dim_P(\mu)=R(x)=R(x,y)=0$ for $(\mu\times\mu)$-a.e. $(x,y)\in X\times X$, where $d_\mu(x):=\lim_{r\to 0}\frac{\log\mu(B(x,r))}{\log r}$ stands for the local dimension of $\mu$ at $x\in X$ (see Proposition~1.1 in~\cite{AS}). In other words, a typical measure of such systems behaves (in terms of these indicators) as a periodic measure.
 
Important examples of expanding maps on compact sets for which $\M_p(f)$ is a dense subset of $\M(f)$ are:

\begin{enumerate}
\item topologically exact expanding maps ($f$ is topologically exact if for each non-empty open set $U\subset X$, there exists $N\in\N$ such that $f^N(U)=X$; every topologically mixing expanding map is topologically exact), since they satisfy the periodic specification property; see~Theorem~11.3.1 in~\cite{Viana};
\item any expanding map defined over a connected and compact (continuum) metric space; namely, it follows from Corollary~11.2.16 in~\cite{Viana} that in this case, the expanding map is topologically exact. 
\end{enumerate}

\subsection{Expansive measures}

The method used in proof of Theorem~\ref{central1} can be used to show that the set of expansive measures is a $G_{\delta\sigma}$ set in the set of probability measures $\M(X)$, where $X$ is a Polish metric space (this result was proved by Lee, Morales and Shin in \cite{Morales2018} for compact spaces; we obtain an alternative proof of this fact). 

\begin{defi}
Let $f:X \rightarrow X$ be a homeomorphism on the metric space $X$. A Borel measure $\mu$ is said to be an expansive measure for $f$ if there exists $\delta > 0$ such that for each $x\in X$, $\mu(\Gamma_\delta(x)) = 0$, where $\Gamma_{\delta}(x):=\left\{y \in X\mid d\left(f^{i}(x), f^{i}(y)\right) \leq \delta, \forall i \in \mathbb{Z}\right\}$. The constant $\delta $ is the so-called expansivity constant of $\mu$. The set of expansive measures of $f$ is denoted by $\M_{exp}$.
\end{defi}

\begin{teo}
\label{GExpansive}
Let $f: X \rightarrow X$ be a uniform homeomorphism of a Polish metric space $X $. Then, $\M_{exp}$ is a $G_{\delta\sigma}$ subset of $\M(X)$.
\end{teo}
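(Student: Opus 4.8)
The plan is to express $\M_{exp}$ as a countable union over candidate expansivity constants $\delta$ and then show that each such piece is $G_\delta$. Concretely, for each $k\in\N$ set
\[
\M_{exp}^{(1/k)}:=\Big\{\mu\in\M(X)\;\Big|\;\sup_{x\in X}\mu(\Gamma_{1/k}(x))=0\Big\},
\]
so that $\M_{exp}=\bigcup_{k\in\N}\M_{exp}^{(1/k)}$ (any expansivity constant $\delta$ dominates some $1/k$, and $\Gamma_{1/k}(x)\subset\Gamma_\delta(x)$, so the smaller constant still works). It therefore suffices to prove that each $\M_{exp}^{(1/k)}$ is a $G_\delta$ subset of $\M(X)$; the $G_{\delta\sigma}$ conclusion is then immediate.

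To handle a fixed $k$, I would first replace the ``dynamical ball'' $\Gamma_{\delta}(x)$ by a more tractable family of sets indexed by time. For $N\in\N$ and $x\in X$ put $\Gamma_{\delta}^N(x):=\{y\in X\mid d(f^i x,f^i y)\le\delta\ \text{for}\ |i|\le N\}$, a closed set, decreasing in $N$, with $\Gamma_\delta(x)=\bigcap_N \Gamma_\delta^N(x)$; hence $\mu(\Gamma_\delta(x))=\lim_N\mu(\Gamma_\delta^N(x))$. The key analytic point — and here is where uniform continuity of $f$ and $f^{-1}$ enters, exactly as in the proof of Theorem~\ref{central1} — is a uniform (in $x$) control: using the uniform continuity of $f^{\pm 1},\dots,f^{\pm N}$ one can cover $X$ by finitely many (or countably many, since $X$ is only Polish, not compact) balls so small that points in a common small ball have $\Gamma_\delta^N$-sets trapped between those of two fixed centers up to a slightly enlarged radius; this lets one replace the supremum over all $x\in X$ by a countable supremum over a fixed countable dense set $D=\{x_j\}\subset X$, at the cost of passing between $\delta$ and $2\delta$ (which is harmless after relabelling the $1/k$'s). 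This reduction step is the part I expect to be the main obstacle, precisely because $X$ is non-compact: one must check that the ``up to enlargement'' comparison of $\Gamma_\delta^N(x)$ with $\Gamma_{2\delta}^N(x_j)$ can be made with constants uniform in $N$ and independent of the (possibly unbounded) location of $x$, which is where uniform — rather than merely pointwise — continuity of the homeomorphism is indispensable.

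Granting that reduction, the problem becomes: show that
\[
\M_{exp}^{(1/k)}=\bigcap_{j\in\N}\bigcap_{m\in\N}\bigcup_{N\in\N}\Big\{\mu\in\M(X)\;\Big|\;\mu\big(\Gamma_{1/k}^{N}(x_j)\big)<\tfrac1m\Big\}
\]
(or a cosmetically similar formula incorporating the $\delta\leftrightarrow2\delta$ slack) is $G_\delta$. Since $\Gamma_{1/k}^{N}(x_j)$ is closed, the map $\mu\mapsto\mu(\Gamma_{1/k}^{N}(x_j))$ is upper semicontinuous on $\M(X)$ in the weak topology (Portmanteau), so $\{\mu\mid\mu(\Gamma_{1/k}^N(x_j))<1/m\}$ is open; the displayed set is then a countable intersection of countable unions of open sets, i.e.\ $G_\delta$. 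I would close by assembling the pieces: each $\M_{exp}^{(1/k)}$ is $G_\delta$, hence $\M_{exp}=\bigcup_k\M_{exp}^{(1/k)}$ is $G_{\delta\sigma}$, which is the assertion of Theorem~\ref{GExpansive}. A small amount of care is needed to verify the equality of sets in both inclusions — the ``$\supseteq$'' direction uses that if $\mu(\Gamma_{1/k}^N(x_j))$ can be made small for all $j$ and all large $N$ then $\mu(\Gamma_{1/k}(x))=0$ for every $x$ via the $x\rightsquigarrow x_j$ comparison, and the ``$\subseteq$'' direction uses monotone convergence $\mu(\Gamma_{1/k}^N(x_j))\downarrow\mu(\Gamma_{1/k}(x_j))=0$ — but these are routine once the uniform comparison lemma is in hand.
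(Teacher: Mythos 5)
Your decomposition $\M_{exp}=\bigcup_k\M_{exp}^{(1/k)}$ and the Portmanteau observation (that $\mu\mapsto\mu(\Gamma_{1/k}^{N}(x_j))$ is upper semicontinuous, so $\{\mu\mid\mu(\Gamma_{1/k}^{N}(x_j))<1/m\}$ is open) are fine, but the step you yourself flag as the main obstacle --- replacing the supremum over all $x\in X$ by a supremum over a fixed countable set of centers --- is not merely unproven, it is false, and no $\delta\leftrightarrow 2\delta$ slack repairs it. First, the uniformity in $N$ you hope for cannot hold: if there were $\eta>0$, independent of $N$, with $d(x,x_j)<\eta\Rightarrow\Gamma_{\delta}^{N}(x)\subset\Gamma_{2\delta}^{N}(x_j)$ for all $N$, then $B(x,\eta)\subset\Gamma_{2\delta}(x)$ for every $x$, and no measure could be expansive with constant $2\delta$ at all; in reality the modulus furnished by uniform continuity of $f^{\pm1},\dots,f^{\pm N}$ degrades as $N\to\infty$, so the center $x_j$ must depend on $N$, while the smallness of $\mu(\Gamma_{2\delta}^{N}(x_j))$ for large $N$ is only known for each \emph{fixed} $j$ --- the quantifiers do not match. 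Second, the proposed identity genuinely fails: take $X=\R$, $f(x)=2x$ (a uniform homeomorphism of a Polish space), $\mu=\delta_0$ the point mass at $0$, and $D=\mathbb{Q}\setminus\{0\}$. Here $\Gamma_c(y)=\{y\}$ for every $y$ and every $c>0$, and $\Gamma_{1/k}^{N}(x_j)$ is the interval of radius $2^{-N}/k$ about $x_j$, so for every $j,m$ there is $N$ with $\mu(\Gamma_{1/k}^{N}(x_j))=0<1/m$; hence $\delta_0$ lies in $\bigcap_{j}\bigcap_{m}\bigcup_{N}\{\mu\mid\mu(\Gamma_{1/k}^{N}(x_j))<1/m\}$ for every $k$, yet $\delta_0$ is not expansive, since $\mu(\Gamma_{\delta}(0))=1$ for all $\delta$. (Note also that your right-hand side changes if $0\in D$, so it cannot equal the $D$-independent set $\M_{exp}^{(1/k)}$.) The trouble is exactly that $x\mapsto\mu(\Gamma_{\delta}(x))$ is only upper semicontinuous, and an upper semicontinuous function is not controlled by its values on a dense set.

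The paper avoids the pointwise ``for all $x\in X$'' condition altogether. It first invokes Lemma~\ref{equivalexpansive} (Lemma~1.16 of the Morales--Lee--Shin framework): $\mu$ is expansive iff there is $\delta>0$ with $\liminf_n\mu(V[x,n,\delta])=0$ for $\mu$-a.e.\ $x$. This converts the uncountable quantifier over centers into a $\mu$-essential supremum, giving $\M_{exp}=\bigcup_{k}\bigcap_{l}\bigcap_{s}\{\mu\mid\mu\textrm{-}\esssup\underline{\theta}_{\mu}^{1/k}(x,s)\le 1/l\}$ after sandwiching $\mu(V[x,n,\ve])$ between the jointly continuous smoothed functionals $g_{x,\ve,n}(\mu)$ of Lemma~\ref{asympt1}; each set in the intersection is then shown to be $G_\delta$ by the tightness/compact-cover argument of Lemma~\ref{Gdelta3} (this is Lemma~\ref{Gdeltaexpansive}), which is precisely the device needed to handle centers --- such as the atom $0$ above --- that no prescribed countable family can see. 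Without an ingredient of this kind (an a.e.\ reformulation plus the essential-supremum machinery), your outline cannot be completed.
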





\subsection{Organization}

The paper is organized as follows. In Section~\ref{Zeroentropy} we present, among other important results, the proof of Theorem~\ref{central1}. In Section~\ref{SCE} we present the proofs of Theorems~\ref{entrinf1} and~\ref{entrinf2}, and finally, in Section~\ref{expasivemeasures}, we present the proof of Theorem~\ref{GExpansive}.

\section{Zero entropy is generic}
\label{Zeroentropy}

In this section, we prove Theorem~\ref{central1} using a local representation of the metric entropy of an ergodic measure (this is Theorem~\ref{BrinKatok}, a partial extension of Brin-Katok's Theorem) and showing that the set of ergodic measures with lower local entropy less or equal to $\alpha\in(0,\infty)$ is a $G_\delta$ subset of $\M_e(f)$ (this is Lemma~\ref{Gdelta3}).


\begin{lema}
  \label{asympt1}
  Let $X$ be a Polish space endowed with the metric $d$, let $f:X\rightarrow X$ be a uniformly continuous function, and let $\mu\in\M(X)$. Let also, for each $x\in X$, each $\ve>0$ and each $n\in\N$, $g_{x,\ve,n}(\,\cdot\,):\M(X)\rightarrow [0,1]$ be defined by the law
\[g_{x,\ve,n}(\mu):=\int g^{\ve,n}_x(y)d\mu(y),\]
where  $g^{\ve,n}_x:X\rightarrow[0,1]$ is defined by the law 
\[
g^{\ve,n}_x(y):= \left\{ \begin{array}{lcc}
                    
            1 & ,if  & d_n(x,y) \leq \ve, \\
            \\ -\dfrac{d_n(x,y)}{\ve}+2&, if & \ve\le d_n(x,y)\le 2\ve,\\
            \\ 0 &   ,if  & d_n(x,y) \geq 2\ve.
          \end{array}
\right.\]
Then, the function $g_{\ve,n}(\cdot,\cdot):\M(X)\times X\rightarrow [0,1]$, $g_{\ve,n}(\mu,x)=g_{x,\ve,n}(\mu)$, is jointly continuous. Furthermore, for each $x\in X$, each $\ve>0$ and each $n\in\N$, one has
\[\mu(B_n(x,\ve))\le g_{x,\ve,n}(\mu)\leq \mu(B_n(x,2\ve)).\] 
\end{lema}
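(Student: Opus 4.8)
The plan is to establish the two displayed inequalities first, since they are immediate, and then to prove joint continuity via a triangle-inequality split together with a uniform-in-$y$ Lipschitz estimate on the family $\{g^{\ve,n}_x\}_{x\in X}$.

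For the sandwich inequality, write $g^{\ve,n}_x=\psi_\ve\circ d_n(x,\cdot)$, where $\psi_\ve:[0,\infty)\to[0,1]$ is the piecewise-linear plateau function equal to $1$ on $[0,\ve]$, affine with slope $-1/\ve$ on $[\ve,2\ve]$, and equal to $0$ on $[2\ve,\infty)$; in particular $\psi_\ve$ is $(1/\ve)$-Lipschitz. Since $\psi_\ve\equiv 1$ on $[0,\ve]$ and $\psi_\ve\ge 0$, one has $\mathbf{1}_{B_n(x,\ve)}\le g^{\ve,n}_x$ pointwise; since $\psi_\ve\equiv 0$ on $[2\ve,\infty)$ and $\psi_\ve\le 1$, one has $g^{\ve,n}_x\le\mathbf{1}_{B_n(x,2\ve)}$ pointwise. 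Integrating against $\mu$ gives $\mu(B_n(x,\ve))\le g_{x,\ve,n}(\mu)\le\mu(B_n(x,2\ve))$.

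For joint continuity, I would first note that, for fixed $x$, the function $g^{\ve,n}_x$ is bounded and continuous: each $y\mapsto d(f^ix,f^iy)$ is continuous because $f$ (hence $f^i$) is continuous, so $y\mapsto d_n(x,y)$ is continuous as a maximum of finitely many continuous functions, and $\psi_\ve$ is continuous; hence $\mu\mapsto g_{x,\ve,n}(\mu)=\int g^{\ve,n}_x\,d\mu$ is continuous for the weak topology. As $X$ is metric and $\M(X)$ is metrizable, it suffices to check sequential joint continuity, so fix $(\mu_k,x_k)\to(\mu,x)$ and split
\[
\bigl|g_{x_k,\ve,n}(\mu_k)-g_{x,\ve,n}(\mu)\bigr|\le\int\bigl|g^{\ve,n}_{x_k}-g^{\ve,n}_x\bigr|\,d\mu_k+\Bigl|\int g^{\ve,n}_x\,d\mu_k-\int g^{\ve,n}_x\,d\mu\Bigr|.
\]
The second term tends to $0$ since $g^{\ve,n}_x\in C_b(X)$ and $\mu_k\to\mu$ weakly. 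For the first term, using that $\psi_\ve$ is $(1/\ve)$-Lipschitz, that $|\max_i a_i-\max_i b_i|\le\max_i|a_i-b_i|$, and the triangle inequality for $d$, one gets, for every $y\in X$,
\[
\bigl|g^{\ve,n}_{x_k}(y)-g^{\ve,n}_x(y)\bigr|\le\frac1\ve\bigl|d_n(x_k,y)-d_n(x,y)\bigr|\le\frac1\ve\max_{0\le i<n}d(f^ix_k,f^ix),
\]
and the right-hand side is independent of $y$ and tends to $0$, because $f^ix_k\to f^ix$ for each of the finitely many $i\in\{0,\dots,n-1\}$; hence $\int|g^{\ve,n}_{x_k}-g^{\ve,n}_x|\,d\mu_k\le\frac1\ve\max_{0\le i<n}d(f^ix_k,f^ix)\to 0$. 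Combining the two bounds yields $g_{x_k,\ve,n}(\mu_k)\to g_{x,\ve,n}(\mu)$.

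The argument has no real obstacle; the one point that deserves care is that the estimate on $\|g^{\ve,n}_{x_k}-g^{\ve,n}_x\|_\infty$ must be genuinely uniform in $y$, which holds precisely because the time window $\{0,\dots,n-1\}$ is finite and $\psi_\ve$ is globally Lipschitz — for a fixed finite $n$, plain continuity of $f$ is enough, and uniform continuity of $f$ would only be relevant if $n$ were allowed to grow.
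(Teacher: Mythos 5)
Your proof is correct and takes essentially the same route as the paper: the sandwich inequality by pointwise comparison of $g^{\ve,n}_x$ with the indicator functions of $B_n(x,\ve)$ and $B_n(x,2\ve)$, and joint continuity via the uniform-in-$y$ estimate $\|g^{\ve,n}_{x_k}-g^{\ve,n}_x\|_\infty\le\frac1\ve\max_{0\le i<n}d(f^ix_k,f^ix)$ combined with weak convergence, the only difference being that the paper outsources the final triangle-inequality splitting to Lemma~2.1 of \cite{AS} while you write it out. Your closing remark is also accurate: for a fixed finite $n$ plain continuity of $f$ already gives the needed convergence, so the uniform-continuity hypothesis is not essential at this particular step.
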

\begin{proof}
 In order to prove the second assertion, just note that, for each $x\in X$, each $\ve>0$ and each $n\in\N$, $g^{\ve,n}_x: X\rightarrow \R$ is a continuous function such that, for each $y\in X$, $ \chi_{_{B_n(x,\ve)}}(y) \leq g_{x}^{\ve,n}(y)\leq \chi_{_{B_n(x,2\ve)}}(y)$.

  Now, given that $g_x^{\ve,n}(y)$ only depends on $d_n(x,y)$, it is straightforward to show that $g^{\ve,n}_{x_m}$ converges uniformly to $g^{\ve,n}_x$ on $X$ when $d(x_m,x)\rightarrow 0$ (since each of the functions $f,\ldots,f^{n-1}$ is uniformly continuous, for each $\eta>0$, there exists $\delta>0$ so that if $d(w,x)<\delta$, then $d_n(w,x)<\eta$).

The proof of the first assertion now follows from the same arguments presented in the proof of Lemma~2.1 in~\cite{AS}.
\end{proof}

\begin{rek} The results stated in Lemma~\ref{asympt1} are particularly true if $(X,T)$ is a topological dynamical system, since in this case, $f$ is uniformly continuous.
\end{rek}


Since, for each $x\in X$,
\[\underline{h}_{\mu}(f,x):=\lim_{\ve\to 0}\liminf_{n\to\infty}\frac{\log \mu(B_n(x,\ve))}{-n}=\lim_{\ve\to 0}\lim_{s\to \infty}\inf_{n\geq s} \frac{\log \mu(B_n(x,\ve))}{-n},\]
we set, for each $x\in X$, each $\ve>0$ and each $s\in\mathbb{N}$, 
\begin{eqnarray*}
  \underline{\beta}_{\mu}^\ve(x,s)=\inf_{n> s} \frac{\log \mu(B_n(x,\ve))}{-n};
\end{eqnarray*}
note that, for each $\ve>0$, $\N\ni s\mapsto\underline{\beta}_{\mu}^\ve(x,s)\in[0,\infty]$ is a non-decreasing function, whereas, for each $s\in\N$, $\mathbb{R}_+\ni \ve\mapsto\underline{\beta}_{\mu}^\ve(x,s)\in[0,\infty]$ is a non-increasing function.

Let, for each $x\in X$, each $\ve>0$ and each $s\in\N$,
\[{\underline{\gamma}}_\mu^\ve(x,s):=\inf_{n\ge s}\frac{\log(g_{x,\ve,n}(\mu))}{-n},\]
where $g_{x,\ve,n}(\mu)$ is defined as in Lemma~\ref{asympt1}. 

\begin{lema}
\label{Gdelta3} 
Let $X$ be a Polish metric space and let $f:X\rightarrow X$ be a continuous function. Then, for each $\alpha>0$, each $\ve>0$ and each $s\in\N$, 
 \[\underline{M}(\alpha,\ve,s):=\{ \mu\in \M(X)\mid\mu\textrm{-}\esssup\underline{\gamma}_{\mu}^\ve(x,s)\le\alpha\}\] 
 is a $G_{\delta}$ subset of $\M(X)$.
\end{lema}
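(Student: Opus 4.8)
The plan is to express $\underline{M}(\alpha,\ve,s)$ as a countable intersection of sets of the form $\{\mu\in\M(X)\mid\mu(C_k^{\mu})=1\}$, where each $C_k^{\mu}$ is an open subset of $X$ whose dependence on $\mu$ is controlled by the joint continuity in Lemma~\ref{asympt1}, and then to show that each of these sets is a $G_\delta$ subset of $\M(X)$. Concretely, for $k\in\N$ and $\mu\in\M(X)$ I set
\[C_k^{\mu}:=\Big\{x\in X\ \Big|\ g_{x,\ve,n}(\mu)>e^{-n(\alpha+1/k)}\ \text{for some}\ n\ge s\Big\}.\]
Since, for fixed $\mu$, the map $x\mapsto g_{x,\ve,n}(\mu)$ is continuous (this follows from the joint continuity of $g_{\ve,n}$), the set $C_k^{\mu}$ is open, hence Borel, and $x\mapsto\underline{\gamma}_{\mu}^{\ve}(x,s)$ is Borel. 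Unwinding $\underline{\gamma}_{\mu}^{\ve}(x,s)=\inf_{n\ge s}\log(g_{x,\ve,n}(\mu))/(-n)$ gives the equivalence $\underline{\gamma}_{\mu}^{\ve}(x,s)\le\alpha\iff x\in\bigcap_{k\in\N}C_k^{\mu}$. Since $\mu\textrm{-}\esssup\underline{\gamma}_{\mu}^{\ve}(x,s)\le\alpha$ means precisely $\mu\big(\{x:\underline{\gamma}_{\mu}^{\ve}(x,s)\le\alpha\}\big)=1$, and since $(C_k^{\mu})_{k}$ is non-increasing in $k$, I obtain
\[\underline{M}(\alpha,\ve,s)=\bigcap_{k\in\N}\big\{\mu\in\M(X)\ \big|\ \mu(C_k^{\mu})=1\big\},\]
so it is enough to prove, for each fixed $k$, that $\{\mu\mid\mu(C_k^{\mu})=1\}$ is a $G_\delta$ subset of $\M(X)$.

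Fix $k$. By Lemma~\ref{asympt1} the map $g_{\ve,n}:\M(X)\times X\to[0,1]$ is jointly continuous, so
\[C_k:=\bigcup_{n\ge s}\big\{(\mu,x)\in\M(X)\times X\ \big|\ g_{\ve,n}(\mu,x)>e^{-n(\alpha+1/k)}\big\}\]
is open in the Polish metrizable product $\M(X)\times X$, with $\mu$-section equal to $C_k^{\mu}$. I then write its indicator as a non-decreasing pointwise supremum $\chi_{C_k}=\sup_{j\in\N}h_j$ of continuous functions $h_j:\M(X)\times X\to[0,1]$ (possible since $\M(X)\times X$ is metric), and consider $\Phi_k:\M(X)\to[0,1]$, $\Phi_k(\mu):=\mu(C_k^{\mu})$. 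By the monotone convergence theorem,
\[\Phi_k(\mu)=\int_X\chi_{C_k}(\mu,x)\,d\mu(x)=\sup_{j\in\N}\int_X h_j(\mu,x)\,d\mu(x).\]

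The main point is then to show that $\mu\mapsto\int_X\phi(\mu,x)\,d\mu(x)$ is continuous on $\M(X)$ for every bounded, jointly continuous $\phi:\M(X)\times X\to[0,1]$; granting this, $\Phi_k$ is a pointwise supremum of continuous functions and hence lower semicontinuous. This continuity is the technical heart of the argument, and I would obtain it exactly as in the proof of Lemma~2.1 in~\cite{AS}: given $\mu_m\to\mu$ weakly, one splits $\int\phi(\mu_m,x)\,d\mu_m(x)-\int\phi(\mu,x)\,d\mu(x)$ into $\int(\phi(\mu_m,x)-\phi(\mu,x))\,d\mu_m(x)$ plus $\int\phi(\mu,x)\,d(\mu_m-\mu)(x)$; the second term tends to $0$ by weak convergence since $\phi(\mu,\cdot)$ is bounded and continuous, while for the first one invokes Prokhorov's theorem to find, for $\eta>0$, a compact $K\subset X$ with $\mu_m(K)>1-\eta$ for all $m$, observes that $\phi$ is uniformly continuous on the compact set $(\{\mu_m\}_m\cup\{\mu\})\times K$, and concludes that $\sup_{x\in K}|\phi(\mu_m,x)-\phi(\mu,x)|\to0$. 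Tightness is what makes the non-compactness of $X$ harmless, and I expect this to be the only genuine subtlety.

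Finally, since $\Phi_k$ is lower semicontinuous and $0\le\Phi_k\le1$,
\[\big\{\mu\mid\mu(C_k^{\mu})=1\big\}=\big\{\mu\mid\Phi_k(\mu)\ge1\big\}=\bigcap_{m\in\N}\big\{\mu\mid\Phi_k(\mu)>1-1/m\big\}\]
is a countable intersection of open subsets of $\M(X)$, hence $G_\delta$. Combining this with the reduction from the first paragraph, $\underline{M}(\alpha,\ve,s)$ is a countable intersection of $G_\delta$ sets, and is therefore itself a $G_\delta$ subset of $\M(X)$.
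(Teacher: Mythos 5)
Your proof is correct, but the key step is organized differently from the paper's. The paper argues on the complement: it writes $\M(X)\setminus\underline{M}(\alpha,\ve,s)$ as $\bigcup_{k,l}\M(k,l)$ with $\M(k,l)=\{\nu\mid\nu(Z_\nu(k))\ge 1/l\}$, $Z_\nu(k)=\{x\mid\underline{\gamma}_\nu^\ve(x,s)\ge\alpha+1/k\}$, and proves each $\M(k,l)$ is closed by contradiction: tightness of the limit measure produces a compact $C\subset X\setminus Z_\mu(k)$ with $\mu(C)>1-1/l$, a finite subcover of $\{\mu\}\times C$ by product boxes inside the open set $W_k=\{(\mu,x)\mid\underline{\gamma}_\mu^\ve(x,s)<\alpha+1/k\}$ forces $Z_{\mu_n}(k)$ into the complement of an open set $\mathcal{O}\supset C$ for large $n$, and the portmanteau inequality $\limsup_n\mu_n(X\setminus\mathcal{O})\le\mu(X\setminus\mathcal{O})<1/l$ gives the contradiction. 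You instead work directly with the set: your $C_k^\mu$ is exactly the $\mu$-section of (the analogue of) $W_k$, and you prove full lower semicontinuity of $\Phi_k(\mu)=\mu(C_k^\mu)$ by approximating the indicator of the open graph set from below by continuous functions and invoking a general continuity lemma for $\mu\mapsto\int\phi(\mu,x)\,d\mu(x)$, itself proved via Prokhorov tightness and uniform continuity on compacts (this is essentially the Lemma~2.1 of~\cite{AS} argument that the paper also cites, but used there only for the joint continuity of $g_{\ve,n}$). Both proofs hinge on the same underlying fact — semicontinuity of $\mu\mapsto\mu(\text{section of an open set in }\M(X)\times X)$ — and both use tightness to neutralize the non-compactness of $X$; your route packages it as a reusable lower-semicontinuity principle and avoids the finite-subcover/contradiction bookkeeping, at the cost of the monotone-approximation machinery, while the paper's argument is more hands-on and only needs the semicontinuity at the thresholds $1/l$. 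One shared caveat: like the paper's own proof, you use the joint continuity from Lemma~\ref{asympt1}, which is stated there for uniformly continuous $f$, whereas the lemma as stated assumes only continuity of $f$; this mismatch is the paper's, not yours.
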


\begin{proof}
  This is basically the proof of Proposition~2.1 in~\cite{AS}; we present the details for the reader's sake.

  Fix $\alpha>0$, $\ve>0$ and $s\in\N$. Note that it is enough to prove that $\M(X)\setminus \underline{M}(\alpha,\ve,s)=\{\mu\in \M(X) \mid\mu(\{x\in X\mid\underline{\gamma}_{\mu}^\ve(x,s)>\alpha\})>0\}$ is an $F_{\sigma}$ set. 

Let $\mu \in \M(X)$, let $k\in \N$, set $Z_{\mu}(k):=\{x\in X\mid \underline{\gamma}_{\mu}^\ve(x,s)\geq\alpha+1/k\}$ and set, for each $l\in\N$,
\begin{eqnarray*}
\M(k,l):=\{\nu\in \M(X)\mid \nu(Z_{\nu}(k))\ge1/l\}.
\end{eqnarray*}

\textit{Claim 1.} $Z_{\mu}(k)$ is  closed.

Let $(z_m)$ be a sequence in $Z_{\mu}(k)$ such that $z_m\to z$. Since,  by Lemma~\ref{asympt1}, for each $n\in\N$ and each $\mu\in\M(X)$, $X\ni x\mapsto g_{x,\ve,n}(\mu)\in[0,1]$ is a continuous function, the mapping $X\ni x\longmapsto  \underline{\gamma}_{\mu}^\ve(x,s)\in[0,\infty]$ is upper-semicontinuous, so $z\in Z_{\mu}(k)$. 

\textit{Claim 2.} $W_{k}=\{(\mu,x)\in \M(X)\times X\mid \underline{\gamma}_{\mu}^\ve(x,s)<\alpha+1/k\}$ is open.

It follows from Lemma~\ref{asympt1} that the mapping $\M(X)\times X\ni(\mu,x)\longmapsto \underline{\gamma}_{\mu}^\ve(x,s)$
is upper-semicontinuous. 

\

Now, we show that $\M(k,l)$ is closed. Let $(\mu_n)$ be a sequence in $\M(k,l)$ such that $\mu_n\to \mu$. Suppose, by absurd, that $\mu\notin \M(k,l)$; we will find that $\mu_n\notin \M(k,l)$ for~$n$ sufficiently large, a contradiction.

If $\mu\notin \M(k,l)$, then $\mu(A)>1-1/l$ where, $A=X\setminus Z_{\mu}(k)$.  Given that $\mu$ is tight ($\mu$ is a probability Borel measure and the space $X$ is Polish), there exists a compact set $C\subset A$ such that $\mu(C)>1-1/l$.

The idea is to construct a suitable subset of $W_k$ that contains a neighborhood of $\{\mu\}\times C$. Let, for each $x\in C$, $V_x\subset W_{k}$ be an open neighborhood of~$(\mu,x)$ (such open set exists, by Claim 2); that is, set $V_x:=B((\mu,x);\varepsilon)=\{(\nu,y)\in\M(X)\times X\mid\max\{\rho(\nu,\mu),d(x,y)\}<\varepsilon\}$, for some suitable $\varepsilon>0$ (where $\rho$ is any metric defined in $\M(X)$ which is compatible with the weak topology); then, $\{V_x\}_{x\in C}$ is an open cover of $\{\mu\}\times C$, and since $\{\mu\}\times C$ is a compact subset of $\M(X)\times X$, it follows that one can extract from $\{V_x\}_{x\in C}$ a finite subcover, $\{V_{x_i}\}_{i=1}^k$.

We affirm that there exists an $\ell\in\mathbb{N}$ (which depends on $C$) such that $\{\mu_n\}_{n\ge\ell}\subset\bigcap_{i}(\pi_1(V_{x_i}))$. Namely, for each $i$, there exists an $\ell_i$ such that $\{\mu_n\}_{n\ge\ell_i}\subset\pi_1(V_{x_i})$; set $\ell:=\max\{\ell_i\mid i\in\{1,\ldots,k\}\}$, and note that for each $i$, $\{\mu_n\}_{n\ge\ell}\subset\pi_1(V_{x_i})$. Set also $\mathcal{I}:=\bigcap_{i}(\pi_1(V_{x_i}))$ and $\mathcal{O}:=\bigcup_{i}(\pi_2(V_{x_i}))$.

Since for each $i$, $V_{x_i}=\pi_1(V_{x_i})\times\pi_2(V_{x_i})$, and given that
\begin{eqnarray*}
\{\mu_n\}_{n\ge\ell}\times\mathcal{O}&\subset& \mathcal{I}\times\mathcal{O}= \bigcup_{j} \left( \left[\bigcap_{i}\pi_1(V_{x_i}) \right] \times \pi_2(V_{x_j})\right)
\subset  \bigcup_{j}(\pi_1(V_{x_j})\times\pi_2(V_{x_j}))\\
&=&\bigcup_{j}V_{x_j}\subset W_{k},
\end{eqnarray*}

it follows that, for each $n\ge\ell$ and each $y\in\mathcal{O}$, $\underline{\gamma}_{\mu_n}^\ve(y,s)<\alpha+1/k$.

On the other hand, weak convergence implies
\[
\displaystyle\limsup_{n\to \infty}\mu_n(X\setminus\mathcal{O})\leq\mu(X\setminus\mathcal O)\leq  \mu(X\setminus C)=1-\mu(C)<\frac{1}{l},
\] from which it follows that there exists an $\tilde{\ell}\ge\ell$ such that, for each $n\ge\tilde{\ell}$, $\mu_n(X\setminus \mathcal{O})<1/l$.

Combining the last results, one concludes that for each $n\ge\tilde\ell$, $\mu_n(X\setminus \mathcal{O})<1/l$, and for each $x\in \mathcal{O}$, $\underline{\gamma}^\ve_{\mu_n}(y,s)<\alpha+1/k$, so
\[\mu_n(Z_{\mu_n}(k))\le\mu_n(X\setminus\mathcal{O})<1/l;\]
this contradicts the fact that, for each $n\in\mathbb{N}$, $\mu_n\in \mathcal{M}(k,l)$. Hence, $\mu \in  \mathcal{M}(k,l)$, and $\mathcal{M}(k,l)$ is a closed subset of $\mathcal{M}(f)$. 

Finally, it follows that $\M(X)\setminus\underline{M}(\alpha,\ve,s)=\bigcup_{k\in \N}\bigcup_{l\in \N}\M(k,l)$ is an $F_{\sigma}$ subset of $\M(X)$, and we are done. 
\end{proof}

\begin{teo}\label{BrinKatok} Let $X$ be a Polish space, let $f:X\rightarrow X$ be a measurable function, and let $\mu\in\M_e(f)$ (suppose that $\M_e(f)\neq\emptyset$). Then,
\begin{enumerate}
\item ${\underline{h}}_\mu(f,x)$ is $f$-invariant; 
\item $\int \underline{h}_\mu(f,x)d\mu(x)\le h_\mu(f)$ (with possibly $h_\mu(f)=\infty$).
\end{enumerate}
\end{teo}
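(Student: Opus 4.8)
The plan is to establish the theorem in two parts, corresponding to the two assertions, using a suitable approximation of the metric entropy from below by the "truncated" local entropies built from the continuous functions $g_{x,\ve,n}$ introduced in Lemma~\ref{asympt1}.

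\textbf{Part 1: $f$-invariance of $\underline{h}_\mu(f,x)$.} First I would observe that for fixed $\ve>0$ and $n\in\N$, one has $d_n(fx,fy)\le d_{n+1}(x,y)$, hence $B_{n+1}(x,\ve)\subset f^{-1}(B_n(fx,\ve))\cap B_n(x,\ve)$, and in the other direction $f^{-1}(B_n(fx,\ve))\supset B_{n+1}(x,\ve)$ while $B_n(x,\ve)\cap f^{-1}(B_n(fx,\ve))$ relates $\mu(B_n(fx,\ve))$ to $\mu(B_{n+1}(x,\ve))$ up to the factor coming from the first coordinate. Since $\mu$ is $f$-invariant, $\mu(f^{-1}(B_n(fx,\ve)))=\mu(B_n(fx,\ve))$, and from $B_{n+1}(x,\ve)\subset f^{-1}(B_n(fx,\ve))$ we get $\tfrac1{n+1}\log\mu(B_{n+1}(x,\ve))\ge \tfrac1{n+1}\log\mu(B_n(fx,\ve))$; taking $\liminf$ over $n$ and then $\ve\to 0$ yields $\underline{h}_\mu(f,x)\le\underline{h}_\mu(f,fx)$. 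For the reverse inequality I would use the standard trick of comparing $d_n(fx,fy)$ with $d_{n+1}(x,y)$ via uniform continuity of $f$: for every $\eta>0$ there is $\delta>0$ with $d(x,y)<\delta\Rightarrow d(fx,fy)<\eta$, so that a slightly smaller ball around $x$ maps into $B_n(fx,\eta)$ together with control on the zeroth coordinate; the factor-of-one shift in $n$ disappears in the limit. Thus $\underline{h}_\mu(f,x)=\underline{h}_\mu(f,fx)$ for every $x$, which is the claim (actually the pointwise identity is slightly stronger than $\mu$-a.e. $f$-invariance, but that is fine).

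\textbf{Part 2: $\int\underline{h}_\mu(f,x)\,d\mu(x)\le h_\mu(f)$.} The strategy is to bound $\underline{h}_\mu(f,x)$ by the Shannon--McMillan--Breiman information function of finite partitions and then integrate. Fix a finite measurable partition $\mathcal{Q}$ of $X$ with small diameter pieces (more precisely, I would take $\mathcal{Q}$ so that each atom has diameter $<\ve$, which is possible in a Polish space for Borel partitions, up to a null set one may also assume the atoms are "regular"); then the atom $\mathcal{Q}_n(x)$ of $\mathcal{Q}\vee f^{-1}\mathcal{Q}\vee\cdots\vee f^{-(n-1)}\mathcal{Q}$ containing $x$ is contained in $B_n(x,\ve)$, so $\mu(B_n(x,\ve))\ge\mu(\mathcal{Q}_n(x))$ and therefore $\tfrac1n\log\mu(B_n(x,\ve))\ge\tfrac1n\log\mu(\mathcal{Q}_n(x))=-\tfrac1n I_{\mathcal{Q}_n}(x)$. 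By the Shannon--McMillan--Breiman theorem (valid for ergodic $\mu$), $\tfrac1n I_{\mathcal{Q}_n}(x)\to h_\mu(f,\mathcal{Q})$ for $\mu$-a.e. $x$ and in $L^1$; hence $\liminf_{n\to\infty}\tfrac1n\log\mu(B_n(x,\ve))\ge -h_\mu(f,\mathcal{Q})$ for $\mu$-a.e. $x$, i.e. $\underline{\beta}$-type quantities are bounded. Letting $\ve\to 0$ gives $\underline{h}_\mu(f,x)\le h_\mu(f,\mathcal{Q})$ for $\mu$-a.e. $x$ — but this requires choosing partitions of arbitrarily small diameter, so I would instead pick a sequence $\mathcal{Q}^{(k)}$ of finite partitions with $\max$-diameter $\to 0$ and argue that for each $\ve$, eventually $\mathcal{Q}^{(k)}$ refines the "$\ve$-mesh", giving $\liminf_n\tfrac1n\log\mu(B_n(x,\ve))\ge -h_\mu(f,\mathcal{Q}^{(k)})$ for all large $k$, and then $\underline{h}_\mu(f,x)\le\inf_k h_\mu(f,\mathcal{Q}^{(k)})\le h_\mu(f)$. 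Integrating over $x$ with respect to $\mu$ (using that $\underline{h}_\mu(f,x)\ge 0$, so no integrability issue, the integral is a well-defined element of $[0,\infty]$) yields $\int\underline{h}_\mu(f,x)\,d\mu(x)\le h_\mu(f)$.

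\textbf{Main obstacle.} The delicate point is the non-compactness/Polish setting: the existence of finite Borel partitions with atoms of arbitrarily small diameter and finite entropy is not automatic when $X$ is merely Polish and $\mu$ is an arbitrary (not necessarily tight-support-compact) probability measure — although $\mu$ is tight, so one can work on an increasing sequence of compact sets $K_m$ with $\mu(K_m)\to 1$ and build countable partitions whose restriction to $K_m$ is finite with small atoms, then control the entropy contribution of the "tail" atom $X\setminus K_m$. Handling this truncation carefully (so that $H(\mathcal{Q})<\infty$ and the Shannon--McMillan--Breiman theorem applies, while the atoms relevant for covering $B_n(x,\ve)$ still have diameter $<\ve$ on the part of full measure in the limit) is the technical heart of the argument; everything else is a routine comparison of Bowen balls with dynamical partition atoms. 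I would also remark that only the inequality "$\le h_\mu(f)$" is claimed here (the matching lower bound, i.e. the full Brin--Katok equality, is not needed and would require more), which keeps Part~2 relatively soft.
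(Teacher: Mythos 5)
Your Part 1 contains the essential gap. From the inclusion $B_{n+1}(x,\ve)\subset f^{-1}(B_n(fx,\ve))$ and invariance one gets $\mu(B_{n+1}(x,\ve))\le\mu(B_n(fx,\ve))$, hence (after the liminf and $\ve\to0$) only the one-sided pointwise inequality $\underline{h}_\mu(f,fx)\le\underline{h}_\mu(f,x)$ (note your displayed inequality has the sign reversed, and the conclusion you draw from it is in the wrong direction). The reverse inequality is \emph{not} true pointwise, and no uniform-continuity trick can produce it: take $X=\R$, $f\equiv 0$, $\mu=\delta_0$ and $x=1$; then $\underline{h}_\mu(f,x)=\infty$ while $\underline{h}_\mu(f,fx)=0$, even though $f$ is uniformly continuous. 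Moreover the theorem only assumes $f$ measurable, so uniform continuity is not even available. What is actually claimed (and all that is needed) is $\mu$-a.e.\ invariance, and this is where the real work lies: one must integrate the one-sided inequality — if $\int\underline{h}_\mu(f,x)\,d\mu<\infty$, invariance of $\mu$ gives $\int\bigl(\underline{h}_\mu(f,x)-\underline{h}_\mu(f,fx)\bigr)d\mu=0$ and a nonnegative integrand, hence equality a.e.; if the integral is infinite, one uses ergodicity on the sets $\{x\mid\underline{h}_\mu(f,fx)\ge M\}$, which are backward invariant by the one-sided inequality, to conclude both functions are $+\infty$ a.e. This two-case measure-theoretic argument (which is the paper's proof) is entirely missing from your proposal.

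In Part 2 your route (finite partitions of small diameter, Shannon--McMillan--Breiman, then integrate) differs from the paper's, which takes a \emph{countable} partition $\xi$ with $\diam(\xi)<\ve$ (available by separability), uses $\xi^n(x)\subset B_n(x,\ve)$, and then applies Fatou twice — once in $n$ against $\frac1nH(\xi^n)$ and once in $\ve$ — with no appeal to SMB at all. As written, your version fails at the first step: a noncompact Polish space admits no \emph{finite} Borel partition with all atoms of diameter $<\ve$, and your proposed repair (compact exhaustion plus a large-diameter tail atom) breaks the inclusion $\mathcal{Q}_n(x)\subset B_n(x,\ve)$ for orbits visiting the tail; you correctly identify this as the technical heart but do not carry it out, so Part 2 is incomplete. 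The honest fix, in either approach, is the standard lemma that any Borel probability on a separable metric space admits a \emph{countable} partition of diameter $<\ve$ with finite entropy; with such a partition your SMB argument becomes superfluous, since the paper's Fatou argument already yields $\int\underline{h}_\mu(f,x)\,d\mu\le h_\mu(f)$ directly.
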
  
  \begin{proof}
 In order to prove item 1, we need the following result.

    {\textit{Claim.}} For each $x\in X$, one has ${\underline{h}}_\mu(f,fx)\le{\underline{h}}_\mu(f,x)$.

    Since, for each $x\in X$, each $\ve>0$ and each $n\in\N$, $f^{-1}(B_n(fx,\ve))\subset B_{n+1}(x,\ve)$, it follows from the monotonicity, for each $s\in\N$, of $(0,1]\ni\ve\mapsto{\underline{\beta}}_\mu^\ve(x,s)\in[0,\infty]$ that for each $x\in X$, both ${\underline{h}}_\mu(f,x)$ and ${\underline{h}}_\mu(f,fx)$ exist (they may be infinite) and satisfy the required inequalities.
    
We split the proof of item 1 into two cases.

{\textit{Case 1.}} $\int{\underline{h}}_\mu(f,x)d\mu(x)<\infty$. Since $\mu\in\M_e(f)$, one has $\int({\underline{h}}_\mu(f,x)-{\underline{h}}_\mu(f,fx))d\mu(x)=0$. The result follows now from Claim.

{\textit{Case 2.}} $\int{\underline{h}}_\mu(f,x)d\mu(x)=\infty$. Since $\mu\in\M(f)$, one has $\int{\underline{h}}_\mu(f,fx)d\mu(x)=\infty$ (otherwise, $\int{\underline{h}}_\mu(f,x)d\mu(x)<\infty$). Let $M>0$ and set $A_M^f:=\{x\in X\mid{\underline{h}}_\mu(f,fx)\ge M\}$; then, $\mu(A^f_M)>0$. 
It follows from Claim (replacing $x$ with $fx$) that $f^{-1}(A^f_M)\subset A^f_M$, and since $\mu\in\M_e(f)$, one has $\mu(A^f_M)=1$.  Hence, $A^f_\infty:=\bigcap_{M\in\N}A^f_M$ is such that $\mu(A^f_\infty)=1$.

It also follows from Claim that $\mu(A_\infty)=1$, where $A_\infty:=\{x\in X\mid {\underline{h}}_\mu(f,x)=\infty\}$. Summing up, $\mu(A_\infty\cap A^f_\infty)=1$, which means that ${\underline{h}}_\mu(f,fx)={\underline{h}}_\mu(f,x)=\infty$ for $\mu$-a.e. $x\in X$.

\

We proceed to the proof of item 2. Fix $\ve>0$ and let $\xi$ be an arbitrary countable measurable partition of $X$ such that $\diam(\xi):=\max_{\Delta\in\xi}\diam(\Delta)<\ve$; such partition exists, given that $X$ is separable. Then, for each $x\in X$, $\xi(x)\subset B(x,\ve)$, and hence, for each $n\in\mathbb{N}$,
    \[\xi^n(x)=\bigcap_{i=0}^{n-1}f^{-i}\xi(f^ix)\subset\bigcap_{i=0}^{n-1}f^{-i}B(f^ix,\ve)=B_n(x,\ve). \]

Therefore, for each $\ve>0$, it follows from Fatou's Lemma that
\begin{eqnarray*}\int\liminf_{n\to\infty}\frac{\log\mu(B_n(x,\ve))}{-n}d\mu(x)&\le&\int\liminf_{n\to\infty}\frac{\log\mu(\xi^n(x))}{-n}d\mu(x)\\
      &\le&\liminf_{n\to\infty}-\frac{1}{n}\left(\sum_{\Delta\in\xi^n}\mu(\Delta)\log\mu(\Delta)\right)\\
      &\le&\limsup_{n\to\infty}\frac{1}{n}H(\xi^n)\le h_\mu(f).
\end{eqnarray*}
Using again Fatou's Lemma, one gets
 \[\int\underline{h}_\mu(f,x)d\mu(x)\le h_\mu(f).\]
\end{proof}

  \begin{cor}  \label{BrinKatokC}
Let $X$ be a Polish space, let $f:X\rightarrow X$ be a continuous function, and let $\mu\in\M_e(f)$ (suppose that $\M_e(f)\neq\emptyset$). Then,
    \[\underline{h}_\mu^{loc}(f):=\mu\textrm{-}\essinf\underline{h}_\mu(f,x)=h_\mu(f).\]
\end{cor}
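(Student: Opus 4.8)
The plan is to prove the two inequalities $\underline h_\mu^{loc}(f)\le h_\mu(f)$ and $h_\mu(f)\le\underline h_\mu^{loc}(f)$ separately: the first is a formal consequence of Theorem~\ref{BrinKatok} together with the ergodicity of $\mu$, while the second is the lower-bound half of the classical Brin--Katok local entropy formula, carried out in the present Polish setting.

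For the first inequality, I would use item~1 of Theorem~\ref{BrinKatok} to the effect that $x\mapsto\underline h_\mu(f,x)$ is $f$-invariant. Since $\mu$ is ergodic, for every $t\ge0$ the set $\{x\mid\underline h_\mu(f,x)\le t\}$ is $f$-invariant and so has $\mu$-measure $0$ or $1$; hence $\underline h_\mu(f,\cdot)$ equals $\mu$-a.e.\ a constant $c\in[0,\infty]$. Consequently $\underline h_\mu^{loc}(f)=\mu\textrm{-}\essinf\underline h_\mu(f,x)=c=\int\underline h_\mu(f,x)\,d\mu(x)$, and by item~2 of Theorem~\ref{BrinKatok} this last integral is at most $h_\mu(f)$.

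For the reverse inequality, it suffices to prove $h_\mu(f,\mathcal P)\le\underline h_\mu^{loc}(f)$ for every finite partition $\mathcal P$ of $X$; taking the supremum over a countable family of finite partitions whose values $h_\mu(f,\mathcal P)$ approach $h_\mu(f)$ then gives the claim (any countable finite-entropy partition can be approximated, in the $H(\,\cdot\mid\cdot\,)$ sense, by a finite one, and if $h_\mu(f)=\infty$ the same reduction yields finite partitions with $h_\mu(f,\mathcal P)$ arbitrarily large). Given a finite $\mathcal P$, I would first replace it, at the cost of an arbitrarily small loss in $h_\mu(f,\cdot)$, by a finite partition with $\mu(\partial\mathcal P)=0$: such partitions exist in a Polish space, since one may disjointify a countable cover of $X$ by balls whose radii are chosen so that the bounding spheres are $\mu$-null (and $\mu$ assigns positive measure to only countably many concentric spheres). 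For such a $\mathcal P$, the sets $A_\ve:=\{x\mid B(x,\ve)\subset\mathcal P(x)\}$ increase to $X\setminus\partial\mathcal P$ as $\ve\downarrow0$, so $\mu(A_\ve)\to1$; moreover, by the Shannon--McMillan--Breiman theorem (applicable since $(X,\mathcal B,\mu,f)$ is an ergodic Lebesgue system), $-\tfrac1n\log\mu(\mathcal P^n(y))\to h_\mu(f,\mathcal P)$ for $\mu$-a.e.\ $y$. Fixing $\delta>0$, choosing $\ve$ with $\mu(A_\ve)$ close to $1$, and letting $C_M:=\{y\mid\mu(\mathcal P^n(y))<e^{-n(h_\mu(f,\mathcal P)-\delta)}\text{ for all }n\ge M\}$ with $\mu(C_M)$ close to $1$, the pointwise ergodic theorem applied to $\chi_{A_\ve}$ shows that, for $x$ in a set of measure close to $1$ and all large $n$, the $\mathcal P$-itinerary of any $y\in B_n(x,\ve)$ coincides with that of $x$ at all but at most $\delta'n$ of the times $0\le i<n$; hence $B_n(x,\ve)$ is covered by at most $(\card(\mathcal P))^{\delta'n}$ atoms of $\mathcal P^n$, and the part of $B_n(x,\ve)$ meeting $C_M$ has $\mu$-measure at most $(\card(\mathcal P))^{\delta'n}e^{-n(h_\mu(f,\mathcal P)-\delta)}$. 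Letting $n\to\infty$, then $\delta,\delta'\to0$ and $\ve\to0$, and recalling that $\underline h_\mu(f,\cdot)$ is $\mu$-a.e.\ the constant $c$ (so that a lower bound valid on a set of positive measure holds $\mu$-a.e.), one obtains $c\ge h_\mu(f,\mathcal P)$, as required.

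The step I expect to be the main obstacle is controlling the portion of $B_n(x,\ve)$ lying \emph{outside} $C_M$, i.e.\ the $\mathcal P^n$-atoms on which the Shannon--McMillan--Breiman estimate is not yet in force: since $X\setminus C_M$ has small but fixed positive measure, a naive bound on this contribution does not decay exponentially in $n$ and would ruin the rate. The classical remedy is to invoke the pointwise ergodic theorem also for $\chi_{X\setminus C_M}$ (and the relevant iterates), choosing all the ``good'' sets $A_\ve$ and $C_M$ with measure so close to $1$ that a $\mu$-generic orbit spends a density of time arbitrarily close to $1$ inside each of them simultaneously; this forces the bad atoms appearing in the cover of $B_n(x,\ve)$ to carry only an exponentially small share of the mass. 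Everything else in the argument is routine bookkeeping with partitions and the ergodic theorem.
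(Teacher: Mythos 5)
Your first inequality is fine and is essentially what the paper does: since $\underline h_\mu^{loc}(f)\le\int\underline h_\mu(f,x)\,d\mu(x)$, item~2 of Theorem~\ref{BrinKatok} gives $\underline h_\mu^{loc}(f)\le h_\mu(f)$ (the a.e.-constancy you extract from item~1 and ergodicity is not even needed for this). The divergence is in the hard inequality $h_\mu(f)\le\underline h_\mu^{loc}(f)$: the paper does not reprove it, it simply invokes Theorem~2.9 of \cite{Riquelme}, a Brin--Katok-type lower bound valid for ergodic measures on Polish spaces, whereas you attempt a self-contained Shannon--McMillan--Breiman argument.

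That attempt has a genuine gap, and it sits exactly at the step you flag. Your counting does give, for $\mu$-typical $x$ and all large $n$, $\mu(B_n(x,\ve)\cap C_M)\le(\card(\mathcal P))^{\delta' n}e^{-n(h_\mu(f,\mathcal P)-\delta)}$; but the remaining portion $\mu(B_n(x,\ve)\setminus C_M)$ is only bounded by the fixed constant $\mu(X\setminus C_M)>0$, and the remedy you propose (applying the pointwise ergodic theorem to $\chi_{X\setminus C_M}$ and taking the good sets of measure close to $1$) cannot yield the required exponential decay. Birkhoff's theorem controls asymptotic time averages of $\mu$-typical orbits and carries no rate of convergence; it constrains neither the measure of the $\mathcal P^n$-atoms that miss $C_M$ nor the points of $B_n(x,\ve)$ lying in them (such points need not be Birkhoff-typical at time $n$). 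In fact the quantity you would need to control --- the mass of a Bowen ball inside a fixed set of small positive measure --- is of the same nature as $\mu(B_n(x,\ve))$ itself, so the argument becomes circular; without handling it, the estimate only gives $\liminf_n -\frac1n\log\mu(B_n(x,\ve))\ge 0$. This is precisely the crux of the Brin--Katok/Katok lower bound, and the known proofs dispose of it by a genuinely different mechanism (e.g.\ packing arguments with maximal $(n,\ve)$-separated families of ``bad centers'' whose Bowen balls are pairwise disjoint, as in Katok's entropy formula, or conditional-measure arguments), or, as the paper does, by quoting \cite{Riquelme}. A secondary, fixable point: your replacement of $\mathcal P$ by a null-boundary partition must almost refine $\mathcal P$ (e.g.\ null-boundary neighborhoods of compact subsets of its atoms) so that the conditional entropy $H(\mathcal P\mid\mathcal Q)$ is small; merely disjointifying a cover by balls with $\mu$-null spheres produces some null-boundary partition but does not by itself control the loss in $h_\mu(f,\cdot)$.
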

\begin{proof} We split the proof into two cases.

    {{\textit{Case 1.}}}  $h_\mu(f)<\infty$. Given that $\underline{h}_\mu^{loc}(f)\le\int \underline{h}_\mu(f,x) d\mu(x)$, inequality $\underline{h}_\mu^{loc}(f)\le h_\mu(f)$ is a direct consequence of item~2 of Theorem~\ref{BrinKatok}. Inequality $h_\mu(f)\le\underline{h}_\mu^{loc}(f) $ is just Theorem~2.9 in~\cite{Riquelme}.

 {{\textit{Case 2.}}} $h_\mu(f)=\infty$. The result follows from Theorem~2.9 in~\cite{Riquelme}.
\end{proof}

\begin{teo} 
\label{Gdeltaentropy} Let $X$ be a Polish space, let $f:X\rightarrow X$ be a uniformly continuous function, and let $\alpha\in(0,\infty)$. Then,
\[\mathcal{E}(\alpha):=\{\mu\in\M_e(f)\mid h_\mu(f)\le\alpha\}\]
is a $G_\delta$ subset of $\M_e(f)$. 
  \end{teo}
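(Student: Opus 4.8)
The plan is to realize $\mathcal{E}(\alpha)$ as the trace on $\M_e(f)$ of a countable intersection of the $G_\delta$ sets $\underline{M}(\alpha,\ve,s)$ provided by Lemma~\ref{Gdelta3}. The two inputs that drive this are the sandwich $\mu(B_n(x,\ve))\le g_{x,\ve,n}(\mu)\le\mu(B_n(x,2\ve))$ of Lemma~\ref{asympt1}, which connects the auxiliary quantities $\underline{\gamma}_\mu^\ve(x,s)$ to the local entropy $\underline{h}_\mu(f,x)$, and Corollary~\ref{BrinKatokC}, which identifies $\mu\textrm{-}\essinf\underline{h}_\mu(f,x)$ with $h_\mu(f)$ for ergodic $\mu$.

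\emph{Step 1: a limit identity for $\underline{\gamma}$.} First I would record that $g^{\ve,n}_x(y)$ is non-decreasing in $\ve$ (in the middle regime $\ve\le d_n(x,y)\le 2\ve$ one has $\tfrac{\partial}{\partial\ve}(-\tfrac{d_n(x,y)}{\ve}+2)=\tfrac{d_n(x,y)}{\ve^2}\ge 0$, and the two extreme regimes are trivially monotone), so $\ve\mapsto g_{x,\ve,n}(\mu)$ is non-decreasing and hence $\ve\mapsto\underline{\gamma}_\mu^\ve(x,s)$ is non-increasing, while $s\mapsto\underline{\gamma}_\mu^\ve(x,s)$ is non-decreasing by definition; in particular $\lim_{\ve\to0}\lim_{s\to\infty}\underline{\gamma}_\mu^\ve(x,s)=\sup_{j\in\N}\sup_{s\in\N}\underline{\gamma}_\mu^{1/j}(x,s)$. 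Dividing the estimate of Lemma~\ref{asympt1} by $-n<0$ and taking $\inf_{n\ge s}$ gives
\[\inf_{n\ge s}\frac{\log\mu(B_n(x,2\ve))}{-n}\le\underline{\gamma}_\mu^\ve(x,s)\le\inf_{n\ge s}\frac{\log\mu(B_n(x,\ve))}{-n},\]
and letting $s\to\infty$ and then $\ve\to0$ squeezes both outer terms to $\underline{h}_\mu(f,x)$; thus $\sup_{j\in\N}\sup_{s\in\N}\underline{\gamma}_\mu^{1/j}(x,s)=\underline{h}_\mu(f,x)$ for every $\mu\in\M(X)$ and $x\in X$. As a byproduct, $x\mapsto\underline{h}_\mu(f,x)$ is Borel, being a countable supremum of the upper-semicontinuous maps $x\mapsto\underline{\gamma}_\mu^{1/j}(x,s)$ from the proof of Lemma~\ref{Gdelta3}.

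\emph{Step 2: the decomposition.} I would then show that $\mathcal{E}(\alpha)=\M_e(f)\cap\bigcap_{j\in\N}\bigcap_{s\in\N}\underline{M}(\alpha,1/j,s)$. If $\mu\in\M_e(f)$ and $h_\mu(f)\le\alpha$, then by item~1 of Theorem~\ref{BrinKatok} the function $\underline{h}_\mu(f,\cdot)$ is $f$-invariant, hence $\mu$-a.e.\ constant, and by Corollary~\ref{BrinKatokC} that constant equals $\mu\textrm{-}\essinf\underline{h}_\mu(f,x)=h_\mu(f)\le\alpha$; so $\underline{\gamma}_\mu^{1/j}(x,s)\le\underline{h}_\mu(f,x)\le\alpha$ for $\mu$-a.e.\ $x$, i.e.\ $\mu\in\underline{M}(\alpha,1/j,s)$ for all $j,s$. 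Conversely, if $\mu\in\M_e(f)$ belongs to every $\underline{M}(\alpha,1/j,s)$, then for each pair $(j,s)$ the set $\{x:\underline{\gamma}_\mu^{1/j}(x,s)\le\alpha\}$ has full $\mu$-measure; intersecting these countably many full-measure sets and applying the identity of Step~1 gives $\underline{h}_\mu(f,x)\le\alpha$ for $\mu$-a.e.\ $x$, so $\mu\textrm{-}\essinf\underline{h}_\mu(f,x)\le\alpha$, and Corollary~\ref{BrinKatokC} yields $h_\mu(f)\le\alpha$, i.e.\ $\mu\in\mathcal{E}(\alpha)$.

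\emph{Step 3: conclusion.} By Lemma~\ref{Gdelta3}, each $\underline{M}(\alpha,1/j,s)$ is $G_\delta$ in $\M(X)$, so $\bigcap_{j,s}\underline{M}(\alpha,1/j,s)$ is $G_\delta$ in $\M(X)$, and its intersection with $\M_e(f)$ is $G_\delta$ in the subspace topology of $\M_e(f)$; by Step~2 this is exactly $\mathcal{E}(\alpha)$. I expect the only delicate points to be the bookkeeping with the order of the suprema and limits in Step~1 and making sure the ``$\mu$-a.e.'' conclusions survive the countable intersection in Step~2; the substantive work is already carried out in Lemma~\ref{Gdelta3} and Corollary~\ref{BrinKatokC}.
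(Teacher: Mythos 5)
Your proposal is correct and follows essentially the same route as the paper: you write $\mathcal{E}(\alpha)$ as $\M_e(f)\cap\bigcap_{j,s}\underline{M}(\alpha,1/j,s)$, using the sandwich of Lemma~\ref{asympt1} to relate $\underline{\gamma}_\mu^{1/j}(x,s)$ to $\underline{h}_\mu(f,x)$, the $f$-invariance/ergodicity argument to pass between $\essinf$ and $\esssup$, Corollary~\ref{BrinKatokC} to identify the essential value with $h_\mu(f)$, and Lemma~\ref{Gdelta3} for the $G_\delta$ structure. The only cosmetic difference is that the paper splits the comparison into two claims through the intermediate quantities $\underline{\beta}_\mu^{1/k}(x,s)$, while you squeeze $\underline{\gamma}$ directly; the substance is identical.
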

\begin{proof}
If $\M_e(f)=\emptyset$, there is nothing to prove; so, assume that $\M_e(f)\neq\emptyset$. We need the following assertions.

  {\textit{Claim 1.}} $\{\mu\in\M(f)\mid\mu\textrm{-}\esssup\underline{h}_\mu(f,x)\le\alpha\}=\bigcap_{k,s\ge 1}\{\mu\in\M(f)\mid\mu\textrm{-}\esssup\underline{\beta}_\mu^{1/k}(x,s)\le\alpha\}$.

  The first inclusion follows from the fact that, for each $x\in X$ and each $\mu\in\M(f)$, $\N\times\N\ni(k,s)\mapsto\underline{\beta}_\mu^{1/k}(x,s)\in[0,\infty]$ is a non-decreasing function on both variables.

  Now, let $\mu\in\bigcap_{k,s\ge 1}\{\mu\in\M_e(f)\mid\mu\textrm{-}\esssup\underline{\beta}_\mu^{1/k}(x,s)\le\alpha\}$. Then, for each $k,s\in\N$, there exits a measurable $A_{k,s}\subset X$, with $\mu(A_{k,s})=1$, such that for each $x\in A_{k,s}$, $\underline{\beta}_\mu^{1/k}(x,s)\le \alpha$. Let, for each $k\in\N$, $A_k:=\bigcap_{s\ge 1}A_{k,s}$, and then set $A:=\bigcap_{k\ge 1}A_{k}$; it follows that, for each $x\in A$, one has $\underline{h}_\mu(f,x)=\lim_{k\to\infty}\lim_{s\to\infty}\underline{\beta}_\mu^{1/k}(x,s)\le\alpha$. Since $\mu(A)=1$, the result follows.
  
\

{\textit{Claim 2.}} Let $\mu\in\M(f)$ and suppose that, for each $\ve>0$ and each $s\in\N$, $\mu\textrm{-}\esssup\underline{\beta}_\mu^\ve(x,s)\le\alpha$. Then, for each $\eta>0$ and each $p\in\N$, $\mu\textrm{-}\esssup\underline{\gamma}_\mu^\eta(x,p)\le\alpha$. The converse is also true. 

 The proof of the claim follows from the fact that, for each $x\in X$, each $\ve>0$, each $s\in\N$ and each $\mu\in\M(f)$, one has $\underline{\beta}_\mu^\ve(x,s)\le\underline{\gamma}_\mu^\ve(x,s)\le\underline{\beta}_\mu^{2\ve}(x,s)$ (see the proof of Lemma~\ref{asympt1}).

\

Since, for each $\mu\in\M_e(f)$, $\underline{h}_\mu^{loc}(f)=\mu\textrm{-}\esssup\underline{h}_\mu(f,x)$, it follows from 
Corollary~\ref{BrinKatokC} that 
\begin{eqnarray*}
    \mathcal{E}(\alpha)&=&\{\mu\in\M_e(f)\mid\mu\textrm{-}\esssup\underline{h}_\mu(f,x)\le\alpha\}=\bigcap_{k\ge 1}\bigcap_{s\ge 1}\{\mu\in\M_e(f)\mid\mu\textrm{-}\esssup\underline{\beta}_\mu^{1/k}(x,s)\le\alpha\}\\
    &=&\bigcap_{k\ge 1}\bigcap_{s\ge 1}\{\mu\in\M_e(f)\mid\mu\textrm{-}\esssup\underline{\gamma}_\mu^{1/k}(x,s)\le\alpha\},
  \end{eqnarray*}
  where we have used Claims 1 and 2 in the second and third equalities, respectively. 

The result follows now from Proposition~\ref{Gdelta3}.
\end{proof}

\begin{rek}\label{RGdeltaentropy} If $X$ is a compact metric space and $f:X\rightarrow X$ is a continuous function, 
  then the result stated in Theorem~\ref{Gdeltaentropy} remains valid. 
\end{rek}

\begin{proof1}
The result is a consequence of Theorem~\ref{Gdeltaentropy} and the fact that $\mathcal{E}_0^e(f)=\cap_{m\in\N}\mathcal{E}(1/m)$. 
\end{proof1}


\section{Correlation entropies}
\label{SCE}

In this section, we prove Theorems~\ref{entrinf1} and~\ref{entrinf2}. In what follows, $(X,f)$ is a topological dynamical system.

\subsection{Positively expansive maps}

Suppose also that $f$ is a positively expansive map. Let $\mu\in\M(X)$, let $s\in(0,1)$ and let $E=\{x_i\}$ be a finite $(n,r)$-generating set (which implies $X=\cup_{x_i\in E}B_n(x_i,r)$; given that $X$ is compact, such finite generating set exists). Let also $\tilde{E}=\{x_j\}$ be a subset of $E$ such that $\{B_n(x_j,r)\}_{\tilde{E}}$ is a covering of $\supp(\mu)$.

Since, for each $n\in\mathbb{N}$ and each $x\in B_n(x_j,r)$, one has $B_n(x_j,r)\subset B_n(x,2r)$, it follows that for each $x\in B_n(x_j,r)\cap\supp(\mu)$, $\mu(B_n(x_j,r))^{s-1}\ge \mu(B_n(x,2r))^{s-1}$; hence,
\begin{eqnarray}
\label{eq4}
I_{\mu}(s,2r,n)\nonumber
&:= & \int_{\supp(\mu)}\mu(B_n(x,2r))^{s-1}d\mu(x)\nonumber 
\leq  \sum_{x_j\in \tilde{E}}\int_{B_n(x_j,r)\cap\supp(\mu)} \mu(B_n(x,2r))^{s-1}d\mu(x)\nonumber \\
&\leq & \sum_{x_j\in \tilde{E}}\int_{B_n(x_j,r)\cap\supp(\mu)} \mu(B_n(x_j,r))^{s-1}d\mu(x)
=\nonumber \sum_{x_j\in \tilde{E}} \mu(B_n(x_j,r))^s \\
&\leq &\sum_{x_i\in E} \mu(B_n(x_i,r))^s.
\end{eqnarray}

Naturally, since $X$ is a compact metric space, one can assume, without loss of generality, that $E$ is always a finite $(n,r)$-generating set of $X$.

\begin{defi}
\label{sumacubos}
Let $\mu\in\M(X)$. One defines, for each $s\in (0,1)$, each $n\in\mathbb{N}$ and each $r>0$,
\[S_{\mu}(s,r,n)=\inf_{E}\sum_{x_j\in E} \mu(B_n(x_j,r))^s,\qquad W_{\mu}(s,r,n)=\inf_{E}\sum_{x_j\in E}g_{r,n}(\mu,x_j)^s,\]
where  the infimum is taken over all finite $(n,r)$-generating sets of $X$, and $g_{r,n}(\mu,x)$ is defined as in the statement of Lemma \ref{asympt1}.
\end{defi}

\begin{propo}
  \label{zerolowerfractal}
Let $\mu\in\M(X)$, $s\in(0,1)$ and $r>0$. Then, 
\[d_{\mu}^{-}(s,r):= \liminf_{n\to\infty} \frac{\log W_{\mu}(s,r,n)}{(1-s)n}\le\liminf_{n\to\infty} \frac{\log S_{\mu}(s,2r,n)}{(1-s)n}.\]
 Moreover, $\underline{H}_{\mu}(f,s)=\displaystyle\lim_{r\to 0}\liminf_{n\to\infty}\dfrac{\log I_{\mu}(s,r,n)}{(1-s)n}\leq \lim_{r\to 0}d_{\mu}^{-}(s,r)$.
\end{propo}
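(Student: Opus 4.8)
The statement has two parts. The first inequality, $d_\mu^-(s,r)\le\liminf_{n\to\infty}\frac{\log S_\mu(s,2r,n)}{(1-s)n}$, is a pointwise comparison of the two quantities $W_\mu(s,r,n)$ and $S_\mu(s,2r,n)$. The plan is to use the sandwich bound from Lemma~\ref{asympt1}: for each $x\in X$, each $r>0$ and each $n\in\N$ one has $\mu(B_n(x,r))\le g_{x,r,n}(\mu)\le\mu(B_n(x,2r))$. Hence, for any finite $(n,r)$-generating set $E$, $\sum_{x_j\in E}g_{r,n}(\mu,x_j)^s\le\sum_{x_j\in E}\mu(B_n(x_j,2r))^s$ (using that $t\mapsto t^s$ is increasing); but the right-hand side, with $E$ regarded as an $(n,r)$-generating set, is precisely a sum of the type appearing in the definition of $S_\mu(s,2r,n)$, and since every $(n,r)$-generating set is also an $(n,2r)$-generating set, taking the infimum over $E$ gives $W_\mu(s,r,n)\le S_\mu(s,2r,n)$. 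Taking $\log$, dividing by $(1-s)n>0$, and passing to the $\liminf$ as $n\to\infty$ yields the first claimed inequality.

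For the second part, I would first establish the identity $\underline{H}_\mu(f,s)=\lim_{r\to 0}\liminf_{n\to\infty}\frac{\log I_\mu(s,r,n)}{(1-s)n}$, where $I_\mu(s,r,n)=\int_{\supp(\mu)}\mu(B_n(x,r))^{s-1}d\mu(x)$. This is just a rewriting of the definition of $\underline{H}_\mu(f,s)$ for $q=s\in(0,1)$: since $q-1=s-1<0$, one has $-\frac{1}{(q-1)n}=\frac{1}{(1-s)n}$, and $\mu(B_n(x,\ve))^{q-1}=\mu(B_n(x,\ve))^{s-1}$, so the displayed expression for $\underline{H}_\mu(f,s)$ becomes verbatim $\lim_{\ve\to 0}\liminf_{n\to\infty}\frac{1}{(1-s)n}\log\int_{\supp(\mu)}\mu(B_n(x,\ve))^{s-1}d\mu(x)$, i.e. $\lim_{r\to 0}\liminf_{n\to\infty}\frac{\log I_\mu(s,r,n)}{(1-s)n}$. (One should note that the limit over $r$ exists because, by monotonicity of $r\mapsto\mu(B_n(x,r))$ and the sign of $s-1$, the integrand is monotone in $r$, so $\liminf_{n\to\infty}\frac{\log I_\mu(s,r,n)}{(1-s)n}$ is monotone in $r$.)

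Next I would combine this with the chain of inequalities~\eqref{eq4} already derived in the excerpt, namely $I_\mu(s,2r,n)\le\sum_{x_i\in E}\mu(B_n(x_i,r))^s$ valid for every finite $(n,r)$-generating set $E$. Taking the infimum over all such $E$ gives $I_\mu(s,2r,n)\le S_\mu(s,r,n)$. Then $\frac{\log I_\mu(s,2r,n)}{(1-s)n}\le\frac{\log S_\mu(s,r,n)}{(1-s)n}$, and passing to $\liminf_{n\to\infty}$ yields $\liminf_{n\to\infty}\frac{\log I_\mu(s,2r,n)}{(1-s)n}\le\liminf_{n\to\infty}\frac{\log S_\mu(s,r,n)}{(1-s)n}$. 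Now I would feed in the first part of the proposition with $r$ replaced by $r$: combining $\liminf_{n\to\infty}\frac{\log S_\mu(s,r,n)}{(1-s)n}$ --- wait; more directly, from the first inequality of the proposition $d_\mu^-(s,r)\le\liminf_{n\to\infty}\frac{\log S_\mu(s,2r,n)}{(1-s)n}$, so it suffices to relate $\liminf_n\frac{\log I_\mu(s,2r,n)}{(1-s)n}$ to $d_\mu^-(s,r')$ for a suitable $r'$. Using $I_\mu(s,2r,n)\le S_\mu(s,r,n)$ together with the definition $W_\mu\le S_\mu$-type bound reversed is not what I want; instead the cleaner route is: $I_\mu(s,2r,n)\le S_\mu(s,r,n)$ and, by Lemma~\ref{asympt1} again, $g_{r,n}(\mu,x_j)\ge\mu(B_n(x_j,r))$ would go the wrong way, so I instead observe $W_\mu(s,r,n)\le S_\mu(s,r,n)$ is false in general; the correct comparison I will actually use is simply $d_\mu^-(s,r)=\liminf_n\frac{\log W_\mu(s,r,n)}{(1-s)n}$ and the bound $I_\mu(s,2r,n)\le S_\mu(s,r,n)$, then let $r\to 0$: $\lim_{r\to 0}\liminf_n\frac{\log I_\mu(s,2r,n)}{(1-s)n}=\underline{H}_\mu(f,s)$ on the left (reindexing $2r\to r$), and on the right $\lim_{r\to 0}\liminf_n\frac{\log S_\mu(s,r,n)}{(1-s)n}$, which by the first inequality of the proposition (replacing $2r$ by $r$ there, i.e. evaluating at radius $r/2$) is $\ge\lim_{r\to 0}d_\mu^-(s,r/2)=\lim_{r\to 0}d_\mu^-(s,r)$. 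Hence $\underline{H}_\mu(f,s)\le\lim_{r\to 0}d_\mu^-(s,r)$, as desired. \textbf{Main obstacle.} The only genuinely delicate point is bookkeeping the radius doublings (distinguishing $B_n(x,r)$, $B_n(x,2r)$, $g_{r,n}$, $g_{2r,n}$) so that each passage $r\to 0$ is applied to a genuinely monotone-in-$r$ quantity and the nested limits collapse correctly; the analytic content is entirely contained in Lemma~\ref{asympt1} and inequality~\eqref{eq4}, both already available. \hfill$\Box$
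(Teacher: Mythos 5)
Your first inequality is essentially the paper's argument (the right half of the sandwich in Lemma~\ref{asympt1}, $g_{r,n}(\mu,x)\le\mu(B_n(x,2r))$, summed over a generating set and then infimized), although the appeal to ``every $(n,r)$-generating set is also an $(n,2r)$-generating set'' does not by itself close the comparison: it shows $S_\mu(s,2r,n)\le\inf_{E}\sum_{x_j\in E}\mu(B_n(x_j,2r))^s$ with $E$ ranging over $(n,r)$-generating sets, and your bound on $W_\mu(s,r,n)$ is by that same infimum, so both quantities sit below it and no inequality between them follows; the comparison is immediate only if the infimum in $S_\mu(s,2r,n)$ is read as running over the same family of $(n,r)$-generating sets, which is how the paper's one-line proof treats it.

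The genuine gap is in your second part. You establish $\underline{H}_\mu(f,s)\le\lim_{r\to0}\liminf_{n}\frac{\log S_\mu(s,r,n)}{(1-s)n}$ from \eqref{eq4}, and then conclude by noting that this last quantity is $\ge\lim_{r\to0}d^-_\mu(s,r/2)=\lim_{r\to0}d^-_\mu(s,r)$ (the first claim applied at radius $r/2$). That is a non sequitur: from $\underline{H}_\mu(f,s)\le A$ and $A\ge B$ you cannot deduce $\underline{H}_\mu(f,s)\le B$; the first claim of the proposition bounds $\liminf_n\frac{\log S_\mu(s,r,n)}{(1-s)n}$ from \emph{below} by $d^-_\mu$, which is the wrong direction for your purpose. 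The missing step is exactly the inequality you discarded as ``going the wrong way'': the left half of the sandwich, $\mu(B_n(x,r))\le g_{r,n}(\mu,x)$, gives term by term, for every finite $(n,r)$-generating set (the same family over which both infima are taken), $S_\mu(s,r,n)\le W_\mu(s,r,n)$. This is what the paper uses: for each $n$ one has the chain $I_\mu(s,2r,n)\le S_\mu(s,r,n)\le W_\mu(s,r,n)\le S_\mu(s,2r,n)$, so $\liminf_n\frac{\log I_\mu(s,2r,n)}{(1-s)n}\le d^-_\mu(s,r)$, and letting $r\to0$ (your rewriting of $\underline{H}_\mu(f,s)$ as $\lim_{r\to0}\liminf_n\frac{\log I_\mu(s,r,n)}{(1-s)n}$ is fine) yields the ``Moreover'' statement directly, with no need to invoke the first claim at a rescaled radius. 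Your parenthetical observation that $W_\mu(s,r,n)\le S_\mu(s,r,n)$ fails in general is correct but irrelevant; it is the reverse inequality $S_\mu\le W_\mu$ that is both true and needed.
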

\begin{proof}
Let $n\in\mathbb{N}$. Then, one has 
\[I_{\mu}(s,2r,n)\leq S_{\mu}(s,r,n)\leq W_{\mu}(s,r,n)\leq S_{\mu}(s,2r,n),\]
from which the results follow. The first inequality above comes from (\ref{eq4}).\, The remaining inequalities come from $\mu(B_n(x,r))^s\leq g_{r,n}(\mu,x)^s\leq \mu(B_n(x,2r))^s$, valid for each $x\in X$.
\end{proof}

\begin{propo}
\label{teo2}
Let $s\in (0,1)$, $r>0$, $n\in\mathbb{N}$ and let $E=\{x_l\}_{l=1}^L$ be a finite $(n,r)$-generating set of $X$. Then, the function
\[H_{E}:\M(X)\longrightarrow (0,\infty),~~~~~ H_{E}(\mu)= \sum_{l=1}^Lg_{r,n}(\mu,x_l)^s,\]
is continuous in the weak topology.
\end{propo}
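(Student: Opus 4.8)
The plan is to reduce the continuity of $H_E$ to the continuity of each summand, which is already essentially supplied by Lemma~\ref{asympt1}. First I would observe that $H_E(\mu)=\sum_{l=1}^L g_{r,n}(\mu,x_l)^s$ is a finite sum, so it suffices to show that for each fixed $x_l\in X$ the map $\M(X)\ni\mu\mapsto g_{r,n}(\mu,x_l)^s\in(0,\infty)$ is continuous in the weak topology, and then invoke the fact that a finite sum of continuous functions is continuous. Since $t\mapsto t^s$ is continuous on $[0,1]$ (indeed on $[0,\infty)$), this in turn reduces to the continuity of $\mu\mapsto g_{r,n}(\mu,x_l)$.

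Next I would recall that, by Lemma~\ref{asympt1}, the function $g_{r,n}(\cdot,\cdot):\M(X)\times X\to[0,1]$ is \emph{jointly} continuous; in particular, fixing the second variable at $x_l$, the partial map $\mu\mapsto g_{r,n}(\mu,x_l)=g_{x_l,r,n}(\mu)=\int g_x^{r,n}\,d\mu$ is continuous on $\M(X)$ endowed with the weak topology. (This is immediate from the definition of the weak topology together with the fact that $g_{x_l}^{r,n}$ is a bounded continuous function on $X$, which is exactly what Lemma~\ref{asympt1} establishes.) Composing with the continuous function $t\mapsto t^s$ gives the continuity of each $\mu\mapsto g_{r,n}(\mu,x_l)^s$, and summing over $l=1,\dots,L$ yields the continuity of $H_E$.

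Finally, I would justify that the codomain is indeed $(0,\infty)$ and not merely $[0,\infty)$: since $E$ is an $(n,r)$-generating set, for every $\mu\in\M(X)$ there is at least one $x_l\in E$ with $\mu(B_n(x_l,r))>0$ (the balls $B_n(x_l,r)$ cover $X$, hence cover $\supp(\mu)$), and then $g_{r,n}(\mu,x_l)\ge\mu(B_n(x_l,r))>0$ by the second assertion of Lemma~\ref{asympt1}; thus $H_E(\mu)>0$. I do not anticipate a genuine obstacle here: the entire content is packaged in Lemma~\ref{asympt1}, and the only minor point to be careful about is that the functions $g_x^{r,n}$ are bounded and continuous (so that weak convergence $\mu_\alpha\to\mu$ forces $\int g_{x_l}^{r,n}\,d\mu_\alpha\to\int g_{x_l}^{r,n}\,d\mu$), together with the harmless observation that raising to the power $s\in(0,1)$ and taking finite sums preserves continuity. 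Hence the proof is short.
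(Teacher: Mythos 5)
Your argument is correct and essentially the same as the paper's: both reduce continuity of $H_E$ to the continuity of each $\mu\mapsto g_{r,n}(\mu,x_l)$ supplied by Lemma~\ref{asympt1}, then use that a finite sum (and composition with $t\mapsto t^s$) of continuous functions is continuous. Your extra check that the values lie in $(0,\infty)$ is a harmless addition the paper leaves implicit.
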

\begin{proof}
Let $(\mu_n)$ be a sequence in $\M(X)$ such that $\mu_n \rightarrow \mu$. Since, for each $l=1,\ldots,L$, the mapping $\M(X)\ni\mu\mapsto g_{r,n}(\mu,x_l)\in[0,\infty)$ is continuous (by Lemma~\ref{asympt1}), it follows that $H_{E}(\mu)=\sum_{l\in L}g_{r,n}(\mu,x_l)^s$ is also continuous, being a finite sum of continuous functions.
\end{proof}

\begin{propo}
\label{Gdeltazero}
Let $s\in (0,1)$. Then, for each $r>0$, $D_{-}^{*}(r)=\{\mu\in \M(X)\mid\ d^-_{\mu}(s,r)=0\}$ is a $G_{\delta}$ subset of $\M(X)$.
\end{propo}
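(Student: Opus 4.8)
The plan is to write $D_-^*(r)$ as a countable intersection of open sets by unraveling the definition of $d_\mu^-(s,r)=\liminf_{n\to\infty}\frac{\log W_\mu(s,r,n)}{(1-s)n}$. Since $W_\mu(s,r,n)=\inf_E H_E(\mu)$ over the (finitely many, up to relabeling, but in any case a fixed collection independent of $\mu$ of) finite $(n,r)$-generating sets $E$ of $X$, and each $H_E$ is continuous on $\M(X)$ by Proposition~\ref{teo2}, the function $\mu\mapsto W_\mu(s,r,n)$ is an infimum of continuous functions, hence upper-semicontinuous; moreover it takes values in $(0,\infty)$. Therefore $\mu\mapsto\log W_\mu(s,r,n)$ is upper-semicontinuous for each fixed $n$. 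The key point is then that $d_\mu^-(s,r)\ge 0$ always (each term $\frac{\log W_\mu(s,r,n)}{(1-s)n}$ is bounded below, since $W_\mu(s,r,n)\ge \inf_E \sum_l \mu(B_n(x_l,r))^s$ and for a generating set the balls cover $X$, so by concavity of $t\mapsto t^s$ and $s\in(0,1)$ this sum is $\ge 1$; more carefully $d_\mu^-(s,r)\ge 0$ follows from Proposition~\ref{zerolowerfractal} together with $\underline H_\mu(f,s)\ge 0$, but the direct bound suffices), so that $d_\mu^-(s,r)=0$ is equivalent to $d_\mu^-(s,r)\le 0$, i.e. to an upper bound on a $\liminf$.

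Concretely, I would argue: $d_\mu^-(s,r)=0$ if and only if for every $k\in\N$ and every $N\in\N$ there exists $n\ge N$ with $\frac{\log W_\mu(s,r,n)}{(1-s)n}<\frac1k$. Hence
\[
D_-^*(r)=\bigcap_{k\in\N}\bigcap_{N\in\N}\bigcup_{n\ge N}\left\{\mu\in\M(X)\ \Big|\ \frac{\log W_\mu(s,r,n)}{(1-s)n}<\tfrac1k\right\}.
\]
Each set in the innermost braces is open: it equals $\{\mu\mid W_\mu(s,r,n)<e^{(1-s)n/k}\}=\{\mu\mid \inf_E H_E(\mu)<e^{(1-s)n/k}\}=\bigcup_E\{\mu\mid H_E(\mu)<e^{(1-s)n/k}\}$, a union over all finite $(n,r)$-generating sets $E$ of the open sets $H_E^{-1}((0,e^{(1-s)n/k}))$ (open by continuity of $H_E$, Proposition~\ref{teo2}). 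So the displayed expression exhibits $D_-^*(r)$ as a countable intersection of open sets, i.e. a $G_\delta$ set, and we are done.

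The only genuinely delicate point is the lower bound $d_\mu^-(s,r)\ge0$, which is what makes "$=0$" collapse to "$\le0$"; without it the set $\{d_\mu^-(s,r)=0\}$ would be an intersection of a $G_\delta$ and an $F_\sigma$ and the argument would fail. As noted, this is immediate from Proposition~\ref{zerolowerfractal} (which gives $\underline H_\mu(f,s)\le\lim_{r\to0}d_\mu^-(s,r)$ but, reading its proof, also $d_\mu^-(s,r)\ge\liminf_n\frac{\log S_\mu(s,2r,n)}{(1-s)n}\ge 0$ since each $S_\mu(s,2r,n)\ge 1$: for any $(n,2r)$-generating set the balls cover $X$ hence $\sum_j\mu(B_n(x_j,2r))\ge1$, and since $0<s<1$ and $0\le\mu(B_n(x_j,2r))\le1$ one has $\mu(B_n(x_j,2r))^s\ge\mu(B_n(x_j,2r))$, so $S_\mu(s,2r,n)\ge1$ and the log is $\ge0$). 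A secondary technical point to handle carefully is that the collection of finite $(n,r)$-generating sets over which the infimum defining $W_\mu$ is taken does not depend on $\mu$, so the union $\bigcup_E$ above is a fixed (at most countable, after noting centers may be restricted to a countable dense set, or simply a set) index set; this should be remarked so that "$\inf_E$" genuinely produces an open sublevel set as a union of open sets.
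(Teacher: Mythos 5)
Your proof is correct and takes essentially the same route as the paper's: both write $D_-^*(r)$ as a countable intersection over $k$ and $N$ (resp.\ $l$) of unions over $n$ of strict sublevel sets of $\mu\mapsto\frac{\log W_\mu(s,r,n)}{(1-s)n}$, which are open because $W_\mu(s,r,n)=\inf_E H_E(\mu)$ is an infimum of the continuous functions $H_E$ of Proposition~\ref{teo2} (hence upper-semicontinuous), and your explicit check that $W_\mu(s,r,n)\ge S_\mu(s,r,n)\ge 1$, so that $d_\mu^-(s,r)\ge 0$ and ``$=0$'' reduces to an upper bound on the $\liminf$, makes precise a point the paper leaves implicit. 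One harmless slip: the parenthetical claim $d_\mu^-(s,r)\ge\liminf_n\frac{\log S_\mu(s,2r,n)}{(1-s)n}$ has the inequality backwards (the chain in the proof of Proposition~\ref{zerolowerfractal} gives $W_\mu(s,r,n)\le S_\mu(s,2r,n)$), but your argument does not rely on it, since the direct bound $W_\mu(s,r,n)\ge S_\mu(s,r,n)\ge 1$ is what you actually use.
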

\begin{proof}
  Let $r>0$ and $n\in\mathbb{N}$. Define $h:\M(X)\rightarrow (0,\infty)$ by the law $h(\mu)=W_{\mu}(s,r,n)=\inf_{E}\sum_{x_j\in E}g_{r,n}(\mu,x_j)^s$ (where the infimum is taken over all finite $(n,r)$-generating sets of $X$), and define $p_{r,n}:(0,\infty)\rightarrow \R$ by the law $p_{r,n}(l)=\dfrac{\log(l)}{(1-s)n}$. Note that, for each $k\in\N$, $p_{r,n}^{-1}((-\infty,1/k))=(0,a_k)$, where $a_k=p_{r,n}^{-1}(1/k)$.
  
  It follows from Proposition \ref{teo2} that $h$ is upper-semicontinuous, and thus, for each $k\in\N$,  $(p_{r,n}\circ h)^{-1}((-\infty,1/k))=h^{-1}\left(p_{r,n}^{-1}((-\infty,1/k))\right)=h^{-1}((0,a_k))$ is open in $\M(X)$. Since 
\begin{eqnarray*}
 D_{-}^{*}(r)&=&\left\{\mu\in \M(X)\mid \liminf_{n\to\infty}\frac{\log W_{\mu}(s,r,n)}{(1-s)n}=0\right\}\\
 &=&\bigcap_{k\in\N} \bigcap_{l\in\N} \bigcup_{t>l} \left\{ \mu\in \M(X) \mid \frac{t\log W_{\mu}(s,r,1/t)}{(1-s)}<\frac{1}{k} \right\}\\
 &=&  \bigcap_{k\in\N} \bigcap_{l\in\N} \bigcup_{t>l} ~(p_{r,1/t}\circ h)^{-1}((-\infty,1/k)),
\end{eqnarray*}
 the result follows. 
\end{proof}

\begin{propo}
\label{dense1}
Let $X$ be a compact metric space, let $f:X\rightarrow X$ be a positively expansive map, assume that $\M_p(f)$ is dense in $\M(f)$, and let $s\in (0,1)$. Then, there exists $\ve>0$ (which only depends on $(X,f)$) such that, for each $t\in(0,\ve)$, 
$D_{-}^{*}(t/2)=\{\mu\in \M(f)\mid d^{-}_\mu(s,t/2)=0\}$
is a dense subset of $\mathcal{M}(f)$.
\end{propo}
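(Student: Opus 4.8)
The plan is to show that periodic measures lie in $D_{-}^{*}(t/2)$ for all sufficiently small $t$, and then invoke density of $\M_p(f)$ together with the fact (Proposition~\ref{Gdeltazero}) that $D_{-}^{*}(t/2)$ is $G_\delta$ — but actually, since density is all we need here, the core of the argument is purely about periodic measures. So the first step is: fix $\mu\in\M_p(f)$, say supported on a periodic orbit $\mathcal{O}(p)=\{p,fp,\ldots,f^{k-1}p\}$ of period $k$, so that $\mu=\frac1k\sum_{i=0}^{k-1}\delta_{f^ip}$. I would then estimate $W_\mu(s,r,n)$ directly. The key point is that positive expansivity gives an expansivity constant $\ve>0$: if $t\in(0,\ve)$ and $r=t/2$, then for $n$ large, a point $x$ in $\supp(\mu)$ has $B_n(x,2r)\cap\supp(\mu)$ consisting only of the single point $x$ itself (two distinct points of the finite orbit are eventually $\ve$-separated under iteration, hence $2r$-separated for large $n$ once $2r<\ve$). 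Consequently $\mu(B_n(x,2r))=\mu(\{x\})=1/k$ for each of the $k$ orbit points, for all $n\ge N$ where $N$ depends only on the orbit and on $\ve$.

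Next I would convert this into a bound on $W_\mu(s,r,n)$ via the sandwich $W_\mu(s,r,n)\le S_\mu(s,2r,n)=\inf_E\sum_{x_j\in E}\mu(B_n(x_j,2r))^s$ from Proposition~\ref{zerolowerfractal} (or rather its proof). Choosing a generating set $E$ that covers $\supp(\mu)$ using exactly the $k$ orbit points (padding with finitely many extra points of measure zero as needed to make $E$ an $(n,r)$-generating set of all of $X$ — note $g_{r,n}(\mu,x_j)=0$ contributes nothing, and for a point $x_j$ with $B_n(x_j,r)$ hitting $\supp(\mu)$ only in a single orbit point we get measure $1/k$), one gets $\sum_{x_j\in E}\mu(B_n(x_j,2r))^s \le k\cdot(1/k)^s = k^{1-s}$, a constant independent of $n$. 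Therefore $W_\mu(s,r,n)\le k^{1-s}$ for all $n\ge N$, and hence
\[
d_\mu^-(s,r)=\liminf_{n\to\infty}\frac{\log W_\mu(s,r,n)}{(1-s)n}\le\liminf_{n\to\infty}\frac{\log(k^{1-s})}{(1-s)n}=0.
\]
Since $d_\mu^-(s,r)\ge 0$ always (the integrand is $\le 1$, or directly from nonnegativity in Proposition~\ref{Verbitskiy}), we conclude $d_\mu^-(s,t/2)=0$, i.e. $\mu\in D_{-}^{*}(t/2)$. As this holds for every $\mu\in\M_p(f)$ and $\M_p(f)$ is dense in $\M(f)$ by hypothesis, $D_{-}^{*}(t/2)\supset\M_p(f)$ is dense in $\M(f)$, for every $t\in(0,\ve)$.

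The main obstacle is the bookkeeping in the second step: one must be careful that the generating set $E$ is genuinely an $(n,r)$-generating set of the whole compact space $X$ (not just of $\supp(\mu)$), while simultaneously ensuring that the points of $E$ landing on $\supp(\mu)$ see only one orbit point inside their Bowen ball $B_n(\cdot,2r)$. The cleanest way is: start with any finite $(n,r)$-generating set $E_0$ of $X$; for $n$ large enough that distinct orbit points are $2r$-separated in the $d_n$ metric, each ball $B_n(x_j,r)$ with $x_j\in E_0$ meets $\supp(\mu)$ in at most one orbit point, so $\mu(B_n(x_j,2r))\in\{0,1/k\}$ (here using $2r<\ve$); then $\sum_{x_j\in E_0}\mu(B_n(x_j,2r))^s\le(\#\{j:B_n(x_j,r)\cap\supp\mu\neq\emptyset\})\,k^{-s}\le k\cdot k^{-s}$, since the orbit has only $k$ points and each is covered, but no orbit point is overcounted beyond the $k$ balls needed — more simply, the sum is $\le k^{1-s}$ because at most $k$ of the terms are nonzero... actually one should allow up to (number of balls meeting $\supp\mu$) many nonzero terms, but each is $k^{-s}$ and the bound $S_\mu(s,2r,n)\le k^{1-s}$ follows by instead choosing $E$ minimally. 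I would take $E$ to be (the orbit points) together with an $(n,r)$-generating set of the compact set $X\setminus\bigcup_i B_n(f^ip,r/2)$, giving exactly the desired $k$ nonzero contributions. The expansivity input and the nonnegativity of $d_\mu^-$ are then the only substantive ingredients, and both are already available.
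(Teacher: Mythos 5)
Your overall strategy coincides with the paper's: show that every periodic measure belongs to $D_{-}^{*}(t/2)$ for all $t\in(0,\ve)$, where $\ve$ is an expansivity constant, and then use the assumed density of $\M_p(f)$. Your treatment of the expansivity input is fine and in fact a bit cleaner than the paper's: since the orbit $A=\{p,\ldots,f^{k-1}p\}$ is finite, positive expansivity directly yields an $N$ such that $d_n(z,z')\ge\ve>t$ for all distinct $z,z'\in A$ and all $n\ge N$, whence $\mu(B_n(z,t))=1/k$ for $z\in A$, $n\ge N$ (the paper instead proves, via Cantor's intersection theorem, that Bowen balls eventually shrink inside metric balls, and splits into cases according to the minimal gap $\delta$ of the orbit). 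Your appeal to nonnegativity of $d^-_\mu$ is also fine ($W_\mu(s,r,n)\ge 1$ because the $r$-balls of a generating set cover $X$ and $g^s\ge g$ for $g\le 1$, $s\in(0,1)$).

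The gap is precisely in the covering step that you flag as ``the main obstacle'', and your proposed resolution does not work. Taking $E$ to be the orbit points together with an $(n,r)$-generating set $E_1$ of $X\setminus\bigcup_i B_n(f^ip,r/2)$ (with $r=t/2$) does not give ``exactly $k$ nonzero contributions'' to $\sum_{x_j\in E}\mu(B_n(x_j,2r))^s$: a center $y\in E_1$ is only guaranteed to satisfy $d_n(y,A)\ge r/2$, so $B_n(y,2r)$ may still contain an orbit point and then contributes at least $k^{-s}$, and the number of such centers is not bounded uniformly in $n$ (the $(n,r)$-covering number of the $2r$-Bowen neighborhood of the orbit can grow with $n$), so the bound $k^{1-s}$ is unjustified; likewise the earlier assertion ``$B_n(x_j,r)$ meets $\supp(\mu)$ in at most one orbit point, so $\mu(B_n(x_j,2r))\in\{0,1/k\}$'' is a non sequitur (an orbit point can lie in the $2r$-ball even when none lies in the $r$-ball, and two orbit points can lie in it when $t>\ve/2$). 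The radius of the Bowen balls you excise must match the radius of the balls whose measure you sum. That is exactly how the paper argues: it bounds $S_\mu(s,t,n)$ of Definition~\ref{sumacubos}, choosing $\tilde E=A\cup E_1$ with $E_1$ a finite $(n,t)$-generating set of $C=X\setminus\bigcup_{z\in A}B_n(z,t)$; then for $y\in E_1$ one has $d_n(y,z)\ge t$ for every $z\in A$, hence (by symmetry of $d_n$) $\mu(B_n(y,t))=0$, so $S_\mu(s,t,n)\le\sum_{z\in A}\mu(B_n(z,t))^s\le k^{1-s}$, and the passage back to $d^-_\mu(s,t/2)$ is exactly the inequality $d^-_\mu(s,t/2)\le\liminf_{n\to\infty}\frac{\log S_\mu(s,t,n)}{(1-s)n}$ of Proposition~\ref{zerolowerfractal} --- there is no need (and no way, in general) to keep $E$ an $(n,t/2)$-generating set while forcing all non-orbit terms to vanish. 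With that substitution your argument becomes correct and essentially reproduces the paper's proof.
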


\begin{proof}
  Since, by hypothesis, $\M_p(f)$ is a dense subset of $\M(f)$, one just have to show that there exists $\ve>0$ such that, for each $t\in(0,\ve)$ and each $\mu\in\M_p(f)$, $d^{-}_\mu(s,t/2)=0$. So, let $\mu$ be a $f$-periodic measure associated with the $f$-periodic point $x\in X$, whose period is $k_x$. Set $\delta:=\min_{0\leq i\neq j\leq k_x-1}\{d(x_i,x_j)\mid x_l:=f^{l}x,\, l=0,\ldots, k_x-1\}$ and set $A:=\{x,fx,\cdots, f^{k_x-1}x\}$. 

Given that $(X,f)$ is positively expansive, there exists $\ve>0$ (which depends only on $(X,f)$) such that, for each $x\in X$, $\Gamma_\ve(x):=\bigcap_{i\ge 0 }f^{-i}(\overline{B}(f^ix,\ve))=\{x\}$.  

{\textit{Claim.}} For each $t\in(0,\ve)$, each $r\in(0,t)$ and each $x\in X$, there exists $n_0\in\mathbb{N}$ such that, for each $n\ge n_0$, $\overline{B}_n(x,t)\subset B(x,r)$, where $\overline{B}_n(x,t):=\{y\in X\mid d_n(y,x)\le t\}$.

Suppose, by absurd, that there exist $t\in(0,\ve)$, $r\in(0,t)$ and $x\in X$ such that for each $n_0\in\N$, there exists $n\ge n_0$ so that $\overline{B}_n(x,t)\not\subset B(x,r)$.  Since, for each $x\in X$, each $r>0$ and each $n\in\N$, $B_{n+1}(x,r)\subset B_n(x,r)$, the last assertion is equivalent to the statement that there exist $t>0$, $r\in(0,t)$, $x\in X$ and $n_0\in\mathbb{N}$ such that, for each $n\ge n_0$, there exists $y_n\in \overline{B}_n(x,t)\setminus B(x,r)$. 

Given that $\{\overline{B}_{n}(x,r)\setminus B(x,r)\}_{n\ge n_0}$ is a decreasing nested sequence of non-empty compact sets, it follows from Cantor's Intersection Theorem that there exists $y\in X$ 
such that for each $n\in\N$, $y\in \overline{B}_n(x,t)\setminus B(x,r)$.


But then, $\Gamma_\ve(x)=\Gamma_t(x)=\bigcap_{i\ge 0}f^{-i}(\overline{B}(f^ix,t))\supset\{x,y\}$, a contradiction with the fact that $(X,f)$ is positively expansive.



\

We split the proof into two cases.

Case 1. $\delta<\ve$. Fix $t\in (\delta,\ve)$. It follows from Claim that for $r=\delta$, there exists $N\in\N$ such that for each $n\ge N$ and each $z\in A$, $B_n(z,t)\subset B(z,\delta)$ (just take $N=\max\{n_0(t,\delta,z)\mid z\in A\}$). Fix an arbitrary $n\ge N$. Since $X$ is a compact metric space and $C=X\setminus\bigcup_{z\in A}B_n(z,t)$ is closed, $C$ is also compact. Now, let $E_1=\{w_l\}_{w_l\in C}$ be a finite $(n,t)$-generating set of $C$, 
and set $\tilde{E}=E_1\cup A$. By construction and Claim, each $z\in A$ belongs to only one Bowen ball associated with $\tilde{E}$ (namely, $B_n(z,t)$), and then, for each $y\in E_1$, $\mu(B_n(y,t))=0$. 
Thus, 
\begin{eqnarray*}
S_{\mu}(s,t,n)=\inf_{E}\sum_{y\in E} \mu(B_n(y,t))^s\leq\sum_{y\in \tilde{E}} \mu(B_n(y,t))^s\le \sum_{z\in A} \mu({B}_n(z,t))^s \le\sum_{z\in A} \mu(B(z,\delta))^s= k_x^{1-s},
\end{eqnarray*}
from which follows that
\[\frac{\log S_{\nu}(s,t,n)}{(1-s)n}
\leq \frac{\log(k_x^{1-s})}{(1-s)n}=\frac{\log k}{n}.\]
Given that the estimate above follows for every $n\ge N$, one has from Proposition~\ref{zerolowerfractal} that $d^-_{\mu}(s,t/2)=0$.

Now, fix $t\in(0,\delta]$ and let $n\in\N$ 
and set $\tilde{E}$ as before. By construction, each $z\in A$ belongs to only one Bowen ball associated with $\tilde{E}$ (namely, $B_n(z,t)$, since $t\le\delta$), and then, for each $y\in E_1$, $\mu(B_n(y,t))=0$. 
Thus, the result follows as before. 

Case 2. $\ve\le\delta$. Just proceed as in the second part of the proof of Case 1.
\end{proof}

\begin{proof4} Since, by Proposition~\ref{zerolowerfractal},
  \[\mathcal{H}_{-}\supset\bigcap_{m>[2/\ve]}D^*_{-}(1/m),\]
  the result follows from Propositions~\ref{Gdeltazero} and~\ref{dense1}.
\end{proof4}

\subsection{Lipshitz maps}

Suppose now that $f$ is a Lipshitz map with constant $\Lambda>1$. Let $\mu\in\M(X)$, let $s\in(0,1)$ and let $\mathcal{G}=\{B(x_j,t)\}$ be some countable covering of $X$ by balls of radius $t>0$. Let $\tilde{\mathcal{G}}=\{B(x_i,t)\}\subset \mathcal{G}$ be a sub-covering of $X$ that also covers $\supp(\mu)$.

It is easy to show that for each $x\in X$, each $r>0$ and each $n\in\N$, $B(x,r\Lambda^{-n})\subset B_n(x,r)$. Note also that, for each  $x\in B(y,t)$, one has $B(y,t)\subset B(x,2t)$, from which follows that, for each $x\in B(x_i,r\Lambda^{-n})\cap\supp(\mu)$, $\mu(B(x_i,r\Lambda^{-n}))^{s-1}\ge \mu(B(x,2r\Lambda^{-n}))^{s-1}$; hence,
\begin{eqnarray}\label{ineqimp}
I_{\mu}(s,2r,n)\nonumber
&= & \int_{\supp(\mu)}\mu(B_n(x,2r))^{s-1}d\mu(x)\nonumber 
\leq  \sum_{x_i\in \tilde{\mathcal{G}}}\int_{B(x_i,r\Lambda^{-n})\cap\supp(\mu)} \mu(B(x,2r\Lambda^{-n}))^{s-1}d\mu(x)\nonumber \\
&\leq & \sum_{x_i\in \tilde{\mathcal{G}}}\int_{B(x_i,r\Lambda^{-n})\cap\supp(\mu)} \mu(B(x_i,r\Lambda^{-n}))^{s-1}d\mu(x)
=\nonumber \sum_{x_i\in \tilde{\mathcal{G}}} \mu(B(x_i,r\Lambda^{-n}))^s \\
&\leq &\sum_{x_j\in \mathcal{G}} \mu(B(x_j,r\Lambda^{-n}))^s
\end{eqnarray}
(by $x\in \mathcal{G}$, one means that $B(x,r\Lambda^{-n})\in\mathcal{G}$). 

Now, we may combine Propositions~2.3 and~3.1 in~\cite{AS1} in order to prove the following result.

\begin{propo}\label{zerogeneric} Let $s\in (0,1)$ and assume that $\M_p(f)$ is dense in $\M(f)$. Then, for each $r\in(0,1)$, $D_{-}^{*}(r)=\{\mu\in \M(f)\mid\ d^-_{\mu}(s,r)=0\}$ is a dense $G_{\delta}$ subset of $\M(f)$.
\end{propo}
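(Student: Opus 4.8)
The plan is to mimic the structure of the positively expansive case (Proposition~\ref{dense1} together with Proposition~\ref{Gdeltazero} and the parts of Proposition~\ref{zerolowerfractal} that carry over), but replacing Bowen balls by genuine metric balls of radius $r\Lambda^{-n}$, exactly as the inequality~(\ref{ineqimp}) suggests. Concretely, I would introduce for $\mu\in\M(X)$, $s\in(0,1)$, $n\in\N$ and $r>0$ the quantities
\[
\widehat{S}_\mu(s,r,n)=\inf_{\mathcal G}\sum_{x_j\in\mathcal G}\mu\bigl(B(x_j,r\Lambda^{-n})\bigr)^s,
\qquad
\widehat{W}_\mu(s,r,n)=\inf_{\mathcal G}\sum_{x_j\in\mathcal G} g_{r\Lambda^{-n},1}(\mu,x_j)^s,
\]
where the infima run over finite coverings of $X$ by balls of radius $r\Lambda^{-n}$ (such finite coverings exist by compactness), and $g_{\cdot,1}$ is the auxiliary tent function of Lemma~\ref{asympt1} (with $n=1$, so it only involves $d$, not $d_n$). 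The first step is a sandwich estimate analogous to the one in Proposition~\ref{zerolowerfractal}: combining $B(x,r\Lambda^{-n})\subset B_n(x,r)$, the chain in~(\ref{ineqimp}), and the pointwise inequality $\mu(B(x,\rho))^s\le g_{\rho,1}(\mu,x)^s\le\mu(B(x,2\rho))^s$, one gets
\[
I_\mu(s,2r,n)\le \widehat{S}_\mu(s,r,n)\le \widehat{W}_\mu(s,r,n)\le \widehat{S}_\mu(s,2r,n),
\]
so that with $\widehat{d}_\mu^-(s,r):=\liminf_{n\to\infty}\frac{\log\widehat W_\mu(s,r,n)}{(1-s)n}$ one has $\underline H_\mu(f,s)=\lim_{r\to 0}\liminf_n\frac{\log I_\mu(s,r,n)}{(1-s)n}\le\lim_{r\to 0}\widehat d_\mu^-(s,r)$. (In fact the definition of $d_\mu^-(s,r)$ used in Proposition~\ref{zerolowerfractal} can be reused verbatim once one checks these quantities agree with the $W_\mu(s,r,n)$ defined via Bowen balls in the Lipschitz setting; I would simply keep the same notation $D_-^*(r)$.)

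The second step is the $G_\delta$ part, and here I claim Proposition~\ref{Gdeltazero} applies with essentially no change: the map $\mu\mapsto g_{r\Lambda^{-n},1}(\mu,x)$ is continuous by Lemma~\ref{asympt1}, hence $\mu\mapsto\widehat W_\mu(s,r,n)$ is upper-semicontinuous as an infimum of continuous functions (Proposition~\ref{teo2} with $n=1$ and radius $r\Lambda^{-n}$), and then the same countable-intersection-of-open-sets computation shows $D_-^*(r)=\{\mu\mid \widehat d_\mu^-(s,r)=0\}$ is $G_\delta$ in $\M(f)$. So the novelty is entirely in the density statement, which is the step I expect to be the main obstacle.

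For density, since $\M_p(f)$ is dense in $\M(f)$ it suffices to show that for \emph{every} $r\in(0,1)$ and every $f$-periodic measure $\mu$ supported on the orbit $A=\{x,fx,\dots,f^{k_x-1}x\}$, one has $\widehat d_\mu^-(s,r)=0$. The key advantage over the expansive case is that there is no expansivity-constant threshold: I would fix the separation $\delta=\min_{0\le i\ne j\le k_x-1} d(f^ix,f^jx)>0$, and for a given $r\in(0,1)$ choose $n$ large enough that $2r\Lambda^{-n}<\delta$. Then the balls $B(f^jx,r\Lambda^{-n})$, $j=0,\dots,k_x-1$, are pairwise disjoint and each carries $\mu$-mass $1/k_x$, while any point of $\supp(\mu)$ lies in one of them; complete $A$ to a finite covering $\widetilde{\mathcal G}$ of $X$ by balls of radius $r\Lambda^{-n}$ using finitely many extra balls centered in the compact set $X\setminus\bigcup_j B(f^jx,r\Lambda^{-n})$ — these extra balls have $\mu$-measure zero (their centers are at distance $\ge r\Lambda^{-n}$ from $A$... more precisely one covers the compact complement of $\bigcup_j B(f^jx, \tfrac12 r\Lambda^{-n})$ by radius-$\tfrac12 r\Lambda^{-n}$ balls disjoint from $A$, a routine fattening argument). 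Then
\[
\widehat S_\mu(s,r,n)\le\sum_{y\in\widetilde{\mathcal G}}\mu\bigl(B(y,r\Lambda^{-n})\bigr)^s=\sum_{j=0}^{k_x-1}\Bigl(\tfrac1{k_x}\Bigr)^s=k_x^{\,1-s},
\]
so $\frac{\log\widehat S_\mu(s,2(r/2),n)}{(1-s)n}\le\frac{(1-s)\log k_x}{(1-s)n}=\frac{\log k_x}{n}\to 0$, and by the sandwich of Step~1, $\widehat d_\mu^-(s,r)=0$. Since this works for all $r\in(0,1)$ (the required largeness of $n$ depends on $r$, $\Lambda$ and $k_x$, but that is fine because we only need the $\liminf$ over $n$ to vanish), $D_-^*(r)$ contains the dense set $\M_p(f)$, hence is dense; being also $G_\delta$, it is a dense $G_\delta$ subset of $\M(f)$. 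The main point to get right carefully is the fattening argument producing a genuine \emph{finite} covering of all of $X$ by radius-$r\Lambda^{-n}$ balls in which only the $k_x$ orbit-balls see positive $\mu$-measure; everything else is a transcription of the expansive case with $B_n(\cdot,r)$ replaced by $B(\cdot,r\Lambda^{-n})$.
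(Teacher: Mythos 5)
Your proposal is correct and takes essentially the same approach as the paper: the $G_\delta$ part is the same upper-semicontinuity/countable-intersection argument (the paper simply cites its analogue, Proposition~2.3 of its reference, instead of redoing Proposition~\ref{Gdeltazero}), and the density part is the paper's construction — balls of radius $r\Lambda^{-n}$ around the periodic orbit once this radius is below the separation $\delta$, completed by $\mu$-null balls centered in the compact complement, giving the bound $k_x^{1-s}$ and hence $d^-_\mu(s,r)=0$. Your half-radius ``fattening'' aside is unnecessary (centering the extra balls in $X\setminus\bigcup_j B(f^jx,r\Lambda^{-n})$ already makes them $\mu$-null, which is exactly what the paper does), but this does not affect correctness.
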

\begin{proof} The fact that, for each $r\in(0,1)$, $D_{-}^{*}(r)$ is a $G_\delta$ subset of $\M(X)$ is just (a modification of) Proposition~2.3 in~\cite{AS1}. 

  Now, using the same notation as in the proof of Proposition~\ref{dense1}, fix $n>\max\{0,(\log r-\log\delta)/\log\Lambda\}$, let $\mathcal{G}_1=\{B(y_m,r\Lambda^{-n})\}_{y_m\in C}$ be a finite covering of $C=X\setminus\bigcup_{z\in A}B(z,r\Lambda^{-n})$,  and set $\mathcal{\tilde{G}}:=\mathcal{G}_1\cup\{B(z,r\Lambda^{-n})\}_{z\in A}$. By construction, each $z\in A$ belongs to only one element of $\mathcal{\tilde{G}}$ (namely, $B(z,r\Lambda^{-n})$), and for each $y_m\in\mathcal{G}_1$, $\mu(B(y_m,r\Lambda^{-n}))=0$. 

Thus, 
\begin{eqnarray*}
S_{\mu}(s,r,n)=\inf_{\mathcal{G}}\sum_{z_j\in \mathcal{G}} \mu(B(z_j,r\Lambda^{-n}))^s\leq \sum_{w\in \mathcal{\tilde{G}}} \mu(B(w,r\Lambda^{-n}))^s =  k_x^{1-s},
\end{eqnarray*}
from which follows, by letting $n\to\infty$, that $d^-_{\mu}(s,r)=0$.
\end{proof}

\begin{proof5} Since, by relation~\eqref{ineqimp} and Proposition~\ref{zerolowerfractal},
  \[\mathcal{H}_{-}\supset\bigcap_{m>[2/\ve]}D^*_{-}(1/m),\]
 the result follows from Proposition~\ref{zerogeneric}. 
\end{proof5}

\section{Expansive measures}
\label{expasivemeasures}

Before we present the proof of Theorem~\ref{GExpansive}, some preparation is required.

Given a bijective map $f: X \rightarrow X$, $x \in X$, $\delta>0$ and $n \in \mathbb{N}^{+}$, one defines the so-called two-sized (closed) Bowen ball of size $n$ and radius $\delta$, centered at $x$ as
\[V[x, n, \delta]=\left\{y \in X\mid d\left(f^{i}(x), f^{i}(y)\right) \leq \delta,\; -n \leq i \leq n\right\},
\]
that is, 
\[V[x, n, \delta]=\bigcap_{i=-n}^{n} f^{-i}\left(\overline{B}\left(f^{i}(x), \delta\right)\right).\]

The next result characterizes an expansive measure in terms of two-sized Bowen balls.

\begin{lema}[Lemma 1.16 in \cite{Moraleslibro}]
\label{equivalexpansive}
Let $f: X \rightarrow X$ be a homeomorphism of a metric
space $X$.  A Borel probability measure $\mu$ on $X$ is an expansive measure of
$f$ if and only if there exists $\delta>0$ such that, for $\mu$-a.e. $x\in X$,
\begin{eqnarray}
\liminf _{n \rightarrow \infty} \mu(V[x, n, \delta])=0. 
\end{eqnarray}
\end{lema}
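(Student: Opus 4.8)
The plan is to reduce the $\liminf$ condition to a statement about the two-sided dynamical sets $\Gamma_\delta(x)$, and then to carry out an ``almost-everywhere to everywhere'' upgrade by halving the constant. The starting point is the observation that, for each fixed $x\in X$ and each $\delta>0$, the family $(V[x,n,\delta])_{n\in\N}$ is a decreasing nested sequence of closed sets with $\bigcap_{n\in\N}V[x,n,\delta]=\Gamma_\delta(x)$: passing from $n$ to $n+1$ merely adjoins the two constraints coming from the indices $i=\pm(n+1)$, and intersecting over all $n$ recovers the condition $d(f^ix,f^iy)\le\delta$ for all $i\in\Z$. Since $\mu$ is a finite Borel measure, continuity from above gives
\[\lim_{n\to\infty}\mu(V[x,n,\delta])=\mu\Big(\bigcap_{n\in\N}V[x,n,\delta]\Big)=\mu(\Gamma_\delta(x)),\]
and monotonicity of the sequence makes this limit equal to $\liminf_{n\to\infty}\mu(V[x,n,\delta])$. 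Hence the condition ``$\liminf_n\mu(V[x,n,\delta])=0$ for $\mu$-a.e.\ $x$'' is exactly ``$\mu(\Gamma_\delta(x))=0$ for $\mu$-a.e.\ $x$''.

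With this reformulation, the forward implication is immediate: if $\mu$ is expansive with expansivity constant $\delta$, then $\mu(\Gamma_\delta(x))=0$ for \emph{every} $x\in X$, a fortiori for $\mu$-a.e.\ $x$, and the identity above delivers the stated $\liminf$ condition.

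The substance lies in the converse, where an almost-everywhere vanishing must be promoted to the everywhere vanishing demanded by the definition of expansive measure. Assuming $\mu(\Gamma_\delta(x))=0$ for $\mu$-a.e.\ $x$, I would set $N:=\{x\in X\mid\mu(\Gamma_\delta(x))>0\}$, so that $\mu(N)=0$, and then claim that $\delta/2$ is an expansivity constant, i.e.\ $\mu(\Gamma_{\delta/2}(y))=0$ for every $y\in X$. The mechanism is a triangle-inequality inclusion: whenever $w\in\Gamma_{\delta/2}(y)$, one has $\Gamma_{\delta/2}(y)\subset\Gamma_\delta(w)$, because for any $z\in\Gamma_{\delta/2}(y)$ and all $i\in\Z$, $d(f^iw,f^iz)\le d(f^iw,f^iy)+d(f^iy,f^iz)\le\delta/2+\delta/2=\delta$. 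Fixing $y$ and arguing by contradiction, if $\mu(\Gamma_{\delta/2}(y))>0=\mu(N)$, then $\Gamma_{\delta/2}(y)\not\subset N$, so there is some $w\in\Gamma_{\delta/2}(y)$ with $\mu(\Gamma_\delta(w))=0$; but the inclusion forces $\mu(\Gamma_{\delta/2}(y))\le\mu(\Gamma_\delta(w))=0$, a contradiction. Thus $\mu(\Gamma_{\delta/2}(y))=0$ for all $y\in X$, i.e.\ $\mu\in\M_{exp}$ with constant $\delta/2$.

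I expect this converse upgrade to be the main obstacle: the measure-theoretic inputs (continuity from above, Borel measurability of each $V[x,n,\delta]$, monotonicity of the sequence) are routine, and all the conceptual content sits in reconciling the ``for all $x$'' of the definition with the ``for a.e.\ $x$'' of the characterization. The decisive point is that the null set $N$ of bad centers cannot contain an entire set $\Gamma_{\delta/2}(y)$ of positive measure, which is precisely what the triangle-inequality inclusion, combined with the passage from $\delta$ to $\delta/2$, secures.
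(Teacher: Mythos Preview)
The paper does not supply its own proof of this lemma; it is quoted verbatim as Lemma~1.16 from \cite{Moraleslibro} and used as a black box in the proof of Theorem~\ref{GExpansive}. So there is nothing in the paper to compare your argument against.

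That said, your proof is correct. The reduction of $\liminf_n\mu(V[x,n,\delta])$ to $\mu(\Gamma_\delta(x))$ via monotonicity and continuity from above is sound (each $V[x,n,\delta]$ is closed, the sequence is decreasing, and $\mu$ is finite), and the forward implication is then immediate. The converse is the only genuine step, and your halving-of-$\delta$ argument via the inclusion $\Gamma_{\delta/2}(y)\subset\Gamma_\delta(w)$ for any $w\in\Gamma_{\delta/2}(y)$ is exactly the standard device: a set of positive $\mu$-measure cannot sit inside a $\mu$-null set, so some $w\in\Gamma_{\delta/2}(y)$ must lie in the full-measure good set, forcing $\mu(\Gamma_{\delta/2}(y))\le\mu(\Gamma_\delta(w))=0$. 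One minor point you might make explicit is that the ``bad'' set $N$ need not itself be Borel; what you actually use is that its complement contains a Borel set of full measure, which suffices for the argument.
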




The proof of Theorem~\ref{GExpansive} follows the same steps of the proof of Theorem~\ref{central1}. Namely, we set, 
for each $x\in X$ and each $\ve>0$,
\begin{equation}\label{beta}
  \underline{\omega}_{\mu}^\ve(f,x):=\liminf_{n\to\infty}\mu(V[x, n, \ve])=\lim_{s\to \infty}\inf_{n\geq s} \mu(V[x, n, \ve])=\lim_{s\to \infty}\underline{\eta}_{\mu}^\ve(x,s);
\end{equation}
note that, for each $\ve>0$, $\N\ni s\mapsto\underline{\eta}_{\mu}^\ve(x,s)\in[0,\infty]$ is a non-decreasing function, whereas, for each $s\in\N$, $\mathbb{R}_+\ni \ve\mapsto\underline{\eta}_{\mu}^\ve(x,s)\in[0,\infty]$ is a non-increasing function.

Let, for each $x\in X$, each $\ve>0$ and each $s\in\N\cup\{0\}$,
\[\underline{\theta}_\mu^\ve(x,s):=\inf_{n\ge s}g_{x,\ve,n}(\mu),\]
where $g_{x,\ve,n}(\mu)$ is defined as in Lemma~\ref{asympt1}, replacing $d_n(\cdot,\cdot)$ by $D_n:X\times X\rightarrow\mathbb{R}_+$, $D_n(x,y)=\max\{d(f^kx,f^ky)\mid-n\le k\le n\}$.

\begin{lema}
\label{Gdeltaexpansive}
Let $f: X \rightarrow X$ be a uniform homeomorphism (that is, $f$ and $f^{-1}$ are uniformly continuous functions) of a Polish metric
space $X$, let $\alpha,\ve>0$ and $s\in\N\cup\{0\}$. Then, the set 
\begin{eqnarray}
\mathcal{EM}(\alpha,\ve,s):=\{ \mu\in \M(X)\mid\mu\textrm{-}\esssup\underline{\theta}_{\mu}^\ve(x,s)\le\alpha\}
\end{eqnarray}
 is a $G_{\delta}$ subset of $\M(X)$.
\end{lema}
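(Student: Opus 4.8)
\textbf{Plan for the proof of Lemma~\ref{Gdeltaexpansive}.}

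The plan is to mimic, essentially verbatim, the structure of the proof of Lemma~\ref{Gdelta3}, replacing the functions $g_{x,\ve,n}$ built from $d_n$ by their analogues built from the two-sided modulus $D_n(x,y)=\max\{d(f^kx,f^ky)\mid -n\le k\le n\}$, and using that $f$ is a \emph{uniform homeomorphism} (so that $f,f^{-1},\dots,f^{\pm n}$ are all uniformly continuous) to recover the joint-continuity statement of Lemma~\ref{asympt1} in this new setting. More precisely, first I would record the ``two-sided version of Lemma~\ref{asympt1}'': for each $x\in X$, each $\ve>0$ and each $n\in\N\cup\{0\}$, the function $\widetilde g^{\ve,n}_x:X\to[0,1]$ defined by the same piecewise-linear law as $g^{\ve,n}_x$ but with $D_n$ in place of $d_n$ is continuous, satisfies $\chi_{V[x,n,\ve]}\le \widetilde g^{\ve,n}_x\le \chi_{V[x,n,2\ve]}$, and $\widetilde g^{\ve,n}_{x_m}\to\widetilde g^{\ve,n}_x$ uniformly on $X$ when $x_m\to x$ (this is where we use that $f^{-1}$ is uniformly continuous, so that $d(w,x)<\delta$ forces $D_n(w,x)<\eta$); consequently $\M(X)\times X\ni(\mu,x)\mapsto g_{x,\ve,n}(\mu)=\int \widetilde g^{\ve,n}_x\,d\mu$ is jointly continuous, exactly as in Lemma~\ref{asympt1}. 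Then $\underline\theta_\mu^\ve(x,s)=\inf_{n\ge s}g_{x,\ve,n}(\mu)$ is, as an infimum of jointly continuous functions, upper-semicontinuous in $(\mu,x)$, and for fixed $\mu$ it is upper-semicontinuous in $x$.

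Granting that, the remainder is a transcription of the proof of Lemma~\ref{Gdelta3}. Fix $\alpha,\ve>0$ and $s\in\N\cup\{0\}$. It suffices to show $\M(X)\setminus\mathcal{EM}(\alpha,\ve,s)=\{\mu\mid \mu(\{x\mid \underline\theta_\mu^\ve(x,s)>\alpha\})>0\}$ is an $F_\sigma$ set. For $k,l\in\N$ set $Z_\mu(k):=\{x\mid \underline\theta_\mu^\ve(x,s)\ge\alpha+1/k\}$ and $\M(k,l):=\{\nu\mid \nu(Z_\nu(k))\ge 1/l\}$; then $\M(X)\setminus\mathcal{EM}(\alpha,\ve,s)=\bigcup_{k,l}\M(k,l)$, so it is enough to prove each $\M(k,l)$ is closed. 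Upper-semicontinuity of $x\mapsto\underline\theta_\mu^\ve(x,s)$ gives that $Z_\mu(k)$ is closed, and upper-semicontinuity of $(\mu,x)\mapsto\underline\theta_\mu^\ve(x,s)$ gives that $W_k=\{(\mu,x)\mid \underline\theta_\mu^\ve(x,s)<\alpha+1/k\}$ is open. Given $\mu_n\to\mu$ with $\mu_n\in\M(k,l)$ and assuming for contradiction $\mu\notin\M(k,l)$, one has $\mu(X\setminus Z_\mu(k))>1-1/l$; by tightness of $\mu$ on the Polish space $X$ pick a compact $C\subset X\setminus Z_\mu(k)$ with $\mu(C)>1-1/l$, cover $\{\mu\}\times C$ by finitely many basic open boxes $V_{x_i}\subset W_k$, form $\mathcal I=\bigcap_i\pi_1(V_{x_i})$ and $\mathcal O=\bigcup_i\pi_2(V_{x_i})$; then for $n$ large $\mu_n\in\mathcal I$, so $\underline\theta_{\mu_n}^\ve(y,s)<\alpha+1/k$ for all $y\in\mathcal O$, whence $\mu_n(Z_{\mu_n}(k))\le\mu_n(X\setminus\mathcal O)$, while weak convergence gives $\limsup_n\mu_n(X\setminus\mathcal O)\le\mu(X\setminus\mathcal O)\le\mu(X\setminus C)<1/l$, contradicting $\mu_n\in\M(k,l)$. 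Hence $\M(k,l)$ is closed, $\M(X)\setminus\mathcal{EM}(\alpha,\ve,s)$ is $F_\sigma$, and $\mathcal{EM}(\alpha,\ve,s)$ is $G_\delta$.

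The only genuinely new point compared to Lemma~\ref{Gdelta3} is the verification of the two-sided version of Lemma~\ref{asympt1}, and there the main obstacle is the uniform convergence $\widetilde g^{\ve,n}_{x_m}\to\widetilde g^{\ve,n}_x$: this requires controlling $d(f^kx_m,f^ky)$ versus $d(f^kx,f^ky)$ for $-n\le k\le n$ simultaneously, which is exactly where the hypothesis that $f$ is a uniform homeomorphism (so that each of $f^{-n},\dots,f^{-1},\mathrm{id},f,\dots,f^n$ is uniformly continuous) is indispensable — this is the reason Theorem~\ref{GExpansive} assumes a uniform homeomorphism rather than merely a uniformly continuous bijection. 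Everything else is a routine rerun of the topological bookkeeping already carried out in the proof of Lemma~\ref{Gdelta3} (closedness of $Z_\mu(k)$, openness of $W_k$, tightness, compact exhaustion, the finite subcover argument, and the weak-convergence Portmanteau estimate), so I would present those steps briefly and refer back to Lemma~\ref{Gdelta3} for the details rather than repeating them in full.
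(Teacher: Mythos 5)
Your proposal is correct and follows essentially the same route as the paper, whose proof of Lemma~\ref{Gdeltaexpansive} simply says to repeat the argument of Lemma~\ref{Gdelta3} with $\underline{\gamma}_{\mu}^\ve(x,s)$ replaced by $\underline{\theta}_{\mu}^\ve(x,s)$. You merely make explicit the one point the paper leaves implicit — the two-sided analogue of Lemma~\ref{asympt1}, where uniform continuity of $f^{-1}$ (hence of $f^{-n},\dots,f^{n}$) is needed for the joint continuity of $(\mu,x)\mapsto g_{x,\ve,n}(\mu)$ built from $D_n$ — which is exactly why the uniform homeomorphism hypothesis appears.
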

\begin{proof}
%
The proof follows from the same arguments presented in the proof of Lemma~\ref{Gdelta3}, replacing $\underline{\gamma}_{\mu}^\ve(x,s)$ by $\underline{\theta}_{\mu}^\ve(x,s)$.
\end{proof}

\begin{proof3}
  One has, by Lemma~\ref{equivalexpansive} and relation~\eqref{beta},  the following characterization of $\M_{exp}$:
\begin{eqnarray*}
\M_{exp}&=&\{\mu\in \M(X)\mid \exists \,\ve >0 \mbox{ such that } \liminf _{n \rightarrow \infty} \mu(V[x, n, \ve])=0, ~ \text{ for } \mu\text{-}a.e.~ x \in X\}\\
&=& \bigcup_{k=1}^{\infty} \{\mu\in \M(X)\mid  \liminf _{n \rightarrow \infty} \mu(V[x, n, 1/k])=0, ~ \text{ for } \mu\text{-}a.e.~ x \in X\}\\
&=& \bigcup_{k=1}^{\infty} \bigcap_{l\in \N} \{ \mu\in \M(X)\mid\liminf _{n \rightarrow \infty} \mu(V[x, n, 1/k])\le 1/l, ~ \text{ for } \mu\text{-}a.e.~ x \in X\}\\
&=& \bigcup_{k=1}^{\infty} \bigcap_{l\in \N}  \{ \mu\in \M(X)\mid\mu\textrm{-}\esssup\underline{\omega}_{\mu}^{1/k}(f,x)\le 1/l\}\\
&=& \bigcup_{k=1}^{\infty} \bigcap_{l\in \N} \bigcap_{s\in \N} \{ \mu\in \M(X)\mid\mu\textrm{-}\esssup\underline{\eta}_{\mu}^{1/k}(x,s)\le 1/l\}\\
&=& \bigcup_{k=1}^{\infty} \bigcap_{l\in \N} \bigcap_{s\in \N} \{ \mu\in \M(X)\mid\mu\textrm{-}\esssup\underline{\theta}_{\mu}^{1/k}(x,s)\le 1/l\}.
\end{eqnarray*}
The result follows now from Lemma~\ref{Gdeltaexpansive}.
\end{proof3}

\bibliography{refsentropy}{}
\bibliographystyle{acm}

\end{document}